\documentclass[sn-mathphys-num]{sn-jnl}


\usepackage{graphicx}%
\usepackage{multirow}%
\usepackage{amsmath,amssymb,amsfonts}%
\usepackage{amsthm}%
\usepackage{mathrsfs}%
\usepackage[title]{appendix}%
\usepackage{xcolor}%
\usepackage{textcomp}%
\usepackage{manyfoot}%
\usepackage{booktabs}%
\usepackage{algorithm}%
\usepackage{algorithmicx}%
\usepackage{algpseudocode}%
\usepackage{listings}%

\usepackage{tcolorbox}

\usepackage{tikz} 
\usetikzlibrary{calc,arrows,intersections}
\usetikzlibrary{patterns}



\theoremstyle{thmstyleone}%
\newtheorem{theorem}{Theorem}[section]

\newtheorem{conjecture}[theorem]{Conjecture}
\newtheorem{corollary}[theorem]{Corollary}

\newtheorem{lemma}[theorem]{Lemma}

\newtheorem{proposition}[theorem]{Proposition}

\numberwithin{equation}{section}

\theoremstyle{thmstyletwo}%
\newtheorem{example}{Example}%
\newtheorem{remark}{Remark}%

\theoremstyle{thmstylethree}%
\newtheorem{definition}{Definition}%

\raggedbottom

%

\DeclareMathOperator{\conv}{conv}

\DeclareMathOperator{\EH}{EH}

\DeclareMathOperator{\argmin}{argmin}

\DeclareMathOperator{\rec}{rec}
\DeclareMathOperator{\CL}{CL}
\DeclareMathOperator{\FW}{FW}
\DeclareMathOperator{\SD}{SD}

\newcommand\RR{{\mathbb R}}

\newcommand\0{{\mathbf 0}}
\newcommand\1{{\mathbf 1}}

\newcommand\SetOf[2]{\left\{#1 : #2\right\}}

\newcommand\torus[1]{\RR^{#1}/\RR\1} 

\begin{document}

\title[Breakdown points of Fermat--Weber problems]{Breakdown points of Fermat--Weber problems under gauge distances}


\author*[1]{\fnm{Andrei} \sur{Com\u{a}neci}}\email{andrei.comaneci@yandex.com}

\author[2]{\fnm{Frank} \sur{Plastria}}\email{Frank.Plastria@vub.be}

\affil*[1]{\orgdiv{Institut f\"{u}r Mathematik}, \orgname{Technische Universit\"{a}t Berlin}, \orgaddress{\street{Str.~des~17.~Juni 136}, \city{Berlin}, \postcode{10587}, \country{Germany}}}

\affil[2]{\orgdiv{Business Technology and Operations}, \orgname{Vrije Universiteit Brussel}, \orgaddress{\street{Pleinlaan~2}, \city{Brussel}, \postcode{1050}, \country{Belgium}}}

\abstract{
We compute the robustness of Fermat--Weber points with respect to any  finite gauge.
We show a breakdown point of $1/(1+\sigma)$ where $\sigma$ is the asymmetry measure of the gauge.
We obtain quantitative results indicating how far a corrupted Fermat--Weber point can lie from the true value in terms of the original sample and the size of the corrupted part.
If the distance from the true value depends only on the original sample, then we call the gauge `uniformly robust.'
We show that polyhedral gauges are uniformly robust, but locally strictly convex norms are not, while in dimension 2 any uniform robust gauge is polyhedral.
}

\keywords{Fermat--Weber problem; gauge distance; breakdown point; robust location estimator; elementary hull; contamination locus; majority rules}


\pacs[MSC Classification]{90B85, 46B20, 62G35}

\maketitle

\section{Introduction} \label{sec:intro}

The concept of robustness in statistical analysis originated from the need to address uncertainties in real-world data.
Data sets often contain errors, either due to faulty observations, transcription mistakes, or even deliberate interference.
The fundamental concern is that such imperfections should not excessively skew or distort the conclusions drawn from the data.
Robust statistics aim to develop methods that remain reliable and stable in the presence of such anomalies.
For an overview of these ideas, refer to the books of Huber~\cite{Huber1981} and Staudte \& Sheather~\cite{Staudte+Sheather:1990}.

One of the most intuitive examples of robustness in practice is comparing the behavior of different estimators of central tendency, such as the median and the mean.
The mean is highly sensitive to outliers—values that deviate significantly from the general trend of the data.
A single extreme observation can shift the mean drastically, leading to potentially misleading conclusions.
On the other hand, the median, as a more robust estimator, tends to resist such distortions.
In fact, even if nearly half the data is corrupted by outliers, the median remains largely unaffected, making it a preferable choice in many cases where data integrity is questionable.

Robustness is quantitatively evaluated using concepts such as the \emph{breakdown point}, which defines the proportion of contaminated data an estimator can handle before yielding arbitrarily large errors.
The higher the breakdown point, the more robust the estimator.
For instance, the median has the maximal breakdown point of 0.5, whereas the mean breaks down with a single extreme value.

While the breakdown point is a key criterion in assessing robustness, alternative robustness measures exist, particularly for location problems; see \cite{Carrizosa+Nickel:2003}.
These measures often focus on small errors or perturbations rather than the presence of outliers, so we will not further discuss such cases.

A common method for creating robust estimators is to solve an optimization problem of the form
\begin{equation} \label{eq:M-estim}
    \text{minimize}_{x\in\RR^d}\quad\sum_{a\in A}\rho(x-a),
\end{equation}
where $A$ is the set of data and $\rho$ is a function that measures dissimilarity. The optimal solutions to \eqref{eq:M-estim} are called M-estimators.

In dimension $d=1$ the mean, which is not robust, arises when $\rho(x) = |x|^2$.
To obtain robustness, one must impose further restrictions on $\rho$.

The median, which results from using $\rho(x) = |x|$, is robust because the defining function has linear asymptotic growth, as opposed to the quadratic growth associated with the mean.
Quadratic growth causes extreme outliers to disproportionately influence the displacement of the estimator.
In contrast, the linear growth ensures that gross outliers do not affect the objective significantly more than other points.

Combining the properties of the mean and median, Huber's estimator
with parameter $\delta>0$, obtained with $\rho(x)=\frac{1}{2}x^2$ for $|x|\leq \delta$
and $\rho(x)=\delta|x|-\frac{1}{2}\delta^2$ otherwise, is also robust \cite{Huber1981}. 

Although M-estimators are well-known and extensively studied, the primary focus has been on the symmetric case, where $\rho(-x) = \rho(x)$ for all $x \in \RR^d$.
Most studies assume symmetric underlying distributions, likely due to the desirable properties these estimators exhibit~\cite{Freedman+Diaconis:1982,Huber1981,Mizera:1994}.

However, real-world data is often skewed, so a symmetric estimator can increase bias in the presence of outliers.
In such cases, estimators based on quantiles, which are also M-estimators derived from asymmetric functions~$\rho$, are sometimes preferred.
Asymmetric M-estimators (AM-estimators) offer advantages for certain types of distributions.
For example, \cite{AFGP:2006} demonstrates the benefits of AM-estimators in image processing, \cite{Wang+Lee:2011} applies them to Burr Type III distributions, and \cite{DCC:2018} uses them in microscopy.
Additionally, \cite{Xu+Chen:2018} introduced an asymmetric variant of Huber’s function and tested their method on failure times of steel specimens in fatigue tests.
Our study aims to initiate a better theoretical understanding of the robustness of similar estimators in a multidimensional setting.

Asymmetry of distance measures is well recognized in location theory
\cite{Plastria:2009}, particularly for physical distance when moving through a network with one-way streets \cite{Witzgall:TechnicalReport:1964}, 
for transportation effort on an inclined terrain \cite{Hodgson1987}, 
for time-distance in rush-hour road travel, flight in the presence of wind \cite{Drezner1989}, navigation in the presence of current, interception of a moving target \cite{Cera2007}, 
etc.

When generalizing to multidimensional data, the median corresponds to the Fermat--Weber point in location theory.
This point minimizes the sum of weighted Euclidean distances to a given sample in $\RR^d$.
The robustness of the Fermat–Weber point has been studied extensively, with \cite{bdpoint} demonstrating that its Euclidean version retains a breakdown point of 0.5, indicating it can tolerate corruption in up to half of the data without being significantly affected.

In \cite{Plastria2016}, the second author mistakenly claimed that the same robustness property applies to the Fermat–Weber problem with potentially asymmetric distance measures.
However, as the first author later pointed out, this assertion is incorrect.
Any one-dimensional $b$-quantile can be viewed as a Fermat--Weber point for some suitable distance measure, but its breakdown point is $\min(b,1-b)$; see \cite[\textsection 3.2.1]{Staudte+Sheather:1990} or \cite[Lemma~4.2]{Duembge:2016}.
Upon further review, it was found that the arguments in \cite{Plastria2016} hold only when the distance is derived from a norm, meaning it must be symmetric; see \cite{Plastria2023}.

Interestingly, a generalization of quantiles to the multivariate case was considered in \cite{Koltchinskii-M-quantiles:1997} and \cite{Chaudhuri-spatial-quantile:1996} which gave rise to the notion of $M$-quantiles---they are Fermat--Weber points with respect to skewed norms~\cite{Plastria:1992}.

One line of research in a multidimensional setting focuses on the study of robustness under asymmetric distances in phylogenetics.
The Fermat–Weber problem for phylogenetic trees provides a method for constructing consensus trees~\cite{Bryant:2003}.
The approach using asymmetric distances from~\cite{TropMedian} demonstrated improved properties compared to its symmetric counterpart~\cite{Lin+Yoshida:2018}.
However, its application to the Apicomplexan data set from Kuo~et~al.~\cite{apicomplexa} yielded unsatisfactory results.
The findings of this paper offer an explanation: the data set contains many outliers, and the tropical median from~\cite{TropMedian} has a low breakdown point, making it sensitive to such outliers.

Furthermore, as stated in \cite[Proposition~30]{Comaneci:2024}, the application discussed above connects robustness to majority-rule approaches in consensus problems~\cite{McMorris+Neumann}.
We will briefly revisit the relationship between breakdown points and majority rules in location problems in section~\ref{sec:maj-rule}.

Below we have a brief introduction to the main terms that will be used in the paper.

\subsection{Fermat--Weber problems} \label{subsec:FW}

We consider finite weighted sets $(A,w)$ of points in $\RR^d$, where $A\subset\RR^d$ is a non-empty finite set, and $w:\RR^d\to\RR^+$ a function such that $w_x>0$ if and only if $x\in A$.
In this case, $A$ serves as the support of $w$, i.e. $A=\{x\in\RR^d:w_x\neq 0\}$.
These sets can be viewed as collections of points with assigned multiplicities.

We can combine two weighted sets $(A,w)$ and $(B,v)$ by defining their sum as $(A,w)+(B,v):=(A\cup B,w+v)$, as described in the destination set algebra from \cite[\textsection 3.3]{Plastria2020}.
We say that $(A,w)$ is a subsample of $(B,v)$, denoted by $(A,w)\leq(B,v)$, if $A\subset B$ and $w_a\leq v_a$ for every $a\in A$.
In this situation we can subtract $(A,w)$ from $(B,v)$ to obtain $(B,v)-(A,w):=(C, v-w)$, where $C=\{x\in B:v_x>w_x\}$.

The Fermat--Weber problem for the weighted set $(A,w)$ is the minimization problem:
\begin{equation} \label{eq:FWproblem}
    \text{minimize}_{x\in\RR^d}\quad \sum_{a\in A}w_a d(a,x)
\end{equation}
where $d$ is a distance measure on $\RR^d$.
Any optimal solution to this problem will be called a Fermat--Weber point for the weighted set $(A,w)$.

In this paper, we focus on the case $d(x,y):=\gamma(y-x)$, with $\gamma$ being a gauge as defined in section~\ref{sec:gauge}.
We will also use the notation $\FW_{\gamma}(A,w)$ the set of all Fermat--Weber points for $(A,w)$, i.e. the minima of~\eqref{eq:FWproblem} when $d$~is the gauge distance induced by~$\gamma$.

\subsection{Breakdown points} \label{subsec:breakdown}
The \emph{breakdown point} of an estimator is a common way to measure how robust an estimator is to the presence of outliers.
In general, the breakdown point is a local property, but the estimators coming from \eqref{eq:FWproblem} for gauge distances are equivariant under translations of $A$, so we will see it as a global property.

In our setting, we will define the breakdown point of a location estimator $\xi$ which is applied to finite weighted samples of the
 form $(A,w)$.
We will denote by $w_A$ the sum of all weights of $A$, i.e. $w_A:=\sum_{a\in A}w_a$.
For any positive real $\tau<1$, we say that $(A',w')$ is a $\tau$-corruption of $(A,w)$ if it is obtained by replacing a subsample $(C,{\bar{w}})\leq(A,w)$ with ${\bar{w}}_C\leq\tau\cdot w_A$ by a sample $(C',{\bar{w}}')$ with the same weight.
In other words, $(A',w')=\big((A,w)-(C,{\bar{w}})\big)+(C',{\bar{w}}')$ where ${\bar{w}}'_{C'}={\bar{w}}_C$ and ${\bar{w}}_C/w_A\leq\tau$.

\begin{definition} \label{def:breakdown}
    We say that an estimator $\xi$ is \emph{$\tau$-robust} if for every weighted set $(A,w)$ there exists a bounded set $K_{\tau,A,w}\subseteq\RR^d$ such that for every $\tau$-corruption $(A',w')$ of $(A,w)$ we have $\xi(A',w')\in K_{\tau,A,w}$.

    We define the \emph{breakdown point} of the estimator $\xi$ as the supremum of all non-negative real numbers $\tau<1$ such that $\xi$ is $\tau$-robust. An estimator will be called \emph{robust} if the breakdown point is strictly positive.
\end{definition}

In other words, for any corruption below the breakdown point, the estimate of the corrupted sample is at a bounded distance (depending on the size of the corruption) from the original value.
However, this bound usually grows to infinity as the size of the corruption gets closer to the breakdown point as we will see in section~\ref{sec:unifRobust}.

\begin{remark} \label{rmk:breakdown-nonunique}
    Definition~\ref{def:breakdown} is stated for an estimator $\xi$ which is typically assumed to yield a unique result.
    However, in cases where the estimator is given by an optimization problem, such as \eqref{eq:M-estim} or \eqref{eq:FWproblem}, we can consider $\xi(A',w')$ as the set of all solutions and the condition $\xi(A',w')\in K_{\tau,A,w}$ should be interpreted as the entire solution set being contained within the bounded set $K_{\tau,A,w}$, i.e. $\xi(A',w')\subseteq K_{\tau,A,w}$. This ensures that even if multiple estimates are possible under corruption, all of them remain within a bounded region.
\end{remark}

However, the notion of breakdown point alone does not control how estimates behave as the level of corruption varies below that threshold. To capture a stronger form of robustness, we introduce the concept of \emph{uniform robustness}.

\begin{definition} \label{def:uniformRobust}
    We say that an estimator $\xi$ with breakdown point $\tau^\star>0$ is \emph{uniformly robust} if for every weighted sample $(A,w)$ there is a bounded set $K_{A,w}$ such that for every $\tau<\tau^\star$ and every $\tau$-corruption $(A',w')$ of $(A,w)$ we have $\xi(A',w')\in K_{A,w}$.
\end{definition}

\begin{remark} \label{rmk:uniformRobust-nonunique}
    Similar to the concept of $\tau$-robustness, if the estimator $\xi$ yields a set of values (e.g., the set of Fermat--Weber points), then the uniform robustness property implies that for every $\tau<\tau^\star$ and every $\tau$-corruption $(A',w')$, the entire set $\xi(A',w')$ remains within the bounded set $K_{A,w}$. This boundedness of the solution set is independent of the specific level of corruption $\tau$ (as long as it is below the breakdown point~$\tau^\star$).
\end{remark}

Note that the main difference between definitions~\ref{def:breakdown} and~\ref{def:uniformRobust} is that the bounded set from the latter does not depend on~$\tau$.
For uniformly robust estimators, we have a finite guarantee for the deviation of the corrupted estimate from the original value as long as the corruption remains below the breakdown point.

We want to specify that some sources, such as \cite[Chapter~11]{Huber1981}, define breakdown points in terms of \emph{contaminations} of samples and not \emph{corruptions}, as we presented above.
These two points of view are equivalent, but each have its advantages in theoretical results.
For this reason, we will focus on contaminations starting with section~\ref{sec:unifRobustNorm}.
Formally, for any positive real $\tau<1$, a $\tau$-contamination of a sample $(A,w)$ is a weighted set $(A,w)+(C,{\bar{w}})$ where ${\bar{w}}_C\leq\tau\cdot w_A/(1-\tau)$.
Note that the last condition is equivalent to ${\bar{w}}_C/({\bar{w}}_C+w_A)\leq\tau$, so $(A,w)+(C,{\bar{w}})$ could be seen as a $\tau$-corruption of a set ${(A,w)+(b,{\bar{w}}_C)}$ where $b$ is a point from the original, uncorrupted sample.
In the previous sentence, we used $(b,{\bar{w}}_C)$ as a shorthand notation for ${(\{b\},{\bar{w}}_C\cdot\delta_{\{b\}})}$, to increase the readability of the text, and we will continue using it when the situation occurs.
Conversely, any $\tau$-corruption is a $\tau$-contamination of the uncorrupted part.

\subsection{Outline} \label{subsec:outline}
In this paper we establish more precisely the breakdown point of a Fermat--Weber point as equal to $1/(1+\sigma)$ in case the distance is measured by a gauge with asymmetry measure $\sigma$, as defined in the next section.
This is done in two steps: first, in section~\ref{sec:upperBound}, we show that this estimator is not $\tau$-robust for any $\tau>1/(1+\sigma)$ and, in section~\ref{sec:lowerBound}, that it is $\tau$-robust below this limit.
This will include an explicit construction of a bounded set $K_{\tau,A,w}$ as defined above.
These results are formalized in Theorems~\ref{th:upperBound} and~\ref{th:lowerBound}, with some upper bounds also following from Remark~\ref{rmk:quantitativeRobustness}.
The breakdown point might not be enough to understand the effect of outliers in practical settings, as quoted in \cite{Diakonikolas+Kane:2023}:
\begin{quote}
    if one cares about the size of the errors that one incurs (more precisely than simply knowing whether or not they are finite), the breakdown point will be an insufficient measure of robustness.
\end{quote}
Therefore, specific constructions of $K_{\tau,A,w}$ give us more certainty about our estimator, which are deduced in the proofs of the aforementioned results.

In \cite{Plastria2016} the question was left open whether the same bounded set $K$ may contain all Fermat--Weber points for samples corrupted by any $\tau$ below the breakdown point, a property we named uniform robustness. 
We show in section~\ref{sec:unifRobust} that this does not hold in many cases, including the Euclidean distance, but that any polyhedral distance has uniform robustness for its Fermat--Weber points.
Thus, we guarantee a consistent performance in robustness for polyhedral Fermat--Weber points across varying contamination levels, making them more predictable in worst-case or uncertain conditions.
What is more, the displacement of the estimator of the corrupted sample is easier to analyze, especially when relating subsequently to majority rules in location problems.

In section~\ref{sec:unifRobustNorm} we concentrate on the uniform robustness for norms, i.e. the symmetric case.
In particular, we show that this is related to the boundedness of the elementary hull from Definition~\ref{def:EH}.
Using this characterization, we show that locally strictly convex norms are not uniformly robust.
Section~\ref{sec:CL} focuses on the asymmetric case, showing that the situation is then more complex.

In the last two sections, a different view on $\tau$-robustness provides an easier treatment: the \emph{contamination} approach to breakdown points of finite samples; see \cite[Chapter~11]{Huber1981}.
In particular, we introduce the set of all possible contaminations under a given threshold and present a few topological properties.
The latter are useful for computing contamination loci, as we will see in Example~\ref{eg:quantileCL}.

Section~\ref{sec:maj-rule} will be more informal, but discusses how some majority rules from location problems relate to robustness of location to outlier observations.
In particular, we discuss how this observation, together with the study of elementary hulls, was used in \cite{Comaneci:2024} to relate specific location problems to majority-rule consensus methods as defined in \cite{Margush+McMorris,McMorris+Neumann}.

The paper terminates with some final remarks.


\section{Gauges and skewness} \label{sec:gauge}
We always consider a finite gauge $\gamma$ on $\RR^d$ with finite dual gauge, i.e. $\gamma$ defines (and is defined by) a unit ball $B_{\gamma}:=\SetOf{x\in\RR^d}{\gamma(x)\leq 1}$ that is
convex, compact and has the origin $\0$ in its interior.   
For the basic properties and subdifferentials of a gauge $\gamma$ and its dual gauge $\gamma^\circ$, see, e.g. \cite{FundConvAnal,Plastria:2009}.

In particular, we will need the following.
For all $x,y\in\RR^d$ and $\lambda> 0$ we have
\begin{itemize}
    \item $\gamma(x)\geq 0$;
    \item $\gamma(x)=0 \Leftrightarrow x=\0$;
    \item $\gamma(\lambda x)=\lambda\gamma(x)$;
    \item $\gamma(x+y)\leq\gamma(x)+\gamma(y)$.
\end{itemize}
The dual gauge is defined by $\gamma^\circ(p):=\max_{x\in\RR^d}\SetOf{\langle p,x \rangle}{\gamma(x)=1}$, implying that for any $p,x\in\RR^d$
\[\langle p,x \rangle \leq \gamma^\circ(p)\gamma(x)\text{ (generalized Cauchy--Schwarz)}\] and $\gamma^{\circ\circ}=\gamma$.
The boundary of $\gamma^\circ$'s unit ball is $\partial B_{\gamma^\circ}=\SetOf{p\in\RR^d}{\gamma^\circ(p)=1}$.
The subdifferential of $\gamma$ at a point $x\neq\0$ is $\partial \gamma(x)=\SetOf{p\in\partial B_{\gamma^\circ}}{\langle p,x \rangle=\gamma(x)}$ and satisfies $\partial\gamma(\lambda x)=\partial\gamma(x)$
for all $\lambda>0$. Note that $\gamma^\circ(p)=1$ for all $p\in\partial\gamma(x)$ as soon as $x\neq\0$, while for $x,p\in\RR^d$ with 
$\gamma(x)=\gamma^\circ(p)=1$ we have $p\in\partial \gamma(x)$ if and only if $x\in\partial \gamma^\circ(p)$.
Hence, for non-zero vectors $p,x$ we have 
\begin{equation}\label{eq:dualsubdiff}
    \frac{p}{\gamma^\circ(p)}\in\partial\gamma(x) 
    \iff \frac{x}{\gamma(x)}\in\partial\gamma^\circ(p)
\end{equation}
The subdifferential of a gauge at the origin equals the unit ball of the dual norm, i.e. $\partial\gamma(\0)=B_{\gamma^\circ}$ and $\partial\gamma^\circ(\0)=B_{\gamma}$.

We will denote by $\sigma$ the \emph{skewness} of the gauge $\gamma$ as defined in \cite{Plastria:1992,Plastria:2009}, i.e.
\[\sigma:=\max_{x\neq\mathbf 0}\frac{\gamma(x)}{\gamma(-x)}=\max_{-x\in\partial B_{\gamma}}{\gamma(x)}=\max_{x\in- B_{\gamma}}{\gamma(x)}.\]

We call a \emph{skewness direction} of $\gamma$ 
any $\gamma$-unit direction $v$ on which the gauge $\gamma$ reaches it skewness, i.e. $v\in\RR^d$ such that $\gamma(v)=\sigma\cdot\gamma(-v)=1$.
The set $\SD_\gamma$ of skewness directions of $\gamma$ is nonempty by the continuity of $\gamma$ on the 
 compact boundary  $\partial B_{\gamma}$. Clearly $\sigma\geq 1$ and the lower bound 1 is reached if and only if $\gamma$ is symmetric, i.e. a norm,
 in which case $\SD_\gamma=\partial B_{\gamma}$. 
 
 Note that $-\sigma v\in \partial B_{\gamma}\subset B_{\gamma}$ for any $v\in \SD_\gamma$.
 Since the maximum of the convex function $\gamma$ on the closed convex set $- B_{\gamma}$ is reached at some extreme point $u\in \partial(-B_{\gamma})=-\partial B_{\gamma}$, there always exists a $u\in \SD_\gamma$ with $-\sigma u$ an extreme point of $B_{\gamma}$.
 
Figure~\ref{fig:ballsSD} displays the unit balls of three gauges in $\RR^2$, all of skewness 2, and their respective skewness directions $v$ as bold arrows as well as their corresponding $-\sigma v$ by dotted line segments; the following cases are shown:
\begin{description}
    \item[(a)] is a decentered circle with singleton $\SD_\gamma$;
    \item[(b)] is a centered equilateral triangle having three skewness directions; and
    \item[(c)] is a quadrilateral that shows that $\SD_\gamma$ may be non-discrete (here it contains a full interval of $\partial B_{\gamma}$ shown in bold).
\end{description}

\begin{figure}
    \centering
    \begin{tikzpicture}[scale=0.1];
    \begin{scope}[shift={(-40,0)}]
    \coordinate (o) at (0,0); \draw[fill] (o)  circle(0.2) node[below] {$\0$};
   \draw (5,0) circle[radius=15];
    \draw[very thick,->] (o) -- (-10,0); 
    \draw[dotted] (o)--(20,0);   
    
    \draw (5,-18) node {(a)};
  \end{scope}
   \begin{scope}
    \coordinate (top) at (0,20);     
    \path (-30:20) coordinate (right);
    \path (210:20) coordinate (left);
    \draw (top) -- (right) -- (left) -- cycle;
     \coordinate (o) at (0,0); \draw[fill] (o)  circle(0.2) node[below left] {$\0$};
   
    \draw[very thick,->] (o) -- ($(top)!0.5!(right)$);
    \draw[very thick,->] (o) -- ($(left)!0.5!(right)$);
    \draw[very thick,->] (o) -- ($(top)!0.5!(left)$);
    \draw[dotted] (o)--(top) (o)--(right) (o)--(left) ;   
    
    \draw (0,-18) node {(b)};
  \end{scope}
   \begin{scope}[shift={(35,0)}]
    \coordinate (o) at (0,0); \draw[fill] (o)  circle(0.2) node[above] {$\0$};
    \path (-10,20) coordinate (rt);
    \path (-10,-10) coordinate (rb);
    \path (20,15) coordinate (lt);
    \path (20,-10) coordinate (lb);
    \draw (rt) -- (rb) -- (lb) -- (lt) -- cycle;

\path[name path=dlt] (lt) -- ($(lt)!2!(o)$);
\path[ name path=dlb] (lb) -- ($(lb)!2!(o)$);
\path[name path=right] (rt) -- (rb);
\path[name intersections={of=right and dlt} ] (intersection-1) coordinate (olt);
\path[name intersections={of=right and dlb} ] (intersection-1) coordinate (olb);

\path[ name path=drt] (rt) -- ($(rt)!2!(o)$);
\path[name path=bot] (lb) -- (rb);
\path[name intersections={of=bot and drt} ] (intersection-1) coordinate (ort);
    \draw[very thick,->] (o) -- (ort); 
    \draw[dotted] (rt)--(o);   

    \draw[very thick,->] (o) -- (olt); 
    \draw[very thick,->]  (o) -- (olb) ;
    \draw[very thick,->]  (o) -- ($(rt)!(o)!(rb)$) ;
    \draw[ultra thick, opacity=0.7]  (olt) -- (olb) ;
    \draw[dotted] (lt)--(o)--(lb);   
    
    \draw (5,-18) node {(c)};
  \end{scope}
  \end{tikzpicture}    
  \caption{Three unit balls of gauges of skewness 2 and their skewness directions (bold arrows or bold segments; see text for details)}
    \label{fig:ballsSD}
\end{figure}

\begin{example}
An important class of gauges comprises skewed norms, introduced in~\cite{Plastria:1992}, which encompass various asymmetric distances encountered when flying in wind, navigating in current, or transporting on inclined terrain and also when intercepting a moving target \cite{Cera2007}.
These are defined by $\gamma(x)=\nu(x)-\langle x,u\rangle$ where $\nu$ is some norm on $\RR^d$ and $u$ is some vector in $\RR^d$ satisfying $\nu^\circ(u)<1$. The skewness of this gauge was obtained in~\cite{Plastria:1992} as 
\[\sigma=\frac{1+\nu^\circ(u)}{1-\nu^\circ(u)}.\]
Skewed norms are the duals of gauges with centrally symmetric unit balls around the point $-u$.
It is not hard to see that for such a skewed norm $\gamma$ 
one has
 \[\SD_\gamma=\SetOf{\frac{-q}{1+\nu^\circ(u)}}{q\in \partial\nu^\circ(u)}.\]  
\end{example}

\begin{lemma} \label{lem:subdiffskew}  For any gauge $\gamma$ with skewness $\sigma$, its dual gauge $\gamma^\circ$ also has skewness $\sigma$ and we have for all $v\in\RR^d$:
\begin{equation}\label{eq:inSDgamma}
\mbox{ $v\in \SD_\gamma$ if and only if  } -\frac{1}{\sigma}\partial\gamma(v)\subset\partial\gamma(-v).
\end{equation}

Moreover, 
\begin{equation}\label{eq:SDgammadual}
\SD_{\gamma^\circ} = -\frac{ 1}{\sigma}\bigcup_{v\in\SD_\gamma}\partial\gamma(v).
\end{equation}
\end{lemma}

\begin{proof}
Equality of skewness between  $\gamma$ and  $\gamma^\circ$ was shown in \cite[Corollary~11]{Plastria:2009}.

To show (\ref{eq:inSDgamma}) consider first
any $v\in \SD_\gamma$, and let $r\in\partial\gamma(v)$, which means that
\begin{description}
    \item[(a)] $\langle r,v\rangle=\gamma(v)$, and
    \item[(b)] $\gamma^\circ(r)=1$. 
\end{description}
We also recall that $\gamma(v)=1$ since $v$ is a skewness direction. Thus, $\langle r,v\rangle=1$. 

We have to show that $-\frac{ r}{\sigma}\in \partial \gamma(-v)$. To this end it suffices to show that
\begin{description}
    \item[(A)] $\langle-\frac{ r}{\sigma},-v\rangle=\gamma(-v)$, and
    \item[(B)] $\gamma^\circ(-\frac{ r}{\sigma})= 1$.
\end{description}

It follows from (a) that $\langle-\frac{ r}{\sigma},-v\rangle=\frac{ 1}{\sigma}\langle r,v\rangle=\frac{ 1}{\sigma}=\gamma(-v)$, hence we have (A).

By the generalized Cauchy--Schwarz inequality we also have \[\gamma(-v)=\left\langle-\frac{ r}{\sigma},-v\right\rangle\leq \gamma^\circ\left(-\frac{ r}{\sigma}\right)\gamma(-v),\] so $\gamma^\circ(-\frac{ r}{\sigma})\geq 1$.
From (b) and the fact that $\gamma^\circ$ has skewness $\sigma$, 
we find that \[\gamma^\circ\left(-\frac{ r}{\sigma}\right)\leq \sigma \gamma^\circ\left(\frac{ r}{\sigma}\right)=\gamma^\circ(r)=1,\] yielding (B). 

Conversely, for the 'if' part of (\ref{eq:inSDgamma}), consider any $r\in\partial\gamma(v)$ (which must exist) and thus, by hypothesis,  $-\frac{r}{\sigma}\in\partial\gamma(-v)$.  Then $ 1=\gamma(v)=\langle r,v\rangle=\sigma \langle -\frac{ r}{\sigma},-v\rangle=\sigma\gamma(-v)$ and hence $v\in\SD_\gamma$.

For (\ref{eq:SDgammadual}) we start by   showing the inclusion 
\begin{equation}\label{eq:SDgammadualcontains}
\SD_{\gamma^\circ} \supset -\frac{ 1}{\sigma}\bigcup_{v\in\SD_\gamma}\partial\gamma(v).
\end{equation}
Let $v\in \SD_\gamma$ and  $r\in\partial\gamma(v)$. Above we  showed in (B) that $\gamma^\circ(-\frac{ r}{\sigma})=1$.
Next, we have $\sigma\gamma^\circ(\frac{ r}{\sigma})=\gamma^\circ(r)=1$.
Since  $\gamma^\circ$ has skewness $\sigma$ this means that 
$-\frac{ r}{\sigma}\in\SD_{\gamma^\circ}$, yielding the sought inclusion.

To obtain the inverse inclusion in  (\ref{eq:SDgammadual}) we have to show that any $s\in\SD_{\gamma^\circ}$
is of the form $-\frac{ r}{\sigma}$ with $r\in\partial\gamma(v)$ for some 
$v\in \SD_\gamma$. To this end note first that $\gamma^\circ(-s)=\frac{1}{\sigma}$, so calling $r:=-\sigma s$ we have $\gamma^\circ(r)=1$. Secondly take any $w\in\partial\gamma^\circ(s)$ and define 
$v:=-\frac{ w}{\sigma}$. 
Note now that   (\ref{eq:SDgammadualcontains}) may be applied to the gauge 
$\gamma^\circ$, which, using $s\in\SD_{\gamma^\circ}$ and $w\in\partial\gamma^\circ(s)$, implies  we have $v=-\frac{ w}{\sigma}\in\SD_{\gamma^{\circ\circ}}=\SD_{\gamma}$.
Applying also   (\ref{eq:inSDgamma}) to the gauge 
$\gamma^\circ$, we see that $s\in\SD_{\gamma^\circ}$ and $w\in\partial\gamma^\circ(s)$ implies , 
$v=-\frac{ w}{\sigma}\in\partial\gamma^\circ(-s)=\partial\gamma^\circ(r)$, 
or, equivalently (since $\gamma^\circ(r)=1$), $r\in\partial\gamma(v)$.
Noting, finally, that $-\frac{ r}{\sigma}=s$, the proof is complete.
\end{proof}

\begin{remark}
The inclusion in (\ref{eq:inSDgamma}) of Lemma~\ref{lem:subdiffskew} might be strict.
Consider, for example, a gauge $\gamma$ with as unit ball $B_{\gamma}$ a simplex in $\RR^d$  ($d\geq 2$) having $\0$ in its interior.
As previously observed, the skewness is realized at some \( v \) where \( -\sigma v \) is an extreme point in \( B_\gamma \). Thus, \( -\sigma v \) is a vertex of \( B_\gamma \), and the subdifferential of \( \gamma \) at \( -\sigma v \) (and thus at~\( -v \)) is \((d - 1)\)-dimensional. In contrast, \( v \) lies in the relative interior of the opposite simplex facet, where the subdifferential of \( \gamma \) is a singleton.
\end{remark}

\begin{tcolorbox}[width=\linewidth, sharp corners=all, colback=white!80!black, boxrule=0pt]
In all what follows we assume that $\gamma$ is a finite gauge with skewness $\sigma$ and  $v\in \SD_\gamma$. 
\end{tcolorbox}

\section{The upper bound on the breakdown point} \label{sec:upperBound}

Consider the Fermat--Weber problem with distance derived from $\gamma$: find $x$ minimizing
\begin{equation} \label{FWproblem}
    x\mapsto\sum_{a\in A}w_{a}\gamma(x-a)
\end{equation}
where all $w_{a}>0$.
In other words, we study the Fermat--Weber problem on a weighted set $(A,w)$ where $w:A\to\RR_{>0}$ is the function assigning weights to the points of $A$.

The following corollary of Lemma~\ref{lem:subdiffskew} shows that the  Fermat--Weber problem on a line with highest skewness reduces to the one-dimensional case (quantiles).

\begin{lemma} \label{cor:FWskewline}
    If all sample points $A$ of a Fermat--Weber problem lie on a line whose direction is parallel to some $v\in\SD_\gamma$, then there is a Fermat--Weber point on that line.
\end{lemma}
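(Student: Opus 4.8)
The plan is to fix coordinates so that the given line is $\RR v$, write each sample point as $a=s_av$ with $s_a\in\RR$ (translating if necessary, since the problem is translation-equivariant), and prove that a minimizer of the problem restricted to this line is already a global Fermat--Weber point. Restricting the objective $f(x):=\sum_{a\in A}w_a\gamma(x-a)$ of \eqref{FWproblem} to $\RR v$ and using $\gamma(v)=1$, $\gamma(-v)=1/\sigma$ gives
\[
  g(t):=f(tv)=\sum_{a\in A}w_a\gamma\bigl((t-s_a)v\bigr)=\sum_{a\in A}w_a\Bigl(\max(t-s_a,0)+\tfrac1\sigma\max(s_a-t,0)\Bigr),
\]
a convex, piecewise-linear, coercive function of $t$ whose one-sided slopes at $\pm\infty$ are $1$ and $-1/\sigma$. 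This is precisely a weighted-quantile objective, which is the reduction to the one-dimensional case asserted by the statement, so $g$ attains its minimum at some $t^\ast$, and the optimality condition $0\in\partial g(t^\ast)$ unwinds to the existence of numbers $\mu_a$ with $\mu_a=1$ when $t^\ast>s_a$, $\mu_a=-1/\sigma$ when $t^\ast<s_a$, $\mu_a\in[-1/\sigma,1]$ when $t^\ast=s_a$, and $\sum_{a\in A}w_a\mu_a=0$.

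The second step is to lift this to the global optimality certificate $0\in\partial f(t^\ast v)=\sum_{a\in A}w_a\,\partial\gamma(t^\ast v-a)$, where the sum rule applies because every $\gamma(\cdot-a)$ is finite and continuous. Fix any $r\in\partial\gamma(v)$. Three elementary facts do the work: (i) by homogeneity of the subdifferential, $\partial\gamma(t^\ast v-a)=\partial\gamma(v)$ when $t^\ast>s_a$ and $=\partial\gamma(-v)$ when $t^\ast<s_a$; (ii) by Lemma~\ref{lem:subdiffskew}, since $v\in\SD_\gamma$, we have $-r/\sigma\in\partial\gamma(-v)$; (iii) $\partial\gamma(\0)=B_{\gamma^\circ}$ (immediate from the definitions: $p\in\partial\gamma(\0)$ iff $\langle p,y\rangle\le\gamma(y)$ for all $y$), which is convex and contains both $r$ and $-r/\sigma$, hence contains $\mu r$ for every $\mu\in[-1/\sigma,1]$. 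Setting $q_a:=\mu_ar$, fact (i) together with $r\in\partial\gamma(v)$ handles the indices $t^\ast>s_a$, facts (i) and (ii) handle $t^\ast<s_a$, and fact (iii) handles $t^\ast=s_a$; thus $q_a\in\partial\gamma(t^\ast v-a)$ for every $a$, while $\sum_aw_aq_a=\bigl(\sum_aw_a\mu_a\bigr)r=0$. Hence $t^\ast v$ is a Fermat--Weber point lying on the line.

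The crux of the argument, and the only place where $v\in\SD_\gamma$ is used rather than an arbitrary direction, is Lemma~\ref{lem:subdiffskew}: the inclusion $-\tfrac1\sigma\partial\gamma(v)\subseteq\partial\gamma(-v)$ is exactly what allows a single dual vector $r$ to certify optimality both above $t^\ast$ (multiplier $1$) and below $t^\ast$ (multiplier $-1/\sigma$), matching the two slopes of $g$. The remaining technicalities are minor: the degenerate index set $\{a:t^\ast=s_a\}$ is absorbed by convexity of $B_{\gamma^\circ}=\partial\gamma(\0)$, and the possibility that several sample points coincide is harmless.

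For robustness I would also record a subgradient-free alternative: show directly that for every $x\in\RR^d$ there is a point $tv$ with $f(tv)\le f(x)$. Writing $\rho_a:=\gamma(x-a)$, one has $\{t:\gamma(tv-a)\le\rho_a\}=[\,s_a-\sigma\rho_a,\,s_a+\rho_a\,]$, and these intervals pairwise intersect because, whenever $s_a\ge s_b$, $s_a-s_b=\gamma\bigl((s_a-s_b)v\bigr)\le\gamma(a-x)+\gamma(x-b)\le\sigma\rho_a+\rho_b$, using the triangle inequality and the bound $\gamma(a-x)\le\sigma\gamma(x-a)$ from the definition of $\sigma$. By Helly's theorem on the line the intervals share a point $t$, whence $f(tv)=\sum_aw_a\gamma(tv-a)\le\sum_aw_a\rho_a=f(x)$; combined with the coercivity of $g$, this again produces a Fermat--Weber point on the line.
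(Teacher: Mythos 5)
Your primary argument is correct and is essentially the paper's own proof: both restrict to the line, locate the one-dimensional minimizer (the paper pins it down explicitly as the $\lceil n/(1+\sigma)\rceil$-th point in the unweighted case, you read it off from $0\in\partial g(t^\ast)$ in general), and then certify global optimality by assembling a zero subgradient of the form $\sum_a w_a\mu_a r$ with a single $r\in\partial\gamma(v)$, using Lemma~\ref{lem:subdiffskew} for the indices below $t^\ast$ and the convexity of $\partial\gamma(\0)=B_{\gamma^\circ}$ (which contains the segment from $-r/\sigma$ to $r$) for the coincident indices --- exactly the paper's ``$\alpha r\in B_{\gamma^\circ}$'' step. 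Your version has the minor merit of treating arbitrary weights and repeated points explicitly, which the paper waves off with ``a similar proof can be done for the general case.'' Your appended Helly argument is a genuinely different and correct route: the interval computation $\{t:\gamma(tv-a)\le\rho_a\}=[s_a-\sigma\rho_a,\,s_a+\rho_a]$ and the pairwise-intersection bound $s_a-s_b\le\sigma\rho_a+\rho_b$ both check out, and this is precisely the kind of elementary, convex-analysis-free ``ideal point'' proof the paper's subsequent remark alludes to; it buys a proof that only uses the triangle inequality and the definition of skewness, at the cost of not exhibiting which point on the line is optimal.
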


\begin{proof}
     For simplicity, we will assume that all weights are equal; a similar proof can be done for the general case.
    After a translation, we can also assume that $A=\{\tau_1 v,\dots,\tau_n v\}$ with ${\tau_1<\dots<\tau_n}$.
    Setting $\ell=\lceil n/(1+\sigma)\rceil$, we show that $\tau_\ell v$ is a Fermat--Weber point.
    
 For the $\ell-1$ values $i<\ell$ the term $\tau_\ell-\tau_i$ is positive, so $\partial\gamma\left((\tau_\ell-\tau_i)v\right)=\partial \gamma(v)$, while for the $n-\ell$ values $i>\ell$ it is negative, so $\partial\gamma\left((\tau_\ell-\tau_i)v\right)=\partial \gamma(-v)$.
     Therefore, the subdifferential of $\sum_{a\in A}\gamma(\cdot-a)$ at $\tau_\ell v$ equals
    \begin{equation} \label{eq:subdiff_incl}
        \sum_{i=1}^n\partial\gamma\left((\tau_\ell-\tau_i)v\right)= (\ell-1)\partial \gamma(v)+\partial\gamma(\mathbf 0)+{(n-\ell)}\partial\gamma(-v).
    \end{equation}

    To clarify, this equality holds because for a convex set $C$, we have $kC=\underbrace{C + \cdots + C}_{k\text{ times}}$, so multiplying the subdifferential by a scalar is equivalent to summing it $k$ times; see \cite[Theorem~3.2]{Rockafellar:70}.

    Consider then any $r\in\partial\gamma( v)$. We know from Lemma~\ref{lem:subdiffskew} that $-\frac{1}{\sigma} r\in\partial\gamma(-v)$, so $r$ and $-\frac{1}{\sigma} r$ both belong to the (convex) unit dual ball $B_{\gamma^\circ}$.
    The value $\alpha=\frac{n-\ell}{\sigma}-\ell+1$ satisfies $-1/\sigma<\alpha\leq 1$, and therefore  $\alpha r\in B_{\gamma^\circ}=\partial\gamma(\mathbf 0)$.

    But ${(\ell-1) r}+\alpha r+{(n-\ell)}\left(-\frac{1}{\sigma} r\right)=\mathbf 0$, and 
 equation (\ref{eq:subdiff_incl})   shows  that this zero-vector lies in the subdifferential of $\sum_{a\in A}\gamma(\cdot-a)$ at $\tau_\ell v$.
Since the objective function of the Fermat--Weber problem is convex, this means that $\tau_\ell v$ minimizes it, so it is a Fermat--Weber point.
\end{proof}

\begin{remark}
Lemma~\ref{cor:FWskewline} may also be proven using a lengthier but quite elementary ideal point proof (without any convex analysis) similar to section~3.7 in \cite{Plastria2020}.
\end{remark}

\begin{remark}
    In the asymmetric case, the solutions of the Fermat--Weber problem with $A$ contained in a line might lie outside the line.
    For example, the case of a simplicial gauge is analyzed in \cite{TropMedian} and it is shown that the Fermat--Weber points lie in the tropical convex hull of the input points.
    Actually, the tropical convex hull coincides with the elementary hull for a simplicial gauge as will be defined in section~\ref{sec:unifRobust}, Definition~\ref{def:EH}.
    Unless the points lie along a skewness direction, most Fermat--Weber points will generally lie outside the line; cf.~\cite[Example~8]{TropMedian}.
The above result shows that the skewness directions are special.
In the symmetric case, all unit directions are skewness directions, so the result holds for every line.
\end{remark}

To show that the upper bound on the breakdown point is $1/(1+\sigma)$, we will move a part of the sample, whose total weight is larger than the aforementioned value, in the direction $-v$.
We will then show that the Fermat--Weber point of the new weighted set can be arbitrarily far from the original location.

We will split the set $A$ in two: the set $C$ of corrupted points and the set $D$ of uncorrupted points. Considering also the weights, we have $(A,w)=(D,{\bar{w}}')+(C,{\bar{w}})$.
The points of $C$ will be moved in the direction of $-v$, so they are replaced by $C-Mv$ for some large $M>0$, while the weights are preserved.
Hence, the objective function
\[f_M(x)=\sum_{d\in D}{\bar{w}}'_d\gamma(x-d)+\sum_{c\in C}{\bar{w}}_c\gamma(x-c+Mv)\]
depends on $M$.
We can split it as $f_M(x)=g(x)+h_M(x)$ where
\[g(x):=\sum_{d\in D}{\bar{w}}'_d\gamma(x-d) \quad\text{and}\quad h_M(x):=\sum_{c\in C}{\bar{w}}_c\gamma(x-c+Mv).\]
As introduced before, for a subset $S$ of $A$, we denote by $w_S$ the sum $\sum_{s\in S}w_s$.

We will prove that $\mathbf 0\notin\partial g(u)+\partial h_M(u)$ for every $u$ in  some bounded set $U$ and $M$ sufficiently large, 
as soon as the weight ratio ${\bar{w}}_C/w_A$ is larger than $1/(1+\sigma)$.
For this we need two lemmas bounding the  dual norm $\gamma^\circ$ of subgradients in $\partial g$ and $-\partial h_M$.

\begin{lemma} \label{lem:part1}
    For every $x\in\RR^d$ and $p\in\partial g(x)$, we have $\gamma^\circ(p)\leq {\bar{w}}'_D$.
\end{lemma}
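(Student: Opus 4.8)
The plan is to reduce the claim to two standard facts about gauges and convex functions: the subdifferential sum rule, and the observation that every subgradient of $\gamma$ lies in the dual unit ball $B_{\gamma^\circ}$.

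First I would apply the sum rule for subdifferentials. Each summand $x\mapsto w_d\gamma(x-d)$ is finite and convex on all of $\RR^d$, so $\partial g(x)=\sum_{d\in D}w_d\,\partial\gamma(x-d)$, the (Minkowski) sum of the scaled subdifferentials. Consequently every $p\in\partial g(x)$ decomposes as $p=\sum_{d\in D}w_d q_d$ with $q_d\in\partial\gamma(x-d)$ for each $d\in D$.

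Second, I would use the description of $\partial\gamma$ recalled in Section~\ref{sec:gauge}: for $y\neq\0$ we have $\partial\gamma(y)=\SetOf{q\in\partial B_{\gamma^\circ}}{\langle q,y\rangle=\gamma(y)}\subseteq B_{\gamma^\circ}$, while $\partial\gamma(\0)=B_{\gamma^\circ}$. Hence in either case $\gamma^\circ(q_d)\leq 1$. Now, since $\gamma^\circ$ is itself a gauge, it is positively homogeneous and subadditive, so
\[
\gamma^\circ(p)=\gamma^\circ\!\left(\sum_{d\in D}w_d q_d\right)\leq\sum_{d\in D}w_d\,\gamma^\circ(q_d)\leq\sum_{d\in D}w_d=w_D,
\]
which is exactly the asserted bound.

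There is no serious obstacle here; the only two points requiring a moment of care are (i) the validity of the subdifferential sum rule, which is unconditional since all the functions involved are finite-valued (equivalently, continuous) convex functions on $\RR^d$ with no constraint-qualification issue, and (ii) the behaviour at a point $x=d$, where $\partial\gamma(x-d)$ jumps to the full ball $B_{\gamma^\circ}$ rather than to its boundary — but since $B_{\gamma^\circ}=\SetOf{q}{\gamma^\circ(q)\leq 1}$, the estimate $\gamma^\circ(q_d)\leq 1$ still holds there. One may also note the bound is sharp: equality is attained as soon as the chosen subgradients $q_d$ can all be taken equal (e.g. when $x$ is chosen in a suitable half-space far from $D$), so $w_D$ cannot be improved in general.
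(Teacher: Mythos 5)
Your proof is correct and follows essentially the same route as the paper: decompose $p=\sum_{d\in D}w_d p_d$ via the subdifferential sum rule, bound each $\gamma^\circ(p_d)\leq 1$ since subgradients of a gauge lie in the dual unit ball, and conclude by sublinearity of $\gamma^\circ$. Your extra remarks on the case $x=d$ and on sharpness are fine but not needed.
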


\begin{proof}
    The point $p$ is of the form $p=\sum_{d\in D}{\bar{w}}'_d p_d$ with $p_d\in\partial\gamma(x-d)$.
    Then $\gamma^\circ(p_d)\leq 
    1$ for every $d\in D$ and
    \[\gamma^\circ(p)=\gamma^\circ\left(\sum_{d\in D}{\bar{w}}'_d p_d\right)\leq\sum_{d\in D}{\bar{w}}'_d\gamma^\circ(p_d)\leq 
    \sum_{d\in D}{\bar{w}}'_d\]
    from the sublinearity of $\gamma^\circ$.
\end{proof}

\begin{lemma} \label{lem:part2}
    Let $U\subset\RR^d$ be a bounded set and $\tau>0$ a positive constant.
    Then there exists $M_{U,\tau}>0$ such that $\gamma^\circ(-q)>\sigma {\bar{w}}_C-\tau$ for every $M>M_{U,\tau}$ and $q\in\partial h_M(U):=\bigcup_{u\in U}\partial h_M(u)$.
\end{lemma}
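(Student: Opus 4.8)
The plan is to exploit that $v$ is a skewness direction. Since $\gamma(-v)=1/\sigma$, the vector $-\sigma v$ lies on $\partial B_\gamma$ (indeed $\gamma(-\sigma v)=\sigma\gamma(-v)=1$), so by the very definition of the dual gauge we have, for \emph{every} $q$,
\[
\gamma^\circ(-q)\ \ge\ \langle -q,\,-\sigma v\rangle\ =\ \sigma\,\langle q,v\rangle .
\]
Hence it suffices to show that $\langle q,v\rangle$ is as close to $w_C$ as we wish, uniformly over $u\in U$, once $M$ is large. Heuristically this is exactly the phenomenon behind Lemma~\ref{lem:subdiffskew} and the discussion preceding this lemma: when $M$ is large the point $u-c+Mv$ points almost in the direction $v$, so its $\gamma$-subgradients $q_c$ are almost subgradients at $v$, which are the ones of largest dual gauge in the $-v$ direction.

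Concretely, I would first write a subgradient $q\in\partial h_M(u)$ in the form $q=\sum_{c\in C}w_c q_c$ with $q_c\in\partial\gamma(u-c+Mv)$, so that $\langle q_c,\,u-c+Mv\rangle=\gamma(u-c+Mv)$ and $\gamma^\circ(q_c)=1$. Set
\[
R:=\sup\bigl\{\max(\gamma(u-c),\gamma(c-u))\ :\ u\in U,\ c\in C\bigr\},
\]
which is finite because $U$ and $C$ are bounded and $\gamma$ is continuous. The triangle inequality for $\gamma$ gives $\gamma(u-c+Mv)\ge M\gamma(v)-\gamma(c-u)\ge M-R$, while the generalized Cauchy--Schwarz inequality gives $|\langle q_c,u-c\rangle|\le \gamma^\circ(q_c)\max(\gamma(u-c),\gamma(c-u))\le R$. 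Subtracting, $M\langle q_c,v\rangle\ge M-2R$, i.e. $\langle q_c,v\rangle\ge 1-2R/M$. Summing with the weights $w_c$ yields $\langle q,v\rangle\ge w_C\bigl(1-2R/M\bigr)$, hence
\[
\gamma^\circ(-q)\ \ge\ \sigma\,\langle q,v\rangle\ \ge\ \sigma w_C-\frac{2\sigma w_C R}{M}.
\]
Choosing $M_{U,\tau}:=\max\{1,\,2\sigma w_C R/\tau\}$ makes the right-hand side strictly larger than $\sigma w_C-\tau$ for every $M>M_{U,\tau}$ and every $q\in\partial h_M(U)$, which is the claim.

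There is no serious obstacle here. The only two points needing care are the choice of the test direction $-\sigma v$ — this is where the skewness of $\gamma$ genuinely enters, since $-\sigma v$ is forced onto the unit sphere precisely because $v$ realizes the skewness — and the bookkeeping of the asymmetry of $\gamma$ when applying the triangle inequality and Cauchy--Schwarz, which is why the error term is controlled through $\max(\gamma(u-c),\gamma(c-u))$ rather than through a single symmetric norm of $u-c$.
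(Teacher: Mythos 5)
Your proof is correct, and it takes a genuinely different route from the paper's. The paper localizes the whole set $\partial h_M(u)$ near $w_C\,\partial\gamma(v)$ using outer semicontinuity of $\partial\gamma$ together with a compactness argument on the open superlevel set $W=(\gamma^\circ)^{-1}((\sigma w_C-\tau,\infty))$; this yields the conclusion through Lemma~\ref{lem:subdiffskew} (which gives $\gamma^\circ(-p)=\sigma$ for $p\in\partial\gamma(v)$) but produces a non-constructive threshold $M_{U,\tau}$. You instead test $-q$ against the single $\gamma$-unit vector $-\sigma v$ (unit precisely because $v\in\SD_\gamma$), reduce everything to the scalar quantity $\langle q,v\rangle$, and bound it from below by a direct computation with the subgradient identity $\langle q_c,u-c+Mv\rangle=\gamma(u-c+Mv)$, the triangle inequality, and generalized Cauchy--Schwarz. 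This is more elementary (no semicontinuity of the subdifferential map, and Lemma~\ref{lem:subdiffskew} is bypassed entirely), and it buys an explicit, quantitative bound $\gamma^\circ(-q)\geq \sigma w_C(1-2R/M)$ with a concrete threshold $M_{U,\tau}=\max\{1,\,2\sigma w_C R/\tau\}$. What it gives up is the set inclusion \eqref{incl:approxSubDiff}, i.e.\ the information that the subgradients themselves cluster around $w_C\,\partial\gamma(v)$, which is a stronger geometric statement than the one-dimensional estimate you extract; for the purposes of Theorem~\ref{th:upperBound}, however, your estimate is all that is needed. Two minor points of care that you handle correctly: the degenerate case $u-c+Mv=\0$ (where $\partial\gamma(\0)=B_{\gamma^\circ}$) does not break any of your inequalities, and the asymmetry of $\gamma$ is properly tracked through $\max(\gamma(u-c),\gamma(c-u))$.
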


\begin{proof}
    Since $\frac{x+Mv}{\gamma(x+Mv)}\to v$ as $M\to\infty$ for every $x\in\RR^d$ and $\partial\gamma$ is invariant under positive scalings, we will approximate $\partial\gamma(u-c+Mv)$ by $\partial\gamma(v)$ for every $u\in U$ and ${c\in C}$.
    This will be obtained using the outer semi-continuity of $\partial\gamma$ \cite[Theorem~D.6.2.4]{FundConvAnal}, in particular at $v$, which means that for every $\varepsilon>0$, there exists $\delta_{\varepsilon}>0$ such that $\gamma(y-v)<\delta_{\varepsilon}$ implies $\partial\gamma(y)\subset\partial\gamma(v)+\varepsilon B_{\gamma^\circ}$.

    The continuity of $\gamma$ implies that the set
    \[S_{\varepsilon}=\left\{x\in\RR^d\setminus\{-v\}:\gamma\left(\frac{x+v}{\gamma(x+v)}-v\right)<\delta_{\varepsilon}\right\}\]
    is open.
    Remark that we have $\mathbf 0\in S_{\varepsilon}$ because we assumed $v$ to be a skewness direction, which entails $\gamma(v)=1$.
    Since $S_\varepsilon$ is open, there exists a ball $\eta B_{\gamma}$ around $\mathbf 0$ contained in $S_{\varepsilon}$, where $\eta>0$.
    But $U$ is bounded and $C$ is finite, so $U-C=\{u-c:u\in U,c\in C\}$ is still bounded, and there exists $M_{\varepsilon}>0$ such that for all $M>M_{\varepsilon}$\[\frac{1}{M}(U-C)\subset \eta B_{\gamma}\subset S_\varepsilon.\]
    We should mention that we consider $M_\varepsilon$ large enough such that $u-c+Mv\neq\0$ for all $M>M_\varepsilon$, which is possible since $v$ is a non-zero vector.

    Consider now  any $u\in U$ and $M>M_{\varepsilon}$.    
    For any $c\in C$ we have $(u-c)/M\in S_{\varepsilon}$, which means
    that 
    \[\gamma\left(\frac{u-c+Mv}{\gamma(u-c+Mv)}-v\right)<\delta_{\varepsilon}\]
    so also
    $\partial\gamma(u-c+Mv)=\partial\gamma\left(\frac{u-c+Mv}{\gamma(u-c+Mv)}\right)\subset\partial\gamma(v)+\varepsilon B_{\gamma^\circ}$. 
    Thus after weighted summation we find 
    \begin{equation} \label{incl:approxSubDiff}
        \partial h_M(u)\subset {\bar{w}}_C\partial\gamma(v)+\varepsilon {\bar{w}}_C B_{\gamma^\circ}.
    \end{equation}

    Now for any $p\in\partial\gamma(v)$,
    by Lemma~\ref{lem:subdiffskew} we have  $-\frac{p}{\sigma}\in \partial\gamma(-v)$.
    Consequently, $\gamma^\circ(-\frac{p}{\sigma})=1$.
    Therefore, $\gamma^\circ(-p)=\sigma$ and
    \(\gamma^\circ\left(-{\bar{w}}_C\cdot p\right)={\bar{w}}_C\cdot {\gamma^\circ(-p)}=\sigma {\bar{w}}_C.\)
    
Thus $-{\bar{w}}_C\partial\gamma(v)$ is a compact subset of $W:=(\gamma^\circ)^{-1}\left(\left(\sigma {\bar{w}}_C-\tau,+\infty\right)\right)$
which is open by the continuity of $\gamma^\circ$.
So $\varepsilon>0$ may be chosen such that \[-{\bar{w}}_C\partial\gamma(v)-\varepsilon {\bar{w}}_C B_{\gamma^\circ}\subset W\] 
and inclusion \eqref{incl:approxSubDiff} shows that $-\partial h_M(u)\subset W$ for any $u\in U$ and $M>M_{\varepsilon}$.

Hence, for every $M>M_{\varepsilon}$ and $q\in\partial h_M(U)$ we will have  $\gamma^\circ(-q)>\sigma {\bar{w}}_C-\tau$,
which proves the lemma with $M_{U,\tau}:=M_{\varepsilon}$.
\end{proof}

\begin{theorem}\label{th:upperBound}
    Consider $(C,{\bar{w}})\leq (A,w)$ such that ${\bar{w}}_C/w_A>1/(1+\sigma)$ and $U$ some bounded subset of $\RR^d$.
    Then any solution of the  Fermat--Weber problem lies outside $U$ if we replace $c$ with $c-Mv$ for all $c\in C$ with $M$ sufficiently large. In other words, the breakdown point is at most $1/(1+\sigma)$.
\end{theorem}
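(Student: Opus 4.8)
The plan is to reduce the statement to the two preceding lemmas via the convex optimality condition. First I would record that $f_M=g+h_M$ is a finite convex function on $\RR^d$ (a sum of gauges composed with translations), so the Moreau--Rockafellar sum rule $\partial f_M=\partial g+\partial h_M$ holds at every point with no constraint qualification needed, and $u\in\RR^d$ is a Fermat--Weber point of the corrupted sample $\{c-Mv:c\in C\}\cup D$ if and only if $\0\in\partial g(u)+\partial h_M(u)$. Thus it suffices to produce an $M$ large enough that $\0\notin\partial g(u)+\partial h_M(u)$ for every $u\in U$.

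Second, I would turn the hypothesis $w_C>\frac{1}{1+\sigma}$ into a numerical gap. Since $w_A=1$ we have $w_D=w_A-w_C=1-w_C$, whence
\[\sigma w_C-w_D=(1+\sigma)w_C-1>0.\]
Set $\tau:=(1+\sigma)w_C-1>0$ and let $M_{U,\tau}$ be the threshold furnished by Lemma~\ref{lem:part2} applied to this bounded set $U$ and this constant $\tau$.

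Third comes the contradiction argument. Fix any $M>M_{U,\tau}$ and suppose some $u\in U$ were a minimizer of $f_M$. Then $\0=p+q$ for some $p\in\partial g(u)$ and $q\in\partial h_M(u)$, i.e.\ $p=-q$. Lemma~\ref{lem:part1} gives $\gamma^\circ(p)\le w_D$, while Lemma~\ref{lem:part2} gives $\gamma^\circ(-q)>\sigma w_C-\tau=w_D$; since $p=-q$ forces $\gamma^\circ(p)=\gamma^\circ(-q)$, we obtain $w_D\ge\gamma^\circ(p)=\gamma^\circ(-q)>w_D$, which is impossible. Hence no solution of the corrupted Fermat--Weber problem lies in $U$, as claimed.

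All the analytic content is concentrated in Lemmas~\ref{lem:part1} and~\ref{lem:part2}, so I do not anticipate a genuine obstacle in assembling the theorem; the only points needing a little care are the justification of the sum rule and the optimality condition (harmless because every function involved is finite-valued, and $\partial\gamma(\0)=B_{\gamma^\circ}$ covers the points where $g$ fails to be differentiable) and the bookkeeping choice of $\tau$, which is precisely where the strict inequality $w_C>\frac{1}{1+\sigma}$ enters to keep the final chain of inequalities strict.
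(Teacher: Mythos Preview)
Your proposal is correct and follows essentially the same approach as the paper: reduce to the optimality condition $\0\in\partial g(u)+\partial h_M(u)$, then derive a contradiction from the bounds of Lemmas~\ref{lem:part1} and~\ref{lem:part2}. The only cosmetic difference is the choice of $\tau$: you take $\tau=(1+\sigma)w_C-1$ so that $\sigma w_C-\tau=w_D$ and the contradiction reads $w_D\ge\gamma^\circ(p)>w_D$, whereas the paper takes $\tau=\sigma\bigl(w_C-\tfrac{1}{1+\sigma}\bigr)$ so that $\sigma w_C-\tau=\tfrac{\sigma}{1+\sigma}$ and pairs it with $w_D<\tfrac{\sigma}{1+\sigma}$; both choices are equally valid.
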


\begin{proof}
    Set $\tau:=\sigma\cdot\left({\bar{w}}_C-{w_A}/{(1+\sigma)}\right)$ and $M_0:=M_{U,\tau}$ from Lemma~\ref{lem:part2}.
    We show that no point of $U$ can be a minimum of $f_M$ for any 
    $M>M_0$ by reductio ad absurdum.
    So we assume that there exists $u\in U$ and $M>M_0$ such that $f_M$ attains its minimum at $u$.
    Due to the convexity of $f_M$, this is equivalent to $\mathbf 0\in\partial f_M(u)=\partial g(u)+\partial h_M(u)$.
    Therefore, there exists $p\in\partial g(u)$ such that $-p\in\partial h_M(u)$.

    From Lemma~\ref{lem:part1}, we have
    \begin{equation} \label{ineq:contr1}
        \gamma^\circ(p)\leq {\bar{w}}'_D=w_A-{\bar{w}}_C<w_A-w_A/(1+\sigma)=w_A\cdot\sigma/(1+\sigma).
    \end{equation}
    On the other hand, Lemma~\ref{lem:part2} implies that
    \begin{equation} \label{ineq:contr2}
        \gamma^\circ(p)>\sigma {\bar{w}}_C-\tau=w_A\cdot\sigma/(1+\sigma).
    \end{equation}
    The relations \eqref{ineq:contr1} and \eqref{ineq:contr2} are contradictory, so our assumption was false.
    Accordingly, for any  
    $M>M_0$ no minimum of $f_M$ can be a point of $U$.
\end{proof}


\section{The lower bound on the breakdown point} \label{sec:lowerBound}

We now show that the Fermat--Weber point with gauge-distance  is $\tau$-robust for any $\tau<1/(1+\sigma)$.
We  adapt the proof of \cite[Theorem~2.2]{bdpoint} to the asymmetric case. 
This yields a much simpler proof than the adaptation of \cite[Theorem 6]{Plastria2016} to the asymmetric case
suggested in \cite{Plastria2023}, as no techniques from convex analysis are used.

We will use the notation:
\[B_\gamma(a;r)=\{x\in\RR^d:\gamma(x-a)\leq r\}=a+rB_{\gamma}.
\]

\begin{theorem} \label{th:lowerBound}
    Let $\gamma$ be a gauge with skewness $\sigma$, and let $(A,w)$ be a weighted sample. 
    Suppose $(A',w')$ is a $\tau$-corruption of $(A,w)$ for some $\tau < 1/(1+\sigma)$.
    Then there exists a bounded set $K_{\tau,A,w}$, depending only on $A$, $w$ and $\tau$, 
    such that all solutions of \eqref{FWproblem} on the corrupted sample $(A',w')$ are contained in $K_{\tau,A,w}$.
\end{theorem}

\begin{proof}
    Let $Y^\star$ be the set of minima of \eqref{FWproblem}. 
    Since $A$ is finite and $Y^\star$ is compact we may define  $M:=\max_{y^\star\in Y^\star}\max_{a\in A}\gamma({y^\star-a})$.  

    We will consider that the corruption of $(A,w)$ is given by $(A',w')=(A,w)-(C,{\bar{w}})+(C',{\bar{w}}')$ with ${\bar{w}}_C/w_A=\tau$, ${\bar{w}}'_{C'}={\bar{w}}_C$, and $C'\subset\RR^d$.
    
    We denote by $X^\star$ the set of minima of \eqref{FWproblem} with $(A,w)$ replaced by $(A',w')$.
    We will prove that there exists a positive real number $\kappa$, depending on $A$ and ${\bar{w}}_C$, but not on $A'$, such that \[\max_{y^\star\in Y^\star}\max_{x^\star\in X^\star}\gamma(y^\star-x^\star)\leq\kappa.\]
    After proving the existence of $\kappa$, our conclusion follows by setting
    \begin{equation} \label{eq:K_robustness}
        K_{\tau,A,w}:=\left\{x\in\RR^d:\max_{y^\star\in Y^\star}\gamma(y^\star-x)\leq\kappa\right\}
    \end{equation}
    which is bounded 
    because $Y^\star$ is a compact set.

    Select arbitrary points $y^\star\in Y^\star$ and $x^\star\in X^\star$.
    Moreover, let 
    \[\delta:=\inf\{\gamma(x^\star-u):u\in B_\gamma(y^\star;(1+\sigma)M)\}.\]
     $B_\gamma(y^\star;(1+\sigma)M)$ is compact and $\gamma$ is continuous, so $\delta=\gamma(x^\star-u')$ for some $u'\in B_\gamma(y^\star;(1+\sigma)M)$. 
    From the triangle inequality and the definition of $\sigma$, we obtain:
    \begin{equation} \label{eq:simpleBound}
    \begin{split}
        \gamma(y^\star-x^\star)&\leq \sigma\cdot\gamma(x^\star-y^\star)\\
        & \leq \sigma\cdot\left( \gamma(x^\star-u')+\gamma(u'-y^\star) \right) \\
        & = \sigma\cdot\delta + \sigma\cdot\gamma(u'-y^\star) \\
        & \leq \sigma\cdot\delta + \sigma(1+\sigma)M.
    \end{split}
    \end{equation}

    Let us first consider the case $\delta>0$, i.e. $x^\star\notin B_\gamma(y^\star;(1+\sigma)M)$.

    For any corrupted element $a\in C'$  we obtain
    \begin{equation}\label{eq:corr}
    \begin{split}
        \gamma(x^\star-a)&\geq\gamma(y^\star-a)-\gamma(y^\star-x^\star)\\
        &\geq \gamma(y^\star-a) - \left(\sigma\cdot\delta + \sigma(1+\sigma)M\right),
    \end{split}
    \end{equation}
    where we used the triangle inequality and \eqref{eq:simpleBound}.

\begin{figure}
\centering
   \begin{tikzpicture}[scale=0.3];
    \draw[fill] (0,0) coordinate (as) circle(0.2) node[above right] {$y^\star$}; 
    \draw[fill] (20,0) coordinate (xs) circle(0.2) node[below] {$x^\star$}; 
    \draw[fill] (2,-2) coordinate (a) circle(0.2) node[left] {$a$}; 
    \draw[name path=ball]  (-3,2).. controls (4,2)..(5,-1)--(4,-3)--(-2,-3) .. controls (-5,0) .. cycle; 
    \draw  (4,2)  node[right]{$B_\gamma(y^\star,(1+\sigma)M)$}; 
    \draw[name path=axs] (a)--(xs);
    \path[name intersections={of=ball and axs} ] (intersection-1) coordinate (u);
    \draw[fill] (u)  circle(0.2) node[below  right] {$u$}; 
   \draw[dotted] (u)--(as)--(a);

  \end{tikzpicture}
 \caption{Case  of an uncorrupted $a\in A$ in the proof of theorem \ref{th:lowerBound}}\label{fig:ProofTh}
\end{figure}

    For any uncorrupted element $a\in A$
    we have \(\gamma(y^\star - a) \leq M\) by the definition of~$M$, which implies 
    \(\gamma(a - y^\star) \leq \sigma \cdot M<(1+\sigma)M\).
    So $a$ is in the interior of the ball \( B_\gamma(y^\star; (1+\sigma)M) \) while \( x^\star \) is outside this ball. Therefore we may
    consider  the point of intersection $u$ between the boundary $\partial B_\gamma(y^\star;(1+\sigma)M)$ and the segment $[a,x^\star]$
    (see Figure \ref{fig:ProofTh}).
     We then have \(\gamma(u-y^\star)=(1+\sigma)M\) and
    \begin{equation}\label{eq:non-corr}
    \begin{split}
    \gamma(x^\star-a)& = \gamma(x^\star-u)+\gamma(u-a) \quad\text{($\gamma$ is linear over a line segment)}\\
    & \geq \delta + \gamma(u-a) \quad\text{(definition of $\delta$)}\\
    & \geq \delta + \gamma(u-y^\star)-\gamma(a-y^\star) \quad\text{(triangle inequality)}\\
    & = \delta + (1+\sigma)M - \gamma(a-y^\star) \\
    & \geq \delta + (1+\sigma)M - \sigma\cdot\gamma(y^\star-a)
    \quad\text{(skewness)}\\
    & \geq \delta + \gamma(y^\star-a) \quad\text{(definition of $M$)}.
    \end{split}
    \end{equation}

    By weighting inequalities \eqref{eq:corr} and \eqref{eq:non-corr} according to the corrupted and uncorrupted elements in $(A',w')$ respectively, and then adding the resulting inequalities, and using ${\bar{w}}'_{C'}={\bar{w}}_C$, we obtain
    \begin{equation} \label{ineq:main}
    \begin{split}
        \sum_{a\in A'}w'_{a}\gamma(x^\star-a) & \geq \sum_{a\in A'}w'_{a}\gamma(y^\star-a) + (w_A-{\bar{w}}_C)\delta-{\bar{w}}_C(\sigma\cdot\delta+\sigma(1+\sigma)M)\\
        & = \sum_{a\in A'}w'_{a}\gamma(y^\star-a) + [w_A-(1+\sigma){\bar{w}}_C]\delta-{\bar{w}}_C\sigma(1+\sigma) M.
    \end{split}
    \end{equation}

    From the definition of $x^\star$, we must have
    \[\sum_{a\in A'}w'_{a}\gamma(y^\star-a)\geq\sum_{a\in A'} w'_{a}\gamma(x^\star-a).\]
    Together with \eqref{ineq:main} and the hypothesis that $w_A>(1+\sigma){\bar{w}}_C$, we obtain:
    \[\delta\leq \frac{\sigma(1+\sigma){\bar{w}}_C}{w_A-(1+\sigma){\bar{w}}_C}M.\]
    We proved this bound under the assumption $\delta>0$, but it is  evidently also valid for $\delta=0$.
    Then \eqref{eq:simpleBound} gives
    \begin{equation} \label{eq:defKappa}
        \gamma(y^\star-x^\star)\leq \left(\frac{\sigma {\bar{w}}_C}{w_A-(1+\sigma){\bar{w}}_C}+1\right)\cdot \sigma (1+\sigma)M
        =:\kappa.
    \end{equation}
    This bound depends only on $A$ (from the definition of $M$) and ${\bar{w}}_C/w_A=\tau$, whereas $y^\star$ and $x^\star$ were chosen arbitrarily in $Y^\star$ and $X^\star$, respectively.
    
    Hence, $\max_{y^\star\in Y^\star}\max_{x^\star\in X^\star}\gamma(y^\star-x^\star)\leq\kappa$.
\end{proof}

\begin{remark} \label{rmk:quantitativeRobustness}
    The proof of Theorem~\ref{th:lowerBound} also provides an upper bound on the deviation between Fermat--Weber points under data corruption.
    More precisely, if $(A,w)$ is the original sample and $(C,{\bar{w}})$ is the corrupted part, then
    \begin{equation} \label{eq:quantitativeRobustness}
        \gamma(y^\star-x^\star)\leq \left(\frac{\sigma {\bar{w}}_C}{w_A-(1+\sigma){\bar{w}}_C}+1\right)\cdot \sigma (1+\sigma)\max_{{\scriptscriptstyle b\in \FW_\gamma(A,w)}}\max_{{\scriptscriptstyle a\in A}}\gamma(b-a)
    \end{equation}
    where $x^\star$ is any Fermat--Weber point for $(A,w)$ and $y^\star$ any Fermat--Weber point for its corruption.
    This comes from~\eqref{eq:defKappa}.

    The bound above is generally loose and may overestimate the true deviation.
    Tighter estimates are more difficult to obtain, but in some cases refinements are possible. For instance, when $\gamma$ is a polyhedral norm, one can exploit the structure of the elementary hull introduced in the next section (see Definition~\ref{def:EH} and Proposition~\ref{prop:elemSet}).
    Although deriving explicit constants from this approach is computationally involved, it provides a basis for improving the error bounds in structured settings.
\end{remark}

Putting Theorems \ref{th:lowerBound} and \ref{th:upperBound} together, we obtain the following result.
\begin{corollary} \label{cor:breakdownFW}
The Fermat--Weber point with gauge distance has breakdown point $1/(1+\sigma)$.

\end{corollary}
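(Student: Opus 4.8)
The plan is to read the breakdown point directly off the two preceding theorems by unwinding the definition of $\tau$-robustness from Section~\ref{sec:intro}. Set $\mathrm{bd}:=\sup\{\tau<1 : \FW_\gamma \text{ is }\tau\text{-robust}\}$. It suffices to prove that $\FW_\gamma$ is $\tau$-robust for every $\tau<1/(1+\sigma)$ and that it is \emph{not} $\tau$-robust for every $\tau>1/(1+\sigma)$; then $\mathrm{bd}=1/(1+\sigma)$ follows, regardless of what happens exactly at $\tau=1/(1+\sigma)$.

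For the first half, fix $\tau<1/(1+\sigma)$ and a weighted sample $(A,w)$. A $\tau$-corruption replaces a subsample $C\subset A$ with $w_C\le\tau$ by a sample of equal total weight; in particular $w_C<1/(1+\sigma)$, so Theorem~\ref{th:lowerBound} furnishes a bounded set containing all Fermat--Weber points of the corrupted instance. The one thing to check is that a single bounded set works simultaneously for \emph{all} $\tau$-corruptions, i.e. that the dependence on $w_C$ can be removed. This is immediate from the explicit constant $\kappa$ in \eqref{eq:defKappa}: the map $w_C\mapsto\kappa$ is increasing on $[0,1/(1+\sigma))$, so replacing $w_C$ by $\tau$ produces a constant $\kappa_\tau$ and a bounded set $K_\tau:=\{x\in\RR^d : \max_{a^\star\in A^\star}\gamma(a^\star-x)\le\kappa_\tau\}$ (with $A^\star=\FW_\gamma(A,w)$) which, by the monotonicity just noted, contains the set from Theorem~\ref{th:lowerBound} for every admissible $w_C$, hence contains $\FW_\gamma(A')$ for every $\tau$-corruption $A'$ of $A$. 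Thus $\FW_\gamma$ is $\tau$-robust.

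For the second half, fix $\tau>1/(1+\sigma)$. Non-robustness requires only \emph{one} bad sample, so choose $(A,w)$ and $C\subset A$ with $1/(1+\sigma)<w_C\le\tau$ (for instance $A$ a two-point set and $C$ a single point of suitable weight, which is possible since $\tau<1$). Given any candidate bounded set $K$, apply Theorem~\ref{th:upperBound} with $U=K$: for $M$ large enough, replacing each $c\in C$ by $c-Mv$ yields an instance all of whose Fermat--Weber points lie outside $K$. Since this replacement preserves the weights of $C$ and $w_C\le\tau$, it is a genuine $\tau$-corruption of $A$. Hence no bounded set can contain the Fermat--Weber points of every $\tau$-corruption of this $A$, so $\FW_\gamma$ is not $\tau$-robust. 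Combining the two halves gives $\mathrm{bd}=1/(1+\sigma)$.

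The argument is essentially bookkeeping once Theorems~\ref{th:lowerBound} and~\ref{th:upperBound} are available; the only genuine (and minor) point is the monotonicity of $\kappa$ in $w_C$, which is what lets us upgrade "bounded set depending on $A$ and $w_C$" to "bounded set depending on $A$ and $\tau$", as demanded by the definition of $\tau$-robustness. I do not anticipate any real obstacle.
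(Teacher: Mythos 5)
Your proof is correct and is exactly the argument the paper intends: the corollary is stated there as an immediate consequence of Theorems~\ref{th:lowerBound} and~\ref{th:upperBound} with no written proof, and your two halves (plus the observation that $\kappa$ in \eqref{eq:defKappa} is increasing in $w_C$, so the bound at $w_C=\tau$ dominates all admissible corruptions) supply precisely the bookkeeping that the paper leaves implicit. No gaps.
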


\section{Uniform robustness} \label{sec:unifRobust}

From Corollary~\ref{cor:breakdownFW} and Definition~\ref{def:uniformRobust}, a Fermat--Weber point is uniformly robust if for every weighted set $(A,w)$ there exists a bounded set $K_{A,w}$ such that the Fermat--Weber point of every $\tau$-corruption $(A',w')$ of $(A,w)$ belongs to $K_{A,w}$ whenever $\tau<1/(1+\sigma)$.

We already obtained in \eqref{eq:K_robustness} a set that contains
all Fermat--Weber points of any $\tau$-corruption of $(A,w)$.
More precisely, this set is $K=\FW_\gamma(A,w)+\kappa(-B_\gamma)$ where $\kappa$ comes from \eqref{eq:defKappa} taking $\tau={\bar{w}}_C/w_A$:
\[\kappa=\frac{1-\tau}{1-(1+\sigma)\tau}\cdot \sigma (1+\sigma)\max_{{\scriptscriptstyle b\in \FW_\gamma(A,w)}}\max_{{\scriptscriptstyle a\in A}}\gamma(b-a).\]
Note that altering the size of the corruption such that $\tau\nearrow 1/(1+\sigma)$ makes $\kappa$ go to infinity.
In particular, the closer $\tau$ is to the breakdown point $1/(1+\sigma)$, the larger the bounded set $K$ containing all the optima of the corrupted Fermat--Weber problems.
In other words, we can use the proof of Theorem~\ref{th:lowerBound} to check uniform robustness and we show, in the next example, that we cannot select $K$ independently of $\tau$ for every gauge.

\begin{example} \label{eg:3points}
    Let $A$ be a set of three points in $\RR^2$ and $\gamma$ the Euclidean norm $\|\cdot\|$ (of skewness $\sigma=1$).
    If there is a point, say $a\in A$, such that $w_a>w_A/2$, then $a$ will be the unique Fermat--Weber point whatever change we make to the other points, so the uniform bound $K=A$ can be taken in this case.
    However, this is impossible if we assume $w_a<w_A/2$ for all $a\in A$.

    To see this, fix $a\neq b$ from $A$ and choose $m$ arbitrarily in $\RR^2$, but outside the line through $a$ and $b$.
    We will prove that $m$ is a Fermat--Weber point of three points $a, b, c'$ where $c'$ is to be constructed.
    In other words, by appropriately choosing $c$ (a corruption of the remaining point in $A\setminus\{a,b\}$) with a corresponding weight $w_c<w_A/(1+\sigma)=w_A/2$, we can make $m$ a Fermat--Weber point of the new set $\{a,b,c\}$.
    
    From \cite[Theorem~18.37]{BMS:1999} $m$ will be the Fermat--Weber point of $\{a,b,c\}$ as soon as $c\in\RR^2$ and $w_c>0$ satisfy
     \begin{equation} \label{eq:mFWEuc}
        w_a\frac{m-a}{\|m-a\|}+w_b\frac{m-b}{\|m-b\|}+w_c\frac{m-c}{\|m-c\|}=\mathbf 0.
    \end{equation}
    \[\]


Denote $v_{ab}:=w_a\frac{m-a}{\|m-a\|}+w_b\frac{m-b}{\|m-b\|}$,
which cannot be the zero vector, since  $m, a$ and $b$
are not aligned. For the same reason $m-a$ and $m-b$ are independent,
so $0<\|v_{ab}\|<w_a+w_b$.
If we choose $c= m+\rho v_{ab}$ for any $\rho>0$ and $w_c=\|v_{ab}\|$
we will have $w_c \frac{m-c}{\|m-c\|}=w_c \frac{-\rho v_{ab}}{\|\rho v_{ab}\|}=-v_{ab}$, showing that (\ref{eq:mFWEuc}) holds, hence that
$m$ is a Fermat-Weber point of $\{a,b,c\}$ while $w_c<w_a+w_b$, i.e. 
$w_c<w_A/2$.

Since $m$ was arbitrary  outside the line containing $a$ and $b$,
it may be chosen outside any bounded $K$. 
It follows that no bounded $K$ can contain all Fermat--Weber points for all choices of $c$ and any $w_c<w_A/2$. 

This counterexample shows that the Fermat--Weber point for the Euclidean norm in $\RR^2$ is not uniformly robust.
 \end{example}

\begin{example} \label{eg:1dimpGauge}

In the one-dimensional case a gauge is always of the form $\gamma_b(x):=\max((1-b)x,-bx)$ for some $b\in]0,1[$, up to a scaling factor.
One can see that the Fermat--Weber point is then the $b$-quantile which has breakdown point $\min(b,{1-b})$ (see \cite[\textsection 3.2.1]{Staudte+Sheather:1990}), so will lie in $\conv(A)$ whatever change we make to a subsample~$(C,{\bar{w}})$ with ${\bar{w}}_C<\min(b,{1-b})$.
Indeed, after corruption, the fraction of corrupted points smaller than $\min(A)$ or greater than $\max(A)$ is smaller than $\min(b,{1-b})$ so none of them could be a $b$-quantile, leaving the contaminated quantile in $[\min(A),\max(A)]=\conv(A)$.
Therefore, one can select the bounded $K=\conv(A)$ in Theorem~\ref{th:lowerBound}, which is independent of ${\bar{w}}_C$,
showing that all gauges are uniformly robust in dimension 1.
\end{example}

We will show in the following example that separable gauges in $\RR^d$ are also uniformly robust by extending the result from Example~\ref{eg:1dimpGauge}.

\begin{example} \label{eg:sepGauge}
    Let $\gamma$ be a separable gauge, i.e. of the form
  $\gamma(x)=\sum_{i=1}^d \lambda_i\gamma_{b_i}(x_i)$ where each $\gamma_{b_i}$ is a 1-dimensional gauge as in  Example \ref{eg:1dimpGauge} with  $b_i\in]0,1[$ and $\lambda_i>0$ for every $i\in\{1,\dots,d\}$.
    Computing a Fermat--Weber problem on such a separable gauge reduces to computing the $b_i$-quantiles for each coordinate. One can check directly or using \cite[Example~3]{Plastria:1992} that $\gamma$ has skewness  $\sigma=\max_{i}\left(\max(b_i,1-b_i)/\min(b_i,1-b_i)\right)$.
    This implies that, as long as we corrupt only parts of weight smaller than $1/({1+\sigma})=\min_i \min(b_i,1-b_i)$,
    Theorem~\ref{th:lowerBound} applies for each coordinate. Thus, the one-dimensional
    case explained in Example~\ref{eg:1dimpGauge}
     implies that we can set $K=[m_1,M_1]\times\dots\times[m_d,M_d]$, where $m_i=\min_{a\in A}a_i$ and $M_i=\max_{a\in A}a_i$.

\end{example}

In fact the uniform robustness  in Example~\ref{eg:sepGauge} is not an isolated case, but it is caused by the polyhedrality of the gauge, as we show below.
To understand better what is going on, we need to make use of the \emph{elementary convex sets} introduced by Durier and Michelot \cite{Durier+Michelot:85}.

For $p\in\partial B_{\gamma^\circ}$, denote by $N(p)$ the convex cone generated by the exposed face of $B_{\gamma}$ in the direction of $p$, i.e. the face $\partial\gamma^\circ(p)=\{x\in B_{\gamma}: \langle p,x\rangle = 1\}$, hence
 $N(p):=\RR^+\partial\gamma^\circ(p)$. 
For $p\in B_{\gamma^\circ}$ with $\gamma^\circ(p)<1$ we will set $N(p)=\{\mathbf 0\}$.

Figure \ref{fig:TriangleEH} shows on the left the unit ball of the triangular gauge in Figure \ref{fig:ballsSD} (b) dashed. At several typical points of its boundary the subdifferential of this gauge is depicted as dotted vectors: along each triangle's side it is a singleton while at each vertex it is a closed line segment. All these subgradients have dual unit length, so they define convex cones of type $N(p)$; e.g. $N(p_1)$ is the closure of the grey area and  $N(p)$ is the closed vertical half-line issued from the origin $\0$ for any $p\in]p_1,p_2[$, the relative interior of the subgradient at the top vertex.

\begin{definition}
Consider $\pi=(p_a)_{a\in A}$ a finite family of points from the unit ball $B_{\gamma^\circ}$ of the dual gauge and let $C_\pi=\bigcap_{a\in A}({a+N(p_a)})$.
An \emph{elementary convex set} for $A$  is a non-empty set of the form $C_{\pi}$.
\end{definition}

If $\gamma^\circ(p_a)<1$ for some $a\in A$, then $C_\pi=\{a\}$ or it is empty; in particular, every point of $A$ forms an elementary convex set. All elementary convex sets for $A$ cover $\RR^d$, 
while their relative interiors partition $\RR^d$, see \cite{Durier:90}. 

\begin{remark} \label{rmk:defFamElemConvSet}
    We want to emphasize that an elementary convex set $S$ for $A$ can be defined by multiple families of points from $B_{\gamma^\circ}$.
    We use that \[x\in a+N(p_a)=a+\RR^+\partial\gamma^\circ(p_a)\text{ if and only if }\langle p_a,x-a\rangle=\gamma(x-a).\] 
    But we also know that $\langle p_a,x-a\rangle=\gamma(x-a)$ is equivalent to $p_a\in\partial\gamma(x-a)$, so
    \begin{equation}\label{eq:xincone}
        x\in a+N(p_a)\ \text{ if and only if }\ p_a\in\partial\gamma(x-a).
    \end{equation}

    Any family $\pi=(p_a)_{a\in A}$ with $p_a$ in the relative interior of $\partial\gamma(x-a)$ for each $a\in A$, where $x$ is a point in the relative interior of $S$, satisfies $S=C_\pi$; see Appendix~\ref{sec:ElemConvSets}.
\end{remark}

Figure \ref{fig:TriangleEH} shows the elementary convex sets for the set $A=\{a,b,c,d,e\}$. 
Each closed region, line segment and intersection point is an  elementary convex set.
E.g. the thick vertical closed segment immediately above $a$ equals $C_\pi$ for 
the family $\pi=(p_4,p_3,p_1,p_3,p_3)$, in which $p_4$ may be replaced by any $p\in]p_1,p_2[$. Also the thick closed half-line $L$ issued from $c$ coming from $e$ equals $C_\pi$ for any $\pi=(p_2,p_3,p_c,p_2,p_e)$ with both $p_c$ and $p_e$ in $]p_2,p_3[$; other choices are possible, e.g. $(p_c,p_e)=(p_3,p_2)$.

\begin{figure}
\centering
   \begin{tikzpicture}[scale=0.3];
  \begin{scope}
    \path (0,8) coordinate (top);
    \path (-30:8) coordinate (right);
    \path (210:8) coordinate (left);
    \draw[dashed] (top) -- (right) -- (left) -- cycle;
   \draw[fill] (0,0) coordinate (o) circle(0.2) node[below] {$\0$}; 

\draw[very thick]     (o) -- ($(o)!1.5!(top)$) coordinate (T); 
\draw[]     (o) -- ($(o)!1.5!(right)$) coordinate (R); 
\draw[]     (o) -- ($(o)!1.5!(left)$) coordinate (LL); 

    \path ($(top)!0.3!(left)$) coordinate (tl);
    \draw[dotted,-stealth] (tl) -- ++(150:3) node[below]{$p_1$};
    \draw[dotted,-stealth] (top)--++(150:3) coordinate (topl) node[below]{$p_1$};
   \draw[dotted,-stealth] (left)--++(150:3) coordinate (leftl) node[below]{$p_1$};
    \path ($(top)!0.5!(right)$) coordinate (tr);
    \draw[dotted,-stealth] (tr) -- ++(30:3)  node[below]{$p_2$};
    \draw[dotted,-stealth] (top)--++(30:3) coordinate (topr) node[below]{$p_2$};
    \draw[dotted,-stealth] (right)--++(30:3) coordinate (rightr) node[below]{$p_2$};
    \draw[dotted,-stealth] (top)--($(topl)!0.75!(topr)$) coordinate (topm) node[above]{$p_4$};
    \draw[dotted] (topl) -- (topm) -- (topr);
    \path ($(left)!0.2!(right)$) coordinate (lr);
    \draw[dotted,-stealth] (lr) -- ++(-90:3)  node[right]{$p_3$};
    \draw[dotted,-stealth] (left) -- ++(-90:3)   coordinate (leftr) node[right]{$p_3$};
    \draw[dotted,-stealth] (right) -- ++(-90:3)  coordinate (rightl)  node[right]{$p_3$};
    \draw[dotted] (leftl) -- (leftr) (rightl) -- (rightr);

    \fill[lightgray, fill opacity=0.5]  (T) -- (o) -- (LL) -- (-10,12) -- (T); 
    \draw[] (-7,2) node[fill=white] {\small $N(p_1)$};
    \draw[] (0,3) node[rotate=90,fill=white,draw] {\small $N(p_4)$};
  \end{scope}
\begin{scope}[shift={(20,0)}]
    \path[clip] (-6,-5) rectangle (10,10);
    
\foreach \h/\v/\n in {1/0/a, 2/5/b, 5/1/c, -3/4/d, 1.6/3/e} 
{ 
\path (\h,\v) coordinate (\n);
\begin{scope}[shift={(\n)}]
     \draw[fill] (0,0)  circle(0.2) node[below] {$\n$}; 
     \draw (0,0) -- (90:20)  (0,0) -- (210:20)  (0,0) -- (-30:20);
          \end{scope}
};

\path[name path=leftb] (b) -- ++(210:20);
\path[name path=rightd] (d) -- ++(-30:20);
\path[name intersections={of=leftb and rightd} ] (intersection-1) coordinate (bd);

\path[name path=rightb] (b) -- ++(-30:20);
\path[name path=topc] (c) -- ++ (90:20); 
\path[name intersections={of=rightb and topc} ] (intersection-1) coordinate (bc);

\path[name path=righta] (a) -- ++(-30:20);
\path[name path=leftc] (c) -- ++(210:20);
\path[name intersections={of=righta and leftc} ] (intersection-1) coordinate (ac);

\path[name path=topa] (a) -- ++ (90:20); 
\path[name path=rightd] (d) -- ++(-30:20);
\path[name intersections={of=topa and rightd} ] (intersection-1) coordinate (ad);

\fill[lightgray, fill opacity=0.5] (bd)--(b)--(bc)--(c)--(ac)--(a)--(ad)--cycle;

  \draw[thick](a)--(ad);
  \draw[thick](c)-- ++(-30:20) node[pos=0.1,below]{\tiny $L$} ;
  \end{scope}
  
  \end{tikzpicture}
 \caption{On the left the normal cones and on the right the elementary hull of a 5-point set for the triangular unit ball (b) of Figure~\ref{fig:ballsSD} }\label{fig:TriangleEH}
\end{figure}

\begin{definition} \label{def:EH} 
 The \emph{elementary hull} $\EH_{\gamma}(A)$ of $A$ is the union of all bounded elementary convex sets for $A$.
\end{definition}

The elementary hull of the 5-point set $A$ in Figure \ref{fig:TriangleEH}
is shown at right as a grey area (with its boundary and a segment with one endpoint at $d$).
It is worth noting that in this specific case, the elementary hull is equivalent to the tropical convex hull as defined in \cite{Develin+Sturmfels:2004}, as proven in Theorem~15 of the aforementioned paper.

As another example, consider a separable gauge defined by 
$\gamma(x)=\sum_{i=1}\gamma_{b_i}(x_i)$ as in Example \ref{eg:sepGauge}.
Then $\EH_{\gamma}(A)$ is exactly the set $K$ defined there.

Note that when $D\subset A$ each elementary convex set for $A$ is a subset of some elementary convex set for $D$. 
If the former is unbounded, so will be the latter. It follows that $\RR^d\setminus \EH_{\gamma}(A)\subset\RR^d\setminus \EH_{\gamma}(D)$, hence $\EH_{\gamma}(D)\subset\EH_{\gamma}(A)$.

Since some elementary convex sets are unbounded, it is useful to consider their recession cones.
For a non-empty convex set $X$, its \emph{recession cone} is defined as:
\[\rec(X):=\{u\in\RR^d:x+\lambda u\in X\text{ for every }\lambda\geq 0\text{ and }x\in X\}.\]
We will use later the fact that $\rec(X)=\{\0\}$ if and only if $X$ is bounded whenever $X$ is a non-empty closed convex set~\cite[Theorem~8.4]{Rockafellar:70}.

\begin{proposition} \label{prop:elemSet}
    Let $x$ be a Fermat--Weber point for a  sample $(A,w)$ in which a subsample $(C,{\bar{w}})$ with ${\bar{w}}_C/w_A<1/(1+\sigma)$ was corrupted.
    Then $x\in\EH_{\gamma}(D)$, where $(D,w')=(A,w)-(C,{\bar{w}})$.
\end{proposition}

\begin{proof}
    Assume that there exists an unbounded elementary convex set $S$ for $D$ 
    containing $x$ in its relative interior.
    Since $S$ is an unbounded convex set, there exists a non-zero vector $r$ in its recession cone and we can choose it with $\gamma(r)=1$.
    This means that for every $s\in S$ we also have $s+\lambda r\in S$ for all $\lambda\geq 0$. 
    Consider then  a family $\pi=(p_d)_{d\in D}$ such that $p_d\in\partial\gamma(x-d)$ for every $d\in D$ defining $S=C_{\pi}$ and $x\in C_{\pi}$. Remark~\ref{rmk:defFamElemConvSet} implies 
    \[\frac{\frac{x-d}{\lambda}+ r}{\gamma(\frac{x-d}{\lambda}+ r)}=
    \frac{x-d+\lambda r}{\gamma(x-d+\lambda r)}\in\partial\gamma^\circ(p_d)\ \text{ for every }\lambda\geq 0 \text{ and }d\in D.\]
    Letting $\lambda\to\infty$ we obtain
    \(r\in\partial\gamma^\circ(p_d)\)  for every \(d\in D\)
    because subdifferentials are closed sets.
    Hence \(\langle p_d,r\rangle=\gamma^\circ(p_d)=1\) since 
    $p_d\in\partial\gamma(x-d)$.
    Thus, for every $p\in\sum_{d\in D}w'_d\partial\gamma(x-d)$, we have $\langle p,r\rangle=\sum_{d\in D}w'_d\langle p_d,r\rangle=\sum_{d\in D}w'_d\gamma^\circ(p_d)=w'_D$.

    Let $(C_c,{\bar{w}}')$ be the corruption of $(C,{\bar{w}})$.
    Since we assumed $x$ to be a Fermat--Weber point of $(D,w')+(C_c,{\bar{w}}')$, there exists
    \[q\in\sum_{c\in C_c}{\bar{w}}'_c\partial\gamma(x-c)\ \text{ such that }-q\in\sum_{d\in D}w'_d\partial\gamma(x-d).\]
    In particular, $\langle -q,r\rangle=w'_D$.
    But \[\langle -q,r\rangle\leq\gamma^\circ(-q)\gamma(r)\leq\sigma\gamma^\circ(q)\leq\sigma\cdot 
    {\bar{w}}_C,\] where the last inequality comes from Lemma~\ref{lem:part1}.
    Therefore, \[\sigma/(1+\sigma)<1-{\bar{w}}_C/w_A=w'_D/w_A=\langle -q,r\rangle/w_A\leq\sigma\cdot {\bar{w}}_C/w_A<\sigma/(1+\sigma),\] which is a contradiction.

   Since the relative interiors of elementary convex sets partition
   $\RR^d$, $x$ must belong to one of these. We just showed that this cannot be an unbounded one, so we conclude that $x$ 
    must belong to $\EH_\gamma(D)$.
\end{proof}

The previous proposition does not improve Theorem~\ref{th:lowerBound} when $\gamma$ is strictly convex, since this case entails $\EH_{\gamma}(D)=\RR^d$ when $D$ consists of at least $d+1$ points in general position.
However, a better result can be obtained for the polyhedral case.

\begin{corollary} \label{cor:polyhedral}
 The Fermat--Weber point is uniformly robust for any polyhedral gauge, i.e. one can select a bounded $K$ in Theorem~\ref{th:lowerBound} independent of ${\bar{w}}_C$ when $\gamma$ is polyhedral.
\end{corollary}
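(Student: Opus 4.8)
The plan is to combine Proposition~\ref{prop:elemSet} with the finiteness of the collection of elementary convex sets in the polyhedral case. When $\gamma$ is polyhedral, $B_{\gamma^\circ}$ is a polytope, so it has finitely many faces; consequently there are only finitely many distinct cones $N(p)$ as $p$ ranges over $\partial B_{\gamma^\circ}$ (one for each face of $B_{\gamma^\circ}$, plus $\{\mathbf 0\}$). Since an elementary convex set for $D$ has the form $C_\pi=\bigcap_{d\in D}(d+N(p_d))$ and each $N(p_d)$ ranges over this finite list, there are only finitely many elementary convex sets for $D$, hence finitely many \emph{bounded} ones; their union $\EH_\gamma(D)$ is therefore a bounded set. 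The key point is that this bounded set depends only on $D=A\setminus C$ and on $\gamma$ — not on the corruption $C_c$, nor on the weight $w_C$.

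First I would spell out that $\EH_\gamma(D)$ is bounded for polyhedral $\gamma$, as above. Next, since $C$ is allowed to be any subset of $A$ with $w_C<1/(1+\sigma)$, I would take the union over all such subsets: set
\[
K:=\bigcup_{\substack{C\subseteq A\\ w_C<1/(1+\sigma)}}\EH_\gamma(A\setminus C).
\]
This is a finite union (there are only finitely many subsets of the finite set $A$) of bounded sets, hence bounded. By Proposition~\ref{prop:elemSet}, for any corruption of any admissible $C$, every Fermat--Weber point of the corrupted sample lies in $\EH_\gamma(A\setminus C)\subseteq K$; and the uncorrupted optima $A^\star$ lie in $\EH_\gamma(A)\subseteq K$ as well (taking $C=\emptyset$). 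Thus $K$ works in Theorem~\ref{th:lowerBound} and is manifestly independent of $w_C$.

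One small subtlety worth noting explicitly: Proposition~\ref{prop:elemSet} places $x$ in $\EH_\gamma(D)$ via the elementary convex set containing $x$ in its \emph{relative interior}, and every point of $\RR^d$ lies in the relative interior of exactly one elementary convex set (the relative interiors partition $\RR^d$), so this is unambiguous. I do not expect any real obstacle here — the content is entirely in Proposition~\ref{prop:elemSet} and in the elementary observation that polyhedrality forces finitely many elementary convex sets; the rest is bookkeeping. If one wants a cleaner constant-free statement, one can even take $K=\EH_\gamma(A)$ directly, since $\EH_\gamma(D)\subseteq\EH_\gamma(A)$ for $D\subseteq A$ by the monotonicity remark following Definition~\ref{def:EH}; this gives a single bounded set depending only on $A$ and $\gamma$.
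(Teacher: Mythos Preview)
Your proposal is correct and follows essentially the same approach as the paper: invoke Proposition~\ref{prop:elemSet}, observe that polyhedrality plus finiteness of $A$ yields only finitely many elementary convex sets so that $\EH_\gamma(A)$ is bounded, and take $K=\EH_\gamma(A)$. The paper goes directly to $K=\EH_\gamma(A)$ via the monotonicity $\EH_\gamma(D)\subseteq\EH_\gamma(A)$, which is exactly the simplification you note at the end; your intermediate union over subsets $C$ is unnecessary but harmless.
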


\begin{proof}
    From Proposition~\ref{prop:elemSet}, any corrupted Fermat--Weber point belongs to some $\EH_{\gamma}(D)$, which is a subset of $\EH_{\gamma}(A)$. Each elementary convex set for $A$ is an intersection of a finite number of cones (one for each $a\in A$), each of which is a cone over one of the finitely many faces of the polyhedral unit ball of $\gamma$. Thus  there are only finitely many distinct elementary convex sets for $A$.
  Accordingly, the elementary hull $\EH_{\gamma}(A)$ is a union of finitely many bounded sets, and is therefore bounded.
    As a consequence, we can consider $K=\EH_{\gamma}(A)$ in Theorem~\ref{th:lowerBound}.
\end{proof}

As the Fermat--Weber point may not be unique, Corollary~\ref{cor:polyhedral} should be understood in terms of the set of all Fermat--Weber points.
The uniform robustness guarantees that for any $1/(1+\sigma)$-corruption, the entire set of Fermat--Weber points of the corrupted sample remains contained within a fixed bounded set $K$ (determined by the original uncorrupted sample), regardless of the specific $1/(1+\sigma)$-corruption; see Remark~\ref{rmk:uniformRobust-nonunique}.


\section{Uniform robustness for norms} \label{sec:unifRobustNorm}

We now will rather analyze the position of the Fermat--Weber point of 
a sample $(D,w)$ that is contaminated by $(C,{\bar{w}})$.
We showed in Proposition~\ref{prop:elemSet} that $\EH_\gamma(D)$ contains all  Fermat--Weber points for the contaminated sample $(D,w)+(C,{\bar{w}})$ 
of full weight $w_D+{\bar{w}}_C$ as soon as ${\bar{w}}_C/(w_D+{\bar{w}}_C)<1/(1+\sigma)$, or equivalently $\sigma{\bar{w}}_C<w_D$.
Below we will prove the converse for the case when $\gamma$ is a norm, i.e. $\sigma=1$.
But, firstly, we need two lemmas, which are also valid for gauges.

\begin{lemma} \label{lem:linear}
    Let $(D,w)$ be a weighted set in $\RR^d$.
    For any gauge $\gamma$ we then have $\gamma\left(\sum_{d\in D}w_d d\right)=\sum_{d\in D}w_d \gamma(d)$ if and only if $\bigcap_{d\in D}\partial\gamma(d)\neq\emptyset$.
\end{lemma}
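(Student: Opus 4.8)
The plan is to reduce the statement to a single application of the generalized Cauchy--Schwarz inequality $\langle p,x\rangle\le\gamma^\circ(p)\gamma(x)$ together with the characterization $\partial\gamma(d)=\{p\in\partial B_{\gamma^\circ}:\langle p,d\rangle=\gamma(d)\}$. For the ``if'' direction, suppose $p\in\bigcap_{d\in D}\partial\gamma(d)$; then $\gamma^\circ(p)=1$ and $\langle p,d\rangle=\gamma(d)$ for every $d$. Summing with the weights gives $\langle p,\sum_d w_d d\rangle=\sum_d w_d\gamma(d)$, while the generalized Cauchy--Schwarz inequality gives $\langle p,\sum_d w_d d\rangle\le\gamma^\circ(p)\gamma(\sum_d w_d d)=\gamma(\sum_d w_d d)$. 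Hence $\sum_d w_d\gamma(d)\le\gamma(\sum_d w_d d)$, and the reverse inequality is just the triangle inequality (sublinearity of $\gamma$), so equality holds. One should note the trivial degenerate case: if some $d=\0$ it contributes nothing to either side and $\partial\gamma(\0)=B_{\gamma^\circ}$ imposes no constraint, so we may harmlessly assume all $d\neq\0$; if $\sum_d w_d d=\0$ while not all $d=\0$, then the displayed equality forces $\sum_d w_d\gamma(d)=0$, impossible since weights and $\gamma$-values are positive, so this situation simply cannot arise under the hypothesis.

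For the ``only if'' direction, assume $\gamma(\sum_{d\in D}w_d d)=\sum_{d\in D}w_d\gamma(d)=:s$. If $s=0$ then all $d=\0$ (positive weights) and every $p\in\partial B_{\gamma^\circ}$ lies in the intersection, so assume $s>0$, whence $x:=\sum_d w_d d\neq\0$. Pick any $p\in\partial\gamma(x)$, so $\gamma^\circ(p)=1$ and $\langle p,x\rangle=\gamma(x)=s$. Then
\begin{equation*}
    \sum_{d\in D}w_d\gamma(d)=s=\langle p,x\rangle=\sum_{d\in D}w_d\langle p,d\rangle\le\sum_{d\in D}w_d\,\gamma^\circ(p)\,\gamma(d)=\sum_{d\in D}w_d\gamma(d),
\end{equation*}
using $\langle p,d\rangle\le\gamma^\circ(p)\gamma(d)=\gamma(d)$ termwise. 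Since the extreme terms are equal and all weights are positive, each inequality $\langle p,d\rangle\le\gamma(d)$ must be an equality; combined with $\gamma^\circ(p)=1$ this says $p\in\partial\gamma(d)$ for every $d\in D$. Hence $p\in\bigcap_{d\in D}\partial\gamma(d)$, which is therefore nonempty.

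I do not expect a genuine obstacle here; the only points requiring care are the bookkeeping around the degenerate cases ($d=\0$ or $\sum_d w_d d=\0$) and being explicit that positivity of the weights is what upgrades ``sum of inequalities is an equality'' to ``every inequality is an equality.'' Everything else is a direct unwinding of the duality facts recalled in Section~\ref{sec:gauge}, and in particular no strict convexity or symmetry of $\gamma$ is used, consistent with the claim that the lemma holds for all gauges.
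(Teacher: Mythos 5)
Your proof is correct and follows essentially the same route as the paper's: both arguments hinge on evaluating a subgradient against the weighted sum and using the termwise generalized Cauchy--Schwarz inequality, with positivity of the weights forcing (or breaking) termwise equality. The only notable difference is in the ``if'' direction, where the paper invokes an external pasting lemma identifying $\bigcap_{d\in D}\partial\gamma(d)$ with $\partial\gamma\left(\sum_{d\in D}w_d d\right)$, whereas you replace that citation with a short self-contained squeeze between Cauchy--Schwarz and sublinearity (and you additionally handle the degenerate cases $d=\0$ explicitly, which the paper leaves implicit); this makes your version marginally more elementary without changing the substance.
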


\begin{proof}
    Firstly, assume that $\bigcap_{d\in D}\partial\gamma(d)\neq\emptyset$.
    We will use repeatedly \cite[Lemma~8]{Plastria-pasting-gauges1}, which states (among other) that $\partial\gamma(x)\cap\partial\gamma(y)=\partial\gamma(z)$ for every $z\in]x,y[$ whenever this intersection is non-empty, where $x,y$ are distinct points in $\RR^d$.
    Using also the fact that subdifferentials of gauges are invariant under positive scalings, the corresponding statement for larger intersections gives \[\bigcap_{d\in D}\partial\gamma(d)=\partial\gamma\left(\frac{1}{w_D}\sum_{d\in D}w_d d\right)=\partial\gamma\left(\sum_{d\in D}w_d d\right).\]
    Let $r$ be a point from the latter set.
    Then $\gamma(d)=\langle r,d\rangle$ for every $d\in D$ and $\gamma\left(\sum_{d\in D}w_d d\right)=\left\langle r,\sum_{d\in D}w_d d\right\rangle=\sum_{d\in D}w_d \left\langle r,d\right\rangle=\sum_{d\in D}w_d \gamma(d)$.

    Secondly, we consider the case when $\bigcap_{d\in D}\partial\gamma(d)=\emptyset$ and let $q\in\partial\gamma\left(\sum_{d\in D}w_d d\right)$.
    Then, using the generalized Cauchy--Schwarz inequality, we obtain
    \[\gamma\left(\sum_{d\in D}w_d d\right)=\left\langle q,\sum_{d\in D}w_d d\right\rangle=\sum_{d\in D}w_d\langle q,d\rangle<\sum_{d\in D}w_d\gamma(d).\]
    The inequality is strict because the weights are positive, 
    $q$ is a subgradient of $\gamma$ so $\gamma^\circ(q)\leq 1$, and by the generalized Cauchy--Schwarz inequality we have, for all $d\in D$, that
    \[\langle q,d\rangle\leq\gamma^\circ(q)\gamma(d)\leq\gamma(d),\]
    while there is at least one $d\in D$ for which $q\not\in\partial\gamma(d)$, implying that 
    the first of these inequalities is strict. 
\end{proof}

\begin{lemma} \label{lem:rec_Cpi}
    Let $C_\pi$ be an elementary convex set with respect to $D$ (and the gauge~$\gamma$) induced by $(p_d)_{d\in D}$ and such that $C_\pi\neq\{d\}$ for every $d\in D$.
    Then $C_\pi$ is bounded if and only if $\bigcap_{d\in D}\partial\gamma^\circ(p_d)$ is empty.
\end{lemma}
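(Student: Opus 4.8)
The plan is to reduce boundedness of $C_\pi$ to triviality of its recession cone, and then to identify that cone. First I would use the standing hypothesis to rule out degeneracies: if $\gamma^\circ(p_d)<1$ for some $d$ then $N(p_d)=\{\mathbf 0\}$, forcing $C_\pi\subseteq\{d\}$ and hence, an elementary convex set being nonempty, $C_\pi=\{d\}$, contrary to assumption; so $\gamma^\circ(p_d)=1$ for all $d\in D$ and every $N(p_d)=\RR^+\partial\gamma^\circ(p_d)=\cone\bigl(F(p_d)\bigr)$ is a bona fide closed convex cone. I would also record the elementary identity $N(p_d)\cap\partial B_\gamma=\partial\gamma^\circ(p_d)$: the inclusion ``$\supseteq$'' holds because every $x\in F(p_d)=\partial\gamma^\circ(p_d)$ lies in $B_\gamma$ and has $\gamma(x)=1$ (by the generalized Cauchy--Schwarz inequality, since $\langle p_d,x\rangle=1=\gamma^\circ(p_d)$ forces the chain $1=\langle p_d,x\rangle\le\gamma^\circ(p_d)\gamma(x)=\gamma(x)\le 1$ to be tight), and ``$\subseteq$'' holds because a ray from the origin meets the gauge sphere $\partial B_\gamma$ in exactly one point, so rescaling a nonzero $x\in\cone(F(p_d))$ so that $\gamma(x)=1$ lands it back in $F(p_d)$.

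Next I would compute $\rec(C_\pi)$. Since $N(p_d)$ is a closed convex cone, $d+N(p_d)$ is a nonempty closed convex set with recession cone $N(p_d)$; as $C_\pi=\bigcap_{d\in D}(d+N(p_d))$ is a nonempty intersection of finitely many closed convex sets, the recession cone of the intersection equals the intersection of the recession cones, so $\rec(C_\pi)=\bigcap_{d\in D}N(p_d)$. A nonempty closed convex set is bounded exactly when its recession cone is $\{\mathbf 0\}$, so $C_\pi$ is bounded if and only if $\bigcap_{d\in D}N(p_d)=\{\mathbf 0\}$.

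It then remains to check that $\bigcap_{d\in D}N(p_d)\neq\{\mathbf 0\}$ is equivalent to $\bigcap_{d\in D}\partial\gamma^\circ(p_d)\neq\emptyset$, which together with the previous step gives the claim in contrapositive form. In one direction, a common point $s$ of all the $\partial\gamma^\circ(p_d)=F(p_d)$ has $\gamma(s)=1$, hence $s\neq\mathbf 0$, and $s\in\partial\gamma^\circ(p_d)\subseteq N(p_d)$ for each $d$, so $\mathbf 0\neq s\in\bigcap_{d\in D}N(p_d)$. Conversely, given $\mathbf 0\neq r\in\bigcap_{d\in D}N(p_d)$, the vector $r/\gamma(r)$ still lies in every $N(p_d)$ (each being a cone) and satisfies $\gamma\bigl(r/\gamma(r)\bigr)=1$, so by the identity of the first paragraph it lies in $\partial\gamma^\circ(p_d)$ for every $d$, i.e. in $\bigcap_{d\in D}\partial\gamma^\circ(p_d)$.

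I expect no single step to be genuinely hard; the care needed is (i) invoking the hypothesis $C_\pi\neq\{d\}$ at exactly the point where an $N(p_d)$ could otherwise collapse to $\{\mathbf 0\}$, and (ii) the two standard convex-analytic facts used above---that the recession cone of a finite nonempty intersection of closed convex sets is the intersection of the recession cones, and that $N(p_d)\cap\partial B_\gamma=\partial\gamma^\circ(p_d)$---which should both be stated explicitly, the latter being where positive homogeneity of the gauge really enters.
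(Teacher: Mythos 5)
Your proof is correct and follows essentially the same route as the paper: reduce boundedness to triviality of the recession cone, compute $\rec(C_\pi)=\bigcap_{d\in D}N(p_d)$ via the intersection rule for recession cones of nonempty closed convex sets, and identify $\bigcap_{d\in D}N(p_d)$ with the cone over $\bigcap_{d\in D}\partial\gamma^\circ(p_d)$ by positive homogeneity. Your explicit preliminary step ruling out $\gamma^\circ(p_d)<1$ via the hypothesis $C_\pi\neq\{d\}$ is a small point of extra care that the paper leaves implicit, but the substance is the same.
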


\begin{proof}
    For $p_d\in\partial B_{\gamma^\circ}$ we have $N(p_d)=\RR^+\partial\gamma^\circ(p_d)$, i.e. the cone generated by the exposed face $\partial\gamma^\circ(p_d)$ of $B_\gamma$.
    This allows us to obtain \[\bigcap_{d\in D} N(p_d)=\{\0\}\cup\RR^+\bigcap_{d\in D}\partial\gamma^\circ(p_d)\] because $u\in N(p_d)$ if and only if $u=\0$ or $u/\gamma(u)\in\partial\gamma^\circ(p_d)$.
    We used that $C_\pi\neq\{d\}$ in order to write $N(p_d)=\RR^+\partial^\circ(p_d)$.
    If it were not the case, $C_\pi$ could have been induced by some $p_d\in B_{\gamma^\circ}$ with $\gamma^\circ(p_d)<1$.
    
    What is more, we obtain
    \begin{eqnarray*}
        \rec (C_\pi)&=&\rec\left(\bigcap_{d\in D}(d+N(p_d))\right)
                    = \bigcap_{d\in D}\rec(d+N(p_d))\\
                    &=& \bigcap_{d\in D} N(p_d)
                    = \{\0\}\cup\RR^+\bigcap_{d\in D}\partial\gamma^\circ(p_d)
    \end{eqnarray*}
    whenever $C_\pi\neq\{d\}$ for every $d\in D$.
    We could interchange the intersection with taking recession cones due to \cite[Corollary~8.3.3]{Rockafellar:70} since the sets $d+N(p_d)$ are closed and convex and their intersection, $C_\pi$, is non-empty.

    Note now that it is not possible to have $\bigcap_{d\in D}\partial\gamma^\circ(p_d)=\{\0\}$ since $\partial\gamma^\circ(p_d)\subset\partial B_\gamma$ when $C_\pi\neq\{d\}$. Accordingly, $C_\pi$ is bounded if and only if
    $\rec (C_\pi)=\{\0\}$ if and only if $\bigcap_{d\in D}\partial\gamma^\circ(p_d)$ is empty.
\end{proof}

\begin{theorem} \label{th:EHnorms}
    If $\gamma$ is a norm, then for every $m\in\EH_\gamma(D)$ there exists $c\in\RR^d$ and ${\bar{w}}_c\in ]0,w_D[$ such that $m$ is a Fermat--Weber point of the contaminated set $(D,w)+(c,{\bar{w}}_c)$.
\end{theorem}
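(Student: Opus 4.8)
The plan is to verify the subdifferential optimality condition directly. Recall that, for a gauge $\gamma$, a point $m$ is a Fermat--Weber point of $(D,w)+(c,w_c)$ if and only if $\mathbf 0\in\sum_{d\in D}w_d\,\partial\gamma(m-d)+w_c\,\partial\gamma(m-c)$ (Moreau--Rockafellar sum rule for finite convex functions, together with $\partial\gamma(\mathbf 0)=B_{\gamma^\circ}$). So I want to produce $c\in\RR^d$ and $w_c\in(0,w_D)$ making this inclusion hold. Since $m\in\EH_\gamma(D)$, fix a bounded elementary convex set $C_\pi$ for $D$ with defining family $\pi=(p_d)_{d\in D}\subseteq B_{\gamma^\circ}$ and $m\in C_\pi$, so $m-d\in N(p_d)$ for each $d$. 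If $m\neq d$, then $m-d\neq\mathbf 0\in N(p_d)$ forces $\gamma^\circ(p_d)=1$ and $(m-d)/\gamma(m-d)\in\partial\gamma^\circ(p_d)$, hence $\langle p_d,m-d\rangle=\gamma(m-d)$ and $p_d\in\partial\gamma(m-d)$; if $m=d$, then $p_d\in B_{\gamma^\circ}=\partial\gamma(m-d)$ trivially. Put $q:=\sum_{d\in D}w_d p_d\in\sum_{d\in D}w_d\,\partial\gamma(m-d)$.

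The key estimate is that one can arrange either $q=\mathbf 0$ or $0<\gamma^\circ(q)<w_D$. If $C_\pi\neq\{d\}$ for every $d\in D$, then Lemma~\ref{lem:rec_Cpi} and boundedness of $C_\pi$ give $\bigcap_{d\in D}\partial\gamma^\circ(p_d)=\emptyset$, and Lemma~\ref{lem:linear} applied to the gauge $\gamma^\circ$ and the points $p_d$ then yields the strict inequality $\gamma^\circ(q)<\sum_{d\in D}w_d\gamma^\circ(p_d)=w_D$. The remaining possibility is $C_\pi=\{d_0\}$ for some $d_0$, which forces $m=d_0\in D$ and lets us re-choose $p_{d_0}\in\partial\gamma(\mathbf 0)=B_{\gamma^\circ}$ freely. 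With $q':=\sum_{d\neq d_0}w_d p_d$ (so $\gamma^\circ(q')\le w_D-w_{d_0}$), choose $p_{d_0}:=-q'/w_{d_0}$ when $\gamma^\circ(q')\le w_{d_0}$, giving $q=\mathbf 0$; otherwise choose $p_{d_0}:=-q'/\gamma^\circ(q')$, giving $q=(1-w_{d_0}/\gamma^\circ(q'))\,q'\neq\mathbf 0$ with $\gamma^\circ(q)=\gamma^\circ(q')-w_{d_0}\le w_D-2w_{d_0}<w_D$.

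It remains to exhibit $(c,w_c)$. If $q=\mathbf 0$, then $m$ minimizes $\sum_{d\in D}w_d\gamma(\cdot-d)$, hence also $\sum_{d\in D}w_d\gamma(\cdot-d)+w_c\gamma(\cdot-m)$ for every $w_c\in(0,w_D)$; take $c:=m$. If $q\neq\mathbf 0$, set $w_c:=\gamma^\circ(q)\in(0,w_D)$ and $p:=-q/\gamma^\circ(q)$; since $\gamma$ is a \emph{norm}, $\gamma^\circ(p)=\gamma^\circ(-q)/\gamma^\circ(q)=1$. Pick $z\in\partial\gamma^\circ(p)$, which is nonempty with $\gamma(z)=1$, hence $z\neq\mathbf 0$, and set $c:=m-z\neq m$. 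Then $\langle p,z\rangle=\gamma^\circ(p)=1=\gamma(z)$ and $\gamma^\circ(p)=1$ give $p\in\partial\gamma(z)=\partial\gamma(m-c)$, so $-q=w_c p\in w_c\,\partial\gamma(m-c)$ and therefore $\mathbf 0=q+(-q)\in\sum_{d\in D}w_d\,\partial\gamma(m-d)+w_c\,\partial\gamma(m-c)$; by convexity $m$ is a Fermat--Weber point of $(D,w)+(c,w_c)$.

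The subdifferential bookkeeping is routine; the subtle points are the two degeneracies, namely that $m$ may already be a Fermat--Weber point of $(D,w)$ (handled by contaminating \emph{at} $m$) and that the only bounded elementary convex set through $m$ may be the singleton $\{m\}$ when $m\in D$ (handled by using that $\partial\gamma(\mathbf 0)$ is the full dual ball to re-balance $q$). The norm hypothesis is used precisely where one passes between $q$ and $-q$ in the dual gauge — both in the re-balancing of the degenerate case and, crucially, in the final construction, where it guarantees that normalizing $q$ to dual-gauge one yields a legitimate weight $w_c<w_D$ rather than one of size up to $\sigma\gamma^\circ(q)$.
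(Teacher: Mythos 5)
Your proof is correct and follows essentially the same route as the paper's: pick the subgradients $p_d$ from a bounded elementary convex set through $m$, use Lemma~\ref{lem:rec_Cpi} together with Lemma~\ref{lem:linear} (applied to $\gamma^\circ$) to get $\gamma^\circ\bigl(\sum_d w_d p_d\bigr)<w_D$, and then place the contaminating point $c=m-z$ with $z\in\partial\gamma^\circ\bigl(-q/\gamma^\circ(q)\bigr)$ so that the subdifferentials balance. The only divergences are in the degenerate cases --- the paper dispatches $m\in D$ via Witzgall's majority theorem while you re-balance $p_{d_0}$ inside $\partial\gamma(\mathbf 0)=B_{\gamma^\circ}$, and you explicitly cover $q=\mathbf 0$ (where the paper's formula $u=-q/w_c$ would be undefined) by contaminating at $m$ itself --- so your treatment of these edge cases is, if anything, the more careful one.
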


\begin{proof}
    If $m\in D$, we can set $c=m$ and ${\bar{w}}_c=w_{D\setminus \{m\}}<w_{D}$.
    For the contaminated set $(D,w)+(c,{\bar{w}}_c)$ the point $m=c$ then has weight ${\bar{w}}_c+w_m$ which is higher than the sum of weights of all other points $w_{D\setminus\{m\}}$, so a majority weight. Witzgall's majority theorem \cite{Plastria2020, Witzgall:TechnicalReport:1964} (for more details see Theorem~\ref{th:majority} below) then ensures that  $m$ is a Fermat--Weber point of the contaminated set $(D,w)+(c,{\bar{w}}_c)$.

    From now on, assume that $m\in \EH_\gamma(D)\setminus D$.
    Let $C_\pi$ be the smallest elementary convex set for $D$ containing $m$ and  let $C_\pi$ be determined by $p_d\in\partial\gamma(m-d)\subset\partial B_{\gamma^\circ}$ for every $d\in D$.
    Since $m\in \EH_\gamma(D)$ it lies in some bounded elementary convex set for $D$, which hence contains $C_\pi$, so this latter must be bounded.
        Consequently, Lemma~\ref{lem:rec_Cpi} implies $\bigcap_{d\in D}\partial\gamma^\circ(p_d)=\emptyset$.

    If $m$ is a Fermat--Weber point of $(D,w)$, then we could simply consider $c=m$ and ${\bar{w}}_c=w_D/2$.
    Indeed, adding $c=m$ with a positive weight ${\bar{w}}_c$ to $(D,w)$ would make $m$ the unique Fermat--Weber point of $(D,w)+(c,{\bar{w}}_c)$.

    If $m$ is not a Fermat--Weber point of $(D,w)$, then we consider ${\bar{w}}_c:=\gamma^\circ\left(-\sum_{d\in D}w_d p_d\right)$.
    This value is strictly positive since the contrary would imply $\0=\sum_{d\in D}w_dp_d\in\sum_{d\in D}w_d\partial\gamma(m-d)$ showing that $m$ is actually a Fermat--Weber point of $(D,w)$.
    Since $\gamma^\circ$ is also symmetric, we can write ${\bar{w}}_c=\gamma^\circ\left(\sum_{d\in D}w_d p_d\right)$.
    What is more, Lemma~\ref{lem:linear} implies that \[{\bar{w}}_c<\sum_{d\in D}w_d\gamma^\circ(p_d)=w_D\] because $\bigcap_{d\in D}\partial\gamma^\circ(p_d)=\emptyset$.
    
    Set $u:=-\frac{1}{{\bar{w}}_c}\sum_{d\in D}w_d p_d$, choose $q\in\partial\gamma^\circ(u)$ and define $c=m-q$.
    Then \[\sum_{d\in D}w_d p_d+{\bar{w}}_c u=\0.\]
    Also, since $\gamma^\circ(u)=1$, it follows that $u\in\partial\gamma(q)=\partial\gamma(m-c)$.
    Given that $p_d\in\partial\gamma(m-d)$, this shows that
    \[\0\in\sum_{d\in D}w_d\partial\gamma(m-d)+{\bar{w}}_c\partial\gamma(m-c),\]
    so $m$ is a Fermat--Weber point of the weighted set $(D,w)+(c,{\bar{w}}_c)$.
\end{proof}

\begin{corollary} \label{cor:charUnifRobustNorm}
    A norm $\gamma$ is uniformly robust if and only if $\EH_\gamma(A)$ is bounded for every finite set $A\subset\RR^d$.
\end{corollary}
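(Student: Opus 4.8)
The plan is to prove Corollary~\ref{cor:charUnifRobustNorm} by combining the two inclusions established earlier: Proposition~\ref{prop:elemSet} gives that every corrupted Fermat--Weber point lies in $\EH_\gamma(D)$, and Theorem~\ref{th:EHnorms} gives, in the norm case, that every point of $\EH_\gamma(D)$ is realized as a Fermat--Weber point of some contamination with admissible weight. The first direction (bounded essential hulls $\Rightarrow$ uniform robustness) is the easy one: given a finite $A$ and any corruption with $w_C<1/(1+\sigma)=1/2$ (recall $\sigma=1$ for a norm), Proposition~\ref{prop:elemSet} and the monotonicity remark $\EH_\gamma(D)\subseteq\EH_\gamma(A)$ (from the paragraph after Definition~\ref{def:EH}) place every corrupted optimum inside $\EH_\gamma(A)$, which is bounded by hypothesis; so $K:=\EH_\gamma(A)$ works uniformly in $w_C$, exactly as in the proof of Corollary~\ref{cor:polyhedral}.

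For the converse (uniform robustness $\Rightarrow$ bounded essential hulls) I would argue by contraposition. Suppose there is a finite $A\subset\RR^d$ with $\EH_\gamma(A)$ unbounded; I want to produce, for each bound, a corrupted sample whose Fermat--Weber point escapes that bound, contradicting uniform robustness. The subtlety is that uniform robustness as defined concerns corruptions of a \emph{fixed} weighted sample $(A,w)$ in which a subset $C$ is \emph{replaced}, whereas Theorem~\ref{th:EHnorms} is phrased in the contamination language $(D,w)+(c,w_c)$. These two viewpoints are reconciled by the discussion in Section~\ref{sec:intro} (and used throughout): replacing $C\subset A$ of weight $w_C$ is, after renormalization, the same as adding outlier mass $w_C$ to the uncorrupted part $D=A\setminus C$ with $w_C<\tau w_D/(1-\tau)$; when $\sigma=1$ the threshold $w_C<1/2$ translates precisely to the contamination bound $v_C<w_D$. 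So from an unbounded $\EH_\gamma(A)$ I pick a sequence of points $m_n\in\EH_\gamma(A)$ with $\gamma(m_n)\to\infty$; applying Theorem~\ref{th:EHnorms} with $D=A$ gives for each $n$ a single contaminating point $c_n$ with weight $w_{c_n}\in(0,w_A)$ such that $m_n$ is a Fermat--Weber point of $(A,w)+(c_n,w_{c_n})$. Rescaling the total weight to $1$ turns this into a genuine $w_C$-corruption of a sample with $w_C<1/2$, whose Fermat--Weber set contains the unbounded sequence $m_n$; hence no single bounded $K$ can contain all corrupted optima, so $\gamma$ is not uniformly robust.

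One should double-check the bookkeeping in that translation, since it is the only place where something could go wrong: the contamination weight $w_{c_n}$ delivered by Theorem~\ref{th:EHnorms} is only known to satisfy $w_{c_n}<w_A$, and after normalizing the new total weight $w_A+w_{c_n}$ to $1$ the corrupted fraction becomes $w_{c_n}/(w_A+w_{c_n})<1/2$, which is exactly the admissible range for a norm; so the argument is tight but valid. I would also note that $\EH_\gamma(A)\setminus A$ is where the interesting $m_n$ live (points of $A$ itself are trivially bounded), and that $\EH_\gamma$ being a union of finitely or infinitely many elementary convex sets is irrelevant here — all that matters is the set-level inclusions. I would then assemble the two directions into the stated equivalence. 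The main obstacle is not any hard estimate but getting the correspondence between the replacement formulation of breakdown and the contamination/destination-set formulation stated cleanly enough that the weight inequality $v_C<w_D \Leftrightarrow w_C<1/2$ is transparent; everything else is a direct citation of Proposition~\ref{prop:elemSet} and Theorem~\ref{th:EHnorms}.
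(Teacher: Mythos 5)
Your proposal is correct and follows exactly the route the paper intends: the forward direction is Proposition~\ref{prop:elemSet} together with the monotonicity $\EH_\gamma(D)\subseteq\EH_\gamma(A)$ (as in Corollary~\ref{cor:polyhedral}), and the converse is Theorem~\ref{th:EHnorms} applied to an unbounded sequence in $\EH_\gamma(A)$, with the contamination weight $w_c<w_D$ translating to a corrupted fraction below $1/2$. Your bookkeeping on that translation is the only delicate point and you handle it correctly (the paper itself allows the corrupted weight to vary below the threshold, exactly as in Example~\ref{eg:3points}), so there is nothing further to add.
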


\begin{proof}
    Firstly, assume that $\gamma$ is uniformly robust and let $A\subset\RR^d$ be finite.
    From the definition of uniform robustness, there exists a bounded set $K(A)$ such that every corruption of $A$ (with all weights equal to 1) with less than half of the sample will result in a Fermat--Weber point inside $K(A)$.
    Note that the exact value of the weights is irrelevant since gauges are equivariant under scalings; the only relevant thing is that the weights are all equal.
    Now consider $m\in\EH_\gamma(A)$ arbitrary.
    From Theorem~\ref{th:EHnorms}, there exists $c\in\RR^d$ and ${\bar{w}}_c<|A|$ such that $m$ is Fermat--Weber point for the set $A+(c,{\bar{w}}_c)$.
    But this can be seen as a corruption of the set $A$ where all the weights are $1+{\bar{w}}_c/|A|$.
    Thus, $m\in K(A)$.
    Since $m$ was chosen arbitrarily, we have $\EH_\gamma(A)\subset K(A)$
    and so $\EH_\gamma(A)$ is bounded.

    Conversely, assume that $\EH_\gamma(A)$ is bounded for every finite $A\subset\RR^d$.
    From Theorem~\ref{th:EHnorms}, every Fermat--Weber point of a corruption of $(A,w)$ must belong to $\EH_\gamma(D)$ for some subset $D$ of $A$.
    Hence, every corresponding corrupted estimator will belong to $\bigcup_{D\subset A}\EH_\gamma(D)\subset \EH_\gamma(A)$ which is bounded by assumption.
\end{proof}

We already remarked that $\EH_\gamma(A)$ is generally unbounded when $\gamma$ is strictly convex.
In fact, we will show that we need this property only locally.

\begin{definition} \label{def:localStrictConv}
    We call a gauge $\gamma$ \emph{locally strictly convex} at $p\in\partial B_\gamma$ if for every $q\in\partial B_\gamma\setminus\{p\}$ and $\lambda\in]0,1[$ we have $\gamma\left((1-\lambda)p+\lambda q\right)<1$.
\end{definition}

The definition is equivalent to the property that the open segment $]p,q[$ belongs to the interior of $B_\gamma$ for every boundary point $q$ which is different from $p$.
Due to the convexity of $\gamma$ and Jensen’s inequality, it is sufficient to check this property only for $q$ belonging to a neighbourhood of $p$.

\begin{proposition} \label{prop:equivLocStrConv}
    A gauge $\gamma$ is locally strictly convex at $p\in\partial B_\gamma$ if and only if $\partial\gamma^\circ(v)=\{p\}$ for each  $v\in\partial\gamma(p)$, or, equivalently,
    $\gamma^\circ$ is differentiable at every point of $\partial\gamma(p)$.
\end{proposition}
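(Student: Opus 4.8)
The plan is to pass through the standard duality dictionary between a gauge and its polar, reading both conditions as statements about exposed faces of $B_\gamma$. I would begin by recording the relevant identities, all available from the preliminaries: for $p\in\partial B_\gamma$ one has $\partial\gamma(p)=\{r\in\partial B_{\gamma^\circ}\,:\,\langle r,p\rangle=1\}$, and for $r\in\partial B_{\gamma^\circ}$ one has $\partial\gamma^\circ(r)=\{x\in\partial B_\gamma\,:\,\langle r,x\rangle=1\}=F(r)$, the exposed face of $B_\gamma$ in the direction $r$; moreover $\gamma^\circ$ is a finite convex function on $\RR^d$, hence differentiable at a point $r\neq\0$ exactly when $\partial\gamma^\circ(r)$ is a singleton (see, e.g., \cite{FundConvAnal}). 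I would also note the elementary fact that whenever $r\in\partial\gamma(p)$ we automatically have $p\in\partial\gamma^\circ(r)=F(r)$, so these faces are never empty and always contain $p$. Thus the right-hand side of the proposition is literally the assertion that $F(r)=\{p\}$ for every $r\in\partial\gamma(p)$.

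For the implication ``locally strictly convex $\Rightarrow$ differentiability on $\partial\gamma(p)$'' I would fix $r\in\partial\gamma(p)$ and suppose, for contradiction, that $F(r)=\partial\gamma^\circ(r)$ contains some $q\neq p$. Since $\partial\gamma^\circ(r)$ is convex and contained in $\partial B_\gamma$, the whole chord $[p,q]$ lies on $\partial B_\gamma$, so $\gamma((1-\lambda)p+\lambda q)=1$ for all $\lambda\in(0,1)$, contradicting local strict convexity of $\gamma$ at $p$. Hence $\partial\gamma^\circ(r)$ is a singleton and $\gamma^\circ$ is differentiable there; as $r$ was an arbitrary element of $\partial\gamma(p)$, this direction is complete.

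For the converse I would argue by contraposition. If $\gamma$ is not locally strictly convex at $p$, there are $q\in\partial B_\gamma\setminus\{p\}$ and $\lambda\in(0,1)$ with $m:=(1-\lambda)p+\lambda q$ satisfying $\gamma(m)=1$. Pick $r\in\partial\gamma(m)\subseteq\partial B_{\gamma^\circ}$ (nonempty since $m\neq\0$), so that $\langle r,m\rangle=1$. Since $\langle r,p\rangle\le\gamma^\circ(r)\gamma(p)=1$ and likewise $\langle r,q\rangle\le 1$, while $(1-\lambda)\langle r,p\rangle+\lambda\langle r,q\rangle=\langle r,m\rangle=1$ with $\lambda,1-\lambda>0$, we must have $\langle r,p\rangle=\langle r,q\rangle=1$. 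The first equality gives $r\in\partial\gamma(p)$, and both equalities give $p,q\in\partial\gamma^\circ(r)=F(r)$; as $p\neq q$, $\gamma^\circ$ fails to be differentiable at the point $r\in\partial\gamma(p)$. This proves the contrapositive.

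I do not anticipate a real obstacle: the statement is the familiar ``strict convexity of $B_\gamma$ at $p$ $\Leftrightarrow$ smoothness of $\gamma^\circ$ on the normals at $p$'' duality, and once the subdifferential calculus of $\gamma$ and $\gamma^\circ$ is written down correctly the two directions are short. The only points requiring a moment of care are invoking the ``singleton subdifferential $\Leftrightarrow$ differentiable'' criterion for $\gamma^\circ$ as a function on all of $\RR^d$ (not merely along $\partial B_{\gamma^\circ}$), and the routine observation that a supporting hyperplane of $B_\gamma$ meets it only in its boundary, so that $\partial\gamma^\circ(r)\subseteq\partial B_\gamma$.
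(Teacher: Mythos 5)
Your proof is correct and follows essentially the same route as the paper: the forward direction contradicts local strict convexity by exhibiting a second point of the convex set $\partial\gamma^\circ(r)\subseteq\partial B_\gamma$ (the paper uses the midpoint of $[p,q]$, you use the whole chord), and the converse produces a common supporting functional for $p$ and $q$ (the paper via an outer normal to a supporting hyperplane containing the flat segment, you via a subgradient at an interior point of that segment forced to satisfy $\langle r,p\rangle=\langle r,q\rangle=1$). These are only cosmetic variations on the paper's argument.
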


\begin{proof}
    Firstly, assume that $\gamma$ is locally strictly convex at $p\in\partial B_\gamma$ and consider any $v\in\partial\gamma(p)$.
        Assume there exists some $q\in\partial\gamma^\circ(v)$ with $q\neq p$, then $\gamma((p+q)/2)<1$ as $\gamma$ is locally strictly convex at $p$.
    This would imply that ${(p+q)/2}\notin\partial\gamma^\circ(v)$, although $p$ and $q$ are points of $\partial\gamma^\circ(v)$, contradicting the convexity of subdifferentials.
    Consequently, we must have $\partial\gamma^\circ(v)=\{p\}$, 
     which is equivalent to $\gamma^\circ$ being differentiable at $v$.

    Conversely, assume that $\gamma$ is not locally strictly convex at $p$.
    Thus, there is $q\in\partial B_\gamma\setminus\{p\}$ 
    such that the segment $[p,q]$ is contained in the boundary $\partial B_\gamma$.
    Due to the convexity of $B_\gamma$, there exists a vector $u\in\partial B_{\gamma^\circ}$ which is outer normal to a supporting hyperplane of $B_\gamma$ containing $p$ and $q$.
    In particular, $u\in\partial\gamma(p)\cap\partial\gamma(q)$ and  $\{p\}\neq[p,q]\subset\partial\gamma^\circ(u)$.
    We infer that $\gamma^\circ$ is not differentiable at $u\in\partial\gamma(p)$.
\end{proof}

\begin{proposition} \label{prop:unboundedEHLocStrConv}
    If $\gamma$ is a locally strictly convex gauge at $p\in\partial B_\gamma$ and $a,b\in\RR^d$ such that $b-a$ is not parallel to $p$, then $\EH_\gamma(\{a,b\})$ is unbounded.
\end{proposition}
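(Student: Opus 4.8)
The plan is to prove unboundedness by exhibiting an explicit unbounded subset of $\EH_\gamma(\{a,b\})$, namely the open ray $R=\{a+\lambda p:\lambda>0\}$ emanating from $a$ in the direction $p$. Writing $D=\{a,b\}$, it suffices to show that for each $\lambda>0$ the point $x_\lambda:=a+\lambda p$ lies in some \emph{bounded} elementary convex set for $D$: then $R\subseteq\EH_\gamma(D)$ by Definition~\ref{def:EH}, and since $\gamma(x_\lambda-a)=\lambda$ (recall $\gamma(p)=1$ as $p\in\partial B_\gamma$) this forces the elementary hull to be unbounded.

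To build the relevant elementary convex set, fix $\lambda>0$ and pick $p_a\in\partial\gamma(x_\lambda-a)=\partial\gamma(\lambda p)=\partial\gamma(p)$ and $p_b\in\partial\gamma(x_\lambda-b)$; both lie on $\partial B_{\gamma^\circ}$. The subgradient duality $u\in\partial\gamma(y)\Rightarrow y/\gamma(y)\in\partial\gamma^\circ(u)$ gives $x_\lambda-a\in N(p_a)$ and $x_\lambda-b\in N(p_b)$, so $x_\lambda$ lies in the nonempty elementary convex set $C_\pi:=(a+N(p_a))\cap(b+N(p_b))$. Since $b-a$ is not parallel to $p$ we have $x_\lambda-b=\lambda p+(a-b)\neq\0$, hence $x_\lambda\notin\{a,b\}$ and $C_\pi$ is neither $\{a\}$ nor $\{b\}$; so Lemma~\ref{lem:rec_Cpi} applies and reduces the boundedness of $C_\pi$ to the assertion $\partial\gamma^\circ(p_a)\cap\partial\gamma^\circ(p_b)=\emptyset$.

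I would establish this disjointness in two steps. First, since $p_a\in\partial\gamma(p)$ and $\gamma$ is locally strictly convex at $p$, Proposition~\ref{prop:equivLocStrConv} shows $\gamma^\circ$ is differentiable at $p_a$, so $\partial\gamma^\circ(p_a)$ is a single point, which must equal $p$ by duality. Second, I claim $p\notin\partial\gamma^\circ(p_b)$: were it false, then $p_b\in\partial\gamma(p)$, so the same argument would force $\partial\gamma^\circ(p_b)=\{p\}$, and then $p_b\in\partial\gamma(x_\lambda-b)$ would give $(x_\lambda-b)/\gamma(x_\lambda-b)=p$, i.e.\ $\lambda p+(a-b)$ parallel to $p$, i.e.\ $b-a$ parallel to $p$ --- a contradiction. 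Hence $\partial\gamma^\circ(p_a)\cap\partial\gamma^\circ(p_b)=\{p\}\cap\partial\gamma^\circ(p_b)=\emptyset$, so $C_\pi$ is bounded, $x_\lambda\in\EH_\gamma(D)$, and letting $\lambda$ run over $(0,\infty)$ completes the argument.

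The only genuinely load-bearing step is the exclusion $p\notin\partial\gamma^\circ(p_b)$; this is precisely where both hypotheses are consumed --- local strict convexity collapses $\partial\gamma^\circ$ on the face $\partial\gamma(p)$ to the single direction $p$, while the non-parallelism of $b-a$ with $p$ keeps $b$ from ``reaching'' that direction along $x_\lambda-b$. Everything else is routine manipulation of the $\gamma$--$\gamma^\circ$ subgradient duality, and one can even bypass Lemma~\ref{lem:rec_Cpi} by the direct computation $\rec C_\pi=N(p_a)\cap N(p_b)=\RR^+p\cap N(p_b)=\{\0\}$.
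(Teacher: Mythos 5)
Your proof is correct and follows essentially the same route as the paper's: both exhibit the ray $a+\RR^+p$ inside $\EH_\gamma(\{a,b\})$ and both hinge on the same contradiction (if the subgradient at $x_\lambda-b$ lay in $\partial\gamma(p)$, then Proposition~\ref{prop:equivLocStrConv} would force $x_\lambda-b$ to point in direction $p$, contradicting non-parallelism). The only cosmetic difference is that you verify boundedness of the elementary convex set via Lemma~\ref{lem:rec_Cpi} and disjointness of the exposed faces, where the paper directly identifies it as the singleton $\{a+\lambda p\}$; your version is, if anything, slightly more carefully justified.
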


\begin{proof}
    Let $u_\lambda\in\partial\gamma(a+\lambda p-b)\subset\partial B_{\gamma^\circ}$ for an arbitrary $\lambda\geq 0$.
    If $u_\lambda\in\partial\gamma(p)$, then $\partial\gamma^\circ(u_\lambda)=\{p\}$ from Proposition~\ref{prop:equivLocStrConv} since $\gamma$ is locally stricly convex at $p\in\partial B_\gamma$.
    But by equation (\ref{eq:dualsubdiff}) \[(a+\lambda p-b)/\gamma(a+\lambda p-b)\in\partial\gamma^\circ(u_\lambda)\] so we would have \[(a+\lambda p-b)/\gamma(a+\lambda p-b)=p,\]
    implying that $b-a$ is parallel to $p$, contradicting our assumptions.
    Therefore, $u_\lambda\notin\partial\gamma(p)$ and hence 
    $p\notin \partial\gamma^\circ(u_\lambda)$, showing that $p$
    does not lie in the recession cone of $N(u_\lambda)$.
    
    Consider now any $u\in\partial\gamma(p)$, then $a+\lambda p \in a+N(u)=a+\RR^+\partial\gamma^\circ(u)$, which is a half-line, by  Proposition~\ref{prop:equivLocStrConv}, that is parallel to $p$.
    The intersection of the aforementioned half-line with the cone $b+N(u_\lambda)$ is therefore a bounded segment that contains $a+\lambda p$.
    Consequently, $a+\lambda p \in (a+N(u))\cap(b+N(u_\lambda))$, which is a bounded elementary convex set, so  $a+\lambda p\in \EH_\gamma(\{a,b\})$.
    
    $\lambda\geq 0$ being arbitrary, this shows that $a+\RR^+p\subset\EH_\gamma(\{a,b\})$.
\end{proof}

A direct consequence of Corollary~\ref{cor:charUnifRobustNorm} and Proposition~\ref{prop:unboundedEHLocStrConv} is the following.

\begin{corollary} \label{cor:localStricConvNorms}
    Norms that are locally strictly convex at a point are not uniformly robust.
\end{corollary}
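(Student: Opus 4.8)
The plan is to deduce this directly from the two results that precede it: the characterization of uniform robustness in Corollary~\ref{cor:charUnifRobustNorm} and the construction of an unbounded elementary hull in Proposition~\ref{prop:unboundedEHLocStrConv}. By Corollary~\ref{cor:charUnifRobustNorm}, showing that a norm $\gamma$ fails to be uniformly robust amounts to exhibiting a \emph{single} finite set $A\subset\RR^d$ whose elementary hull $\EH_\gamma(A)$ is unbounded, so the whole argument reduces to producing such an $A$.

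First I would fix a point $p\in\partial B_\gamma$ at which $\gamma$ is locally strictly convex, as furnished by hypothesis. Then I would choose two points $a,b\in\RR^d$ with $b-a$ not parallel to $p$ — concretely $a=\0$ together with any $b\notin\RR p$ works. With this choice Proposition~\ref{prop:unboundedEHLocStrConv} applies verbatim and gives $a+\RR^+p\subseteq\EH_\gamma(\{a,b\})$, so the finite set $A:=\{a,b\}$ has unbounded elementary hull. Feeding this back into Corollary~\ref{cor:charUnifRobustNorm} immediately yields that $\gamma$ is not uniformly robust.

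The only delicate point — and the one I would flag explicitly — is the existence of a direction transverse to $p$, i.e. the implicit standing assumption $d\geq 2$: in $\RR^1$ every vector is a multiple of $p$, Proposition~\ref{prop:unboundedEHLocStrConv} has no admissible $a,b$, and indeed Example~\ref{eg:1dimpGauge} shows every one-dimensional gauge is uniformly robust, so the statement is understood for $d\geq 2$. Granting that, the corollary is a one-line combination of the two quoted facts and requires no further computation.
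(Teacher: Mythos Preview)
Your proposal is correct and matches the paper's own treatment exactly: the paper states the corollary as ``a direct consequence of Corollary~\ref{cor:charUnifRobustNorm} and Proposition~\ref{prop:unboundedEHLocStrConv}'' with no further proof, which is precisely the combination you describe. Your explicit remark about the implicit assumption $d\geq 2$ is a welcome clarification that the paper leaves unspoken.
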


The cases considered in Corollaries~\ref{cor:polyhedral} and \ref{cor:localStricConvNorms} do not cover all possible norms, even  in two dimensions, as the following counterexample shows. 
Afterwards we will show that in dimension 2 the remaining gap  may be closed,
thus obtaining a full characterization of planar uniformly robust norms.

\begin{example} \label{ex:strictlyLocalConvSetFromOneSide}
    Take $B_\gamma$ as the convex hull of the points $\pm(1,0)$, $\pm(0,1)$, $\pm( 1/\sqrt{2},- 1/\sqrt{2})$, and the points $\pm v_n$ where $v_n:=\left(\cos(\pi/3n),\sin(\pi/3n)\right)$ for $n\geq 1$, as displayed in Figure~\ref{fig:nonpolyhNotStrictLocalConvexSet}.
    The set $B_\gamma$ is described as the convex hull of a compact set, which we will denote by $V$.
    Indeed, $V$ is both bounded and contains its accumulation points, namely $\pm (1, 0)$.
    Consequently, $B_\gamma$ is a compact convex set containing $\0$ in its relative interior, so it is indeed the unit ball of a norm.

    On the other hand, there is a small neighbourhood of every point of $B_\gamma\setminus\{(\pm 1,0)\}$ such that the restriction of $B_\gamma$ to the neighbourhood coincides with a polyhedral set.
    This is due to the fact that $(\pm 1,0)$ are the only non-isolated accumulation points of the vertex set~$V$.
    But $B_\gamma$ is not polyhedral since there are infinitely many exposed points.

    However, $B_\gamma$ is not locally strictly convex at the points $\pm (1,0)$ because each of them is contained in a 1-dimensional exposed face: the segment between $(1,0)$ and $(1/\sqrt{2},-1/\sqrt{2})$ and its reflection about the origin, respectively.
    Nevertheless, the condition from Definition~\ref{def:localStrictConv} would hold for $(1,0)$ if we restrict to points $q$ having $q_2>0$.
    For this reason, we could say that $B_\gamma$ is ``strictly locally convex from the positive side'' at $(1,0)$.
\end{example}

\begin{figure}
    \centering
    \scalebox{.6} {
    
        \begin{tikzpicture}[scale=5]
    
        \node[circle, draw=black, fill=black, inner sep=1.2, label=below left:{\large $\0$}] (O) at (0,0) {};
    
        \coordinate (Nord) at (0,1) {};
        \coordinate (Sud) at (0,-1) {};
        \coordinate (Est) at (1,0) {};
        \coordinate (Vest) at (-1,0) {};
    
        \coordinate (SE) at ({cos(-45)},{sin(-45)}) {};
        \coordinate (NV) at ({cos(135)},{sin(135)}) {};
    
        \coordinate (P1) at ({cos(60)}, {sin(60)})  {};
        \coordinate (P2) at ({cos(30)}, {sin(30)}) {};
        \coordinate (P3) at ({cos(20)}, {sin(20)}) {};
        \coordinate (P4) at ({cos(15)}, {sin(15)}) {};
        \coordinate (P5) at ({cos(10)}, {sin(10)}) {};
        \coordinate (P6) at ({cos(5)}, {sin(5)}) {};
        \coordinate (P7) at ({cos(2)}, {sin(2)}) {};
    
        \foreach \x in {1,...,7} {
            \coordinate (Q\x) at ($-1*(P\x)$);
        }
    
        \draw (Nord) -- (P1) node[fill=black,inner sep=1,label=right:{\large $v_1$}]{}
        -- (P2) node[fill=black,inner sep=1,label=right:{\large $v_2$}]{}
        -- (P3) node[fill=black,inner sep=1,label=right:{\large $v_3$}]{}
        -- (P4) node[fill=black,inner sep=1,label=right:{\large $v_4$}]{}
        -- (P5) node[fill=black,inner sep=1,label=right:{\large $v_5$}]{}
        -- (P6) node[fill=black,inner sep=1,label=right:{\large $v_6$}]{}
        -- (P7) node[fill=black,inner sep=1,label=right:{\tiny$\vdots$}]{}
        -- (Est) node[fill=white,draw=black,inner sep=1.5,label=below right:{\large $u$}]{}
        -- (SE) -- (Sud) -- (Q1) -- (Q2) -- (Q3) -- (Q4) -- (Q5) -- (Q6) -- (Q7) -- 
        (Vest) -- (NV) -- cycle;
    
        \end{tikzpicture}

    }
    
    \caption{Non-polyhedral convex set described in Example~\ref{ex:strictlyLocalConvSetFromOneSide} which is not strictly locally convex at any point. The point $u$ is an exposed point that is the limit of the exposed points $v_n$, but is also contained in a larger exposed face of the polyhedral set}
    \label{fig:nonpolyhNotStrictLocalConvexSet}
\end{figure}

\begin{proposition} \label{prop:nonPolyhGauge2Dim}
    Let $\gamma$ be a non-polyhedral gauge on $\RR^2$.
    Then there exist ${a,b\in\RR^2}$ such that $\EH_\gamma(\{a,b\})$ is unbounded.
\end{proposition}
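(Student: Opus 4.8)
The plan is to reduce to the case of a locally strictly convex point (from one side) and then rerun the argument of Proposition~\ref{prop:unboundedEHLocStrConv}. Since $\gamma$ is non-polyhedral on $\RR^2$, the boundary $\partial B_\gamma$ is a convex curve that is not a finite union of line segments. I would first argue that $\partial B_\gamma$ must contain a point $p$ that is \emph{exposed} but \emph{not} an endpoint of a maximal boundary segment on at least one side; more precisely, there is $p\in\partial B_\gamma$ and a one-sided neighbourhood (an arc $\{p_t\}_{t\in[0,\varepsilon)}$ of boundary points approaching $p$) along which no nondegenerate segment of $\partial B_\gamma$ passes through $p$. If the boundary had only finitely many ``corner'' points and every maximal arc between consecutive corners were a segment, $\gamma$ would be polyhedral; so the set of points that are either corners or segment-endpoints is infinite or there is a genuinely curved arc. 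In either case one extracts a point $p$ with the desired one-sided strict-convexity behaviour: for a sequence $q_n\to p$ from the chosen side, the open segments $(p,q_n)$ lie in the interior of $B_\gamma$.

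Next I would port the subdifferential translation. By Proposition~\ref{prop:equivLocStrConv}'s proof technique, one-sided local strict convexity at $p$ forces a corresponding one-sided uniqueness of $\partial\gamma^\circ$ on $\partial\gamma(p)$: if $u\in\partial\gamma(p)$ and $q\in\partial\gamma^\circ(u)$ lies on the relevant side with $q\neq p$, then $\gamma((p+q)/2)<1$, contradicting that $[p,q]$ would have to lie in $\partial B_\gamma$. So $\partial\gamma^\circ(u)$ does not extend past $p$ in that direction. Now choose $a,b\in\RR^2$ so that $b-a$ is not parallel to $p$ and moreover so that the ray $a+\RR^+p$ approaches $b$ from the controlled side — concretely, pick $b$ on the side of the line $a+\RR p$ toward which the boundary arc curves. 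Then for $u_\lambda\in\partial\gamma(a+\lambda p-b)$ with $\lambda\ge 0$ large, the normalized vector $(a+\lambda p-b)/\gamma(a+\lambda p-b)\to p$ from that side, and one-sided strict convexity gives $u_\lambda\notin\partial\gamma(p)$ exactly as in Proposition~\ref{prop:unboundedEHLocStrConv}. Hence, with $u$ any point of $\partial\gamma(p)$, the set $(a+N(u))\cap(b+N(u_\lambda))$ is the singleton $\{a+\lambda p\}$, an elementary convex set, so $a+\RR^+p\subset\EH_\gamma(\{a,b\})$ and the elementary hull is unbounded.

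The main obstacle is the first step: making rigorous the dichotomy ``non-polyhedral implies the existence of a one-sidedly strictly convex boundary point.'' A purely strictly convex point need not exist (the example with $B_\gamma=\conv\{(\pm1,0),(0,\pm1),\pm(\sqrt{n^2-1}/n,1/n)\}$ shows this), so I cannot simply invoke Proposition~\ref{prop:unboundedEHLocStrConv}. I would handle it by a compactness/contradiction argument: suppose every $p\in\partial B_\gamma$ fails one-sided strict convexity on \emph{both} sides, i.e. every boundary point is interior to a maximal boundary segment or is a corner of two segments; then $\partial B_\gamma$ is covered by relatively open segments, and by compactness finitely many suffice, forcing $\partial B_\gamma$ to be a finite polygon — contradicting non-polyhedrality. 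This gives a point $p$ with one-sided strict convexity, and everything downstream is the argument above. The only delicate point in that reduction is treating corners correctly (a corner is the endpoint of two segments and has a full-dimensional normal cone), but a corner still admits a one-sided strict-convexity statement unless a segment emanates from it on that side, so the covering argument goes through; alternatively one may directly say that the (closed) set of endpoints of maximal boundary segments together with segment interiors cannot cover $\partial B_\gamma$ unless there are finitely many segments.
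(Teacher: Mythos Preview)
Your overall plan matches the paper's: locate a point $p\in\partial B_\gamma$ that is strictly convex from one side, then rerun the argument of Proposition~\ref{prop:unboundedEHLocStrConv}. Your compactness argument for producing such a $p$ is a legitimate alternative to the paper's ``accumulation point of exposed points'' (and can be made rigorous: an accumulation point of segment endpoints can neither lie in a segment interior nor be a common endpoint of two segments of positive length, contradicting the hypothesis that every boundary point is of one of these two types).

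The gap is in the second step. From $u_\lambda\notin\partial\gamma(p)$ you conclude that $(a+N(u))\cap(b+N(u_\lambda))$ is the singleton $\{a+\lambda p\}$ ``with $u$ any point of $\partial\gamma(p)$''. That inference is valid in Proposition~\ref{prop:unboundedEHLocStrConv} only because full local strict convexity forces $\partial\gamma^\circ(u)=\{p\}$, so $N(u)=\RR^+p$ is a ray; then $p\notin\partial\gamma^\circ(u_\lambda)$ means the ray eventually leaves $b+N(u_\lambda)$, giving a bounded intersection. With only one-sided strict convexity your own argument shows merely $\partial\gamma^\circ(u)\subset\{x_2\le 0\}$, so $N(u)$ can be a genuinely two-dimensional cone (e.g.\ at a stadium-type junction point, where $\partial\gamma(p)$ is a single $u$ and $\partial\gamma^\circ(u)$ is a whole edge). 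In that situation $u_\lambda\notin\partial\gamma(p)$ only rules out the direction $p$ from $N(u)\cap N(u_\lambda)$; it does not by itself rule out the \emph{other} extreme direction of $N(u)$, so boundedness of the elementary convex set (equivalently, $\partial\gamma^\circ(u)\cap\partial\gamma^\circ(u_\lambda)=\emptyset$ by Lemma~\ref{lem:rec_Cpi}) is not established.

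The paper closes exactly this gap by reversing the roles: it bases the potentially two-dimensional cone at $b$ (using $p_b\in\partial\gamma(u)$, contained in $\{x_2\le 0\}$), and forces the cone at $a$ to be a \emph{ray} by choosing $\lambda_n$ so that $(b+\lambda_n u-a)/\gamma(\cdot)$ is an \emph{exposed} point of $B_\gamma$; such $\lambda_n\to\infty$ exist precisely because $u$ was taken as an accumulation point of exposed points. The intersection is then a ray (with positive $x_2$-component) cut against a half-plane $\{x_2\le b_2\}$, hence a bounded segment. Your compactness argument does not supply this exposed-point structure (and your assertion that the $p$ you find is exposed is not justified by it), so as written the boundedness step is incomplete.
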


\begin{proof}
    Since $\gamma$ is non-polyhedral, $B_\gamma$ has infinitely many exposed points.
    These points are contained in a compact set, so they have an accumulation point, $u$.
    The point $u$ belongs to $\partial B_\gamma$ because the boundary is a compact set.
    
    We remark that linear transformations map elementary convex sets with respect to a set $A$ to elementary convex sets with respect to the transformed $A$ in a new normed space.
    For this reason, we make a few simplified assumptions about~$u$.
    More precisely, we assume that $u=(1,0)$, and that $u$ is a limit of exposed points $x$ with $x_2>0$.
    These assumptions could be obtained from a rotation, scaling and, if necessary, a reflection about the $x$-axis.
    Such a situation is depicted in Figure~\ref{fig:nonpolyhNotStrictLocalConvexSet}.

    We may then conclude that for every $a\in\partial B_\gamma\cap\{x\in\RR^2:x_2>0\}$ and $\lambda\in]0,1[$, we have $\gamma((1-\lambda)u+\lambda a)<1$.
    Indeed, if it were not the case, there would exist a point 
    \(q\in]a,u[\) with $\gamma(a)=\gamma(q)=\gamma(u)=1$, which by \cite[Lemma~8]{Plastria-pasting-gauges1} implies
    \([q,u]\subset\partial B_\gamma\cap{\{x\in\RR^2:x_2>0\}}\);
    thus at every boundary point $x$ of $B_\gamma$ with $x_2>0$ that is closer to $u$ than $q$ we have the same unique supporting line
 which contains $[q,u]$, so its intersection with $B_\gamma$ is not equal to $\{x\}$, hence $x$ is not exposed; it would follow that
$u$ would not be a limit of exposed points $x$ with $x_2>0$, contradicting how we defined~$u$.
    
    Consequently, any cone $N(p)$ with $p\in\partial\gamma(u)$ is contained in the half-space $\{{x\in\RR^2}:x_2\leq 0\}$.
    For example, in Figure~\ref{fig:nonpolyhNotStrictLocalConvexSet} there are two possible cones $N(p)$: the ray starting from the origin which contains $u$ and the cone generated by the 1-dimensional face containing $u$.
    In both cases, the cones lie in the half-space of points with non-positive $x_2$-coordinates.

    We consider $a$ arbitrarily and $b$ any point with $b_2>a_2$.
    The set \[\{(b+\lambda u-a)/\gamma(b+\lambda u-a):\lambda\geq 0\}\] is a connected subset of $\partial B_\gamma\cap\{x\in\RR^2:x_2>0\}$ having $u$ as an accumulation point.
    In particular, it contains a sequence of exposed points of $B_\gamma$ converging to~$u$.
    That is, there exists a sequence $(\lambda_n)$ of positive numbers such that $\lambda_n\to\infty$ and \[v_n:=({b+\lambda_n u-a})/\gamma({b+\lambda_n u-a})\] is an exposed point of $B_\gamma$.
    Refer to Figure~\ref{fig:nonpolyhNotStrictLocalConvexSet} for an example.

    \begin{figure}
    \centering
    
    \begin{tikzpicture}[scale=1.2]

    \coordinate (Nord) at (0,1) {};
    \coordinate (Sud) at (0,-1) {};
    \coordinate (Est) at (1,0) {};
    \coordinate (Vest) at (-1,0) {};

    \node[circle,fill=black!70!white,draw=black,inner sep=1.2,label=left:{$a$}] (a) at (0,0) {};

    \coordinate (SE) at ({cos(-45)},{sin(-45)}) {};
    \coordinate (NV) at ({cos(135)},{sin(135)}) {};

    \coordinate (P1) at ({cos(60)}, {sin(60)})  {};
    \coordinate (P2) at ({cos(30)}, {sin(30)}) {};
    \coordinate (P3) at ({cos(20)}, {sin(20)}) {};
    \coordinate (P4) at ({cos(15)}, {sin(15)}) {};
    \coordinate (P5) at ({cos(10)}, {sin(10)}) {};
    \coordinate (P6) at ({cos(5)}, {sin(5)}) {};
    \coordinate (P7) at ({cos(2)}, {sin(2)}) {};

    \foreach \x in {1,...,7} {
        \coordinate (Q\x) at ($-1*(P\x)$);
    }

    \draw (Nord) -- (P1)
    -- (P2) -- (P3) node[fill=black,inner sep=1,label=right:{$a+v_n$}]{} -- (P4) -- (P5) -- (P6) -- (P7)
    -- (Est) node[circle,fill=white,draw=black,inner sep=1.5,label=right:{$a+u$}]{}
    -- (SE) -- (Sud) -- (Q1) -- (Q2) -- (Q3) -- (Q4) -- (Q5) -- (Q6) -- (Q7) -- 
    (Vest) -- (NV) -- cycle;
   \draw[dotted] (Est) -- (1,-1) -- (Sud) (Vest) -- (-1,1) -- (Nord);

    \draw[dotted, name path=rayFromA] (a) -- ($6.5*(P3)$);
\begin{scope}[shift={(-1.8, 2.2)}]
    \coordinate (Nord) at (0,1) {};
    \coordinate (Sud) at (0,-1) {};
    \coordinate (Est) at (1,0) {};
    \coordinate (Vest) at (-1,0) {};

    \node[circle,fill=black!70!white,draw=black,inner sep=1.2,label=left:{$b$}] (b) at (0,0) {};

    \coordinate (SE) at ({cos(-45)},{sin(-45)}) {};
    \coordinate (NV) at ({cos(135)},{sin(135)}) {};

    \coordinate (P1) at ({cos(60)}, {sin(60)})  {};
    \coordinate (P2) at ({cos(30)}, {sin(30)}) {};
    \coordinate (P3) at ({cos(20)}, {sin(20)}) {};
    \coordinate (P4) at ({cos(15)}, {sin(15)}) {};
    \coordinate (P5) at ({cos(10)}, {sin(10)}) {};
    \coordinate (P6) at ({cos(5)}, {sin(5)}) {};
    \coordinate (P7) at ({cos(2)}, {sin(2)}) {};

    \foreach \x in {1,...,7} {
        \coordinate (Q\x) at ($-1*(P\x)$);
    }

    \draw (Nord) -- (P1)
    -- (P2) -- (P3) -- (P4) -- (P5) -- (P6) -- (P7)
    -- (Est) node[circle,fill=white,draw=black,inner sep=1.5,label=below right:{$b+u$}]{}
    -- (SE) -- (Sud) -- (Q1) -- (Q2) -- (Q3) -- (Q4) -- (Q5) -- (Q6) -- (Q7) -- 
    (Vest) -- (NV) -- cycle;

    \draw[dashed, name path=lowerRayFromB] ({5*cos(-45)},{5*sin(-45)}) -- (b);
    \draw[dashed, name path=upperRayFromB] (b) -- (8,0);
    \draw[dotted] (Est) -- (1,-1) -- (Sud) (Vest) -- (-1,1) -- (Nord);

    \path [name intersections={of=rayFromA and lowerRayFromB,by=X}]; 
    \path [name intersections={of=rayFromA and upperRayFromB,by=Y}]; 
    \node [circle,fill=black!50!white,inner sep=1.4,label=below:{$x'$}] at (X) {};
    \node [circle,fill=black!50!white,inner sep=1.4,label=below:{$y'$}] at (Y) {};

\end{scope}

    \end{tikzpicture}
    
    \caption{How the elementary convex set $C_\pi$, from the proof of Proposition~\ref{prop:nonPolyhGauge2Dim}, with respect to $a$ and $b$ can be constructed.
    It depends on a face $F$ of $B_\gamma$, which may be either the singleton $\{u\}$ or a segment with $u$ as one endpoint (dotted case), extending downward.
    The set $C_\pi$ is the single point $\{y'\}$ when $u$ is exposed, and the line segment $[x',y']$ otherwise}
    \label{fig:ExampleProof2dim}
\end{figure}
    
    We will show that $b+\lambda_n u$ is contained in $\EH_{\gamma}(\{a,b\})$ for every $n$.
    Since $\lambda_n\to\infty$ as $n\to\infty$, this implies that $\EH_{\gamma}(\{a,b\})$ is unbounded because it will contain all the points of a ray emanating from $b$ which is parallel to $u$.

    Indeed, let $C_\pi$ an elementary convex set containing $b+\lambda_n u$ in its relative interior.
    In particular, \[C_\pi=\left(a+\RR^+(b-a+\lambda_n u)\right)\cap (b+\RR^+F)\] for an exposed face $F$ of $B_\gamma$ containing $u$.
    We have already shown that $F$ must contain elements with non-positive $x_2$-coordinates, which gives \[b+\RR^+F\subset\{x\in\RR^2:x_2\leq b_2\}.\]
    On the other hand,
    \[a+\mu(b-a+\lambda_n u)\in\{x\in\RR^2:x_2>b_2\}\ \text{ for every }\mu>1\]
    which ensures that these points do not belong to $b+\RR^+F$.
    As a consequence, $C_\pi\subset[a,b+\lambda_n u]$, which is bounded, ensuring that $b+\lambda_n u\in\EH_\gamma(\{a,b\})$.

    A situation as above can be observed in Figure~\ref{fig:ExampleProof2dim}, where $y'=b+\lambda_n u$.
    In that figure, $C_\pi=\{y'\}$ but there may be situations where $u$ is not exposed, e.g. when in Figure~\ref{fig:nonpolyhNotStrictLocalConvexSet} the points $\pm(1/\sqrt{2},-1/\sqrt{2})$ are replaced by $\pm(1,-1)$, as shown in dotted lines in Figure~\ref{fig:ExampleProof2dim}.
    In such cases, $C_\pi$ would be the segment $[x',y']$.
\end{proof}

\begin{corollary}
    A norm on $\RR^2$ is uniformly robust if and only if it is polyhedral.
\end{corollary}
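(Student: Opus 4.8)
The plan is to obtain this corollary as an immediate consequence of the three statements immediately preceding it, so the proof is essentially a two-line combination. For the ``if'' direction I would simply invoke Corollary~\ref{cor:polyhedral}: if $\gamma$ is polyhedral then the Fermat--Weber point is already known to be uniformly robust, and nothing further is needed. For the ``only if'' direction I would argue by contraposition. Assume $\gamma$ is a \emph{non}-polyhedral norm on $\RR^2$. By the previous proposition there exist points $a,b\in\RR^2$ such that $\EH_\gamma(\{a,b\})$ is unbounded. Taking $A=\{a,b\}$ shows that the property ``$\EH_\gamma(A)$ is bounded for every finite $A\subset\RR^2$'' fails, so Corollary~\ref{cor:charUnifRobustNorm} gives that $\gamma$ is not uniformly robust. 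Putting the two directions together yields the claimed equivalence.

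There is no real obstacle left in the corollary itself, since all the substance has been pushed into the preceding results. If I instead had to prove the statement from scratch, the only genuinely delicate step would be the content of the previous proposition: showing that a non-polyhedral norm on $\RR^2$ has an accumulation point $u$ of exposed points of $B_\gamma$ that can be taken to be approached from one side, and then exploiting the dictionary from Section~\ref{sec:unifRobust} between elementary convex sets, subdifferentials of $\gamma^\circ$, and exposed faces of $B_\gamma$. Concretely, one normalizes $u=(1,0)$, notes that local strict convexity of $\gamma$ ``from above'' at $u$ forces every cone $N(p)$ with $p\in\partial\gamma(u)$ into a half-plane $\{x_2\le 0\}$, and then observes that the normalized points $(b+\lambda u-a)/\gamma(b+\lambda u-a)$ sweep out a connected arc of $\partial B_\gamma$ in $\{x_2>0\}$ accumulating at $u$, hence meeting exposed points of $B_\gamma$ along a sequence $\lambda_n\to\infty$. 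For such $\lambda_n$ the elementary convex set containing $b+\lambda_n u$ in its relative interior is the intersection of the ray $a+\RR^+(b-a+\lambda_n u)$ (which lies in $\{x_2>b_2\}$ beyond $b+\lambda_n u$) with a translate $b+\RR^+F$ of the cone over the exposed face $F\ni u$ (which lies in $\{x_2\le b_2\}$), so it is squeezed into the bounded segment $[a,\,b+\lambda_n u]$; being bounded, it belongs to $\EH_\gamma(\{a,b\})$, and letting $n\to\infty$ makes the hull unbounded.

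For the write-up of the corollary I would therefore keep it to a short paragraph: state the ``if'' part via Corollary~\ref{cor:polyhedral}, state the ``only if'' part via the contrapositive using the previous proposition together with Corollary~\ref{cor:charUnifRobustNorm}, and remark that this completes the dichotomy between polyhedral and non-polyhedral norms in the plane.
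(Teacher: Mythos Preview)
Your proposal is correct and matches the paper's approach: the corollary is stated without proof in the paper, being an immediate consequence of Corollary~\ref{cor:polyhedral} for the ``if'' direction and of the immediately preceding proposition combined with Corollary~\ref{cor:charUnifRobustNorm} for the ``only if'' direction, exactly as you outline. Your additional sketch of the proposition's proof is accurate but not needed for the corollary itself.
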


\section{The contamination locus} \label{sec:CL}

Section~\ref{sec:unifRobust} focused on the contamination of a set $D$ to understand the robustness of Fermat--Weber points.
We note that this provides a similar approach to studying breakdown points of finite samples as discussed in \cite[Chapter~11]{Huber1981}.
When $\gamma$ is a norm, we saw that $\EH_\gamma(D)$ contains all possible contaminated Fermat--Weber points when the contamination is below the breakdown point.
The following example shows that $\EH_\gamma(D)$ could be too large in Proposition~\ref{prop:elemSet} in the asymmetric case.

\begin{example} \label{eg:EHlargeQuantile}
    Let $D\subset\RR$ be consisting of $n$ points $d_1<\dots<d_n$ and $C\subset\RR$ any finite (positively) weighted set such that ${\bar{w}}_C<\frac{b}{1-b} w_D$ for some $b\in]0,1/2[$.
    We will use $(C,{\bar{w}})$ to obtain a $\tau$-contamination of $(D,w)$ with $\tau={\bar{w}}_C/({\bar{w}}_C+w_D)<b$, where $b$ is the breakdown point of the $b$-quantile.
    We remind that the corresponding gauge is $\gamma(x)=\max((1-b)x,-bx)$.
    
    In other words, we analyze the case of $b$-quantiles for $(A,w')=(D,w)+(C,{\bar{w}})$, which are points $x\in\RR$ such that \[\sum_{a\in A:\,a\leq x}w'_a\geq bw'_A\quad\text{and}\quad\sum_{a\in A:\,a\geq x}w'_a\geq (1-b)w'_A.\]
    
    Let $k\in\{1,\dots,n\}$ such that \[\sum_{i>k} w_{d_i}\leq \frac{1-2b}{1-b}w_D\quad\text{and}\quad\sum_{i\geq k} w_{d_i}>\frac{1-2b}{1-b}w_D.\]
    Then the $b$-quantile of $A$ belongs to $[d_1,d_k]$.
    Indeed, let $x>d_k$.
    Then the weight of the points in $A$ greater or equal to $x$ is \[\sum_{a\in A:\,a\geq x}w'_a 
    \leq {\bar{w}}_C+\sum_{i>k} w_{d_k}\leq {\bar{w}}_C+\frac{1-2b}{1-b}w_D<(1-b)({\bar{w}}_C+w_D)=(1-b)w'_A,\] where the last inequality comes from ${\bar{w}}_C<\frac{b}{1-b}w_D$ by multiplication with $b$ followed by the addition of $\frac{1-2b}{1-b}w_D$.
    The last relation shows that $x$ cannot be a $b$-quantile for $A$.

    This is a counterexample to the extension of Theorem~\ref{th:EHnorms} to asymmetric gauges when $[d_1,d_k]$ is a strict subset of $\EH_\gamma(D)=[d_1,d_n]$, which happens as soon as ${w_{d_n}\leq\frac{1-2b}{1-b}w_D}$.
\end{example}

Consequently, Theorem~\ref{th:EHnorms} cannot be extended to the asymmetric case.
Example~\ref{eg:EHlargeQuantile} points out that the weights on $D$ play a role in the study of the contamination of $(D,w)$ in the absence of symmetry.
The aforementioned example motivates the study of the set

\[\CL_{\gamma}(D,w):=\bigcup \Big\{ \FW_\gamma(A,w') :
(A,w')=(D,w)+(C,{\bar{w}}),\,\sigma {\bar{w}}_C<w_D
\Big\}\]
which we call the \emph{contamination locus} of the weighted set $(D,w)$.
This is similar to the set of efficient points studied by Durier~\cite{Durier:90}, although here we fix a large part of the sample.

\begin{lemma} \label{lem:one-pointCL}
    Let $x$ be a Fermat--Weber point of a sample $(D,w)+(C,{\bar{w}})$ with $\sigma\cdot {\bar{w}}_C<w_D$.
    Then there exists $e\in\RR^d$ and $w''_e<w_D/\sigma$ such that $x$ is a Fermat--Weber point of $(D,w)+(e,w''_e)$.
\end{lemma}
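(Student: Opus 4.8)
The statement says: if $x$ is a Fermat--Weber point of $(D,w)+(C,w)$ with $\sigma w_C < w_D$, then there is a single point $e$ with weight $w_e < w_D/\sigma$ such that $x$ is still a Fermat--Weber point of $(D,w)+(e,w_e)$. This is the asymmetric analogue of the ``collapse the contamination to one point'' trick used in Theorem~\ref{th:EHnorms} (and implicitly in Example~\ref{eg:3points}): the point of the corrupted part $C$ has no role other than producing a subgradient $q$ of the correct magnitude and direction that cancels the subgradient contribution of $D$ at $x$. So the plan is to read off this ``aggregate subgradient'' and realize it with a single well-placed point.

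\medskip

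\textbf{Step 1: extract the certificate.} Since $\gamma$ is convex, $x$ being optimal for $(D,w)+(C,w)$ is equivalent to $\0 \in \sum_{d\in D} w_d \partial\gamma(x-d) + \sum_{c\in C} w_c \partial\gamma(x-c)$. Hence there is $p \in \sum_{d\in D} w_d \partial\gamma(x-d)$ and $q \in \sum_{c\in C} w_c \partial\gamma(x-c)$ with $p + q = \0$, i.e. $q = -p$. By Lemma~\ref{lem:part1} applied to the set $C$ (with its weights, so the bound is $w_C$), we get $\gamma^\circ(q) = \gamma^\circ(-p) \le w_C$.

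\medskip

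\textbf{Step 2: realize $q$ with one point.} We must produce $e\in\RR^d$ and $w_e>0$ so that $-p \in w_e\,\partial\gamma(x-e)$, because then $\0 = p + (-p) \in \sum_{d\in D} w_d\partial\gamma(x-d) + w_e\partial\gamma(x-e)$, making $x$ a Fermat--Weber point of $(D,w)+(e,w_e)$. If $p=\0$ then $x$ is already optimal for $(D,w)$ alone and we can take $e$ anywhere with tiny weight (or note $x$ is optimal for $(D,w)+(e,w_e)$ trivially for suitable small $w_e$); assume $p\neq\0$. Set $w_e := \gamma^\circ(-p) = \gamma^\circ(q)$, which is in $(0, w_D/\sigma]$ — wait, we only know $w_e \le w_C < w_D/\sigma$ from Step~1 and $\sigma w_C < w_D$, so $w_e < w_D/\sigma$ as required. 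Now $\frac{-p}{w_e}$ has $\gamma^\circ\!\left(\frac{-p}{w_e}\right) = 1$, so it lies on $\partial B_{\gamma^\circ}$. Choose any $t \in \partial\gamma^\circ\!\left(\frac{-p}{w_e}\right)$ (nonempty since $\gamma^\circ$ is a finite gauge), which means $\gamma(t)=1$ and $\left\langle \frac{-p}{w_e}, t\right\rangle = 1$; equivalently $\frac{-p}{w_e} \in \partial\gamma(t)$. Define $e := x - t$. Then $\partial\gamma(x-e) = \partial\gamma(t) \ni \frac{-p}{w_e}$, so $-p \in w_e\,\partial\gamma(x-e)$, which is exactly what we needed. (Here I use the standard biduality $\gamma^{\circ\circ}=\gamma$ and the self-duality of the subgradient relation: $u\in\partial\gamma(t) \Leftrightarrow \gamma^\circ(u)=1,\ \langle u,t\rangle = \gamma(t) \Leftrightarrow t\in\partial\gamma^\circ(u)$ when $\gamma(t)=1$.)

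\medskip

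\textbf{Main obstacle.} The only delicate point is handling the degenerate case $p = \0$ cleanly, and making sure the weight $w_e$ is strictly positive and strictly below $w_D/\sigma$ — both follow from $p\neq\0$ and from $\gamma^\circ(-p)\le w_C < w_D/\sigma$, which is where the hypothesis $\sigma w_C < w_D$ is used. Everything else is a direct unwinding of the subgradient optimality condition together with the duality $u\in\partial\gamma(t)\Leftrightarrow t\in\partial\gamma^\circ(u)$ for $\gamma$-unit $t$; no compactness or limiting argument is needed, in contrast to Lemma~\ref{lem:part2}.
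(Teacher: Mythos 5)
Your construction is exactly the paper's: extract the aggregate subgradient $q=-p$ over $C$ from the first-order optimality condition, bound $\gamma^\circ(q)\le w_C<w_D/\sigma$ via Lemma~\ref{lem:part1}, set $w_e=\gamma^\circ(q)$, and place $e=x-u$ for $u\in\partial\gamma^\circ(q)$ (your $t\in\partial\gamma^\circ(-p/w_e)$ is the same set, by invariance of the subdifferential under positive scaling). The only slip is the degenerate case $p=\0$: you cannot take $e$ ``anywhere'' with tiny weight --- if $\sum_{d\in D}w_d\partial\gamma(x-d)=\{\0\}$ (e.g.\ a smooth norm at a point off $D$), then for any $e\neq x$ and any $w_e>0$ the condition $\0\in\sum_d w_d\partial\gamma(x-d)+w_e\partial\gamma(x-e)$ fails, since every element of $\partial\gamma(x-e)$ has dual gauge $1$. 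The paper's choice is $e=x$, which works because $\partial\gamma(\0)=B_{\gamma^\circ}\ni\0$; with that one-line fix your argument coincides with the paper's proof.
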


\begin{proof}
    From the first-order optimality condition for convex minimization, we have \[\0\in\sum_{c\in C}{\bar{w}}_c\partial\gamma(x-c)+\sum_{d\in D}{\bar{w}}_c\partial\gamma({x-d}).\]
    Hence, there exists \[q\in\sum_{c\in C}{\bar{w}}_c\partial\gamma({x-c})\ \text{ such that }-q\in\sum_{d\in D}w_d\partial\gamma({x-d}).\]

    If $q=\0$, then we can choose $e=x$ and $w''_e$ any value smaller than $w_D/\sigma$.
    Otherwise, let \[w''_e=\gamma^\circ(q)\leq {\bar{w}}_C<w_D/\sigma\]
    and set $e=x-u$ for some $u\in\partial\gamma^\circ(q)$.
    Since $x-e=u\in\partial\gamma^\circ(q)$ and $q/w''_e\in\partial B_{\gamma^\circ}$, we also have $q/w''_e\in\partial\gamma(x-e)$.
    Thus \[\0=q+(-q)\in w''_e\partial\gamma(x-e)+\sum_{d\in D}w_d\partial\gamma(x-d).\qedhere\]
\end{proof}

In other words, Lemma~\ref{lem:one-pointCL} says that
\[\CL_{\gamma}(D,w)=\bigcup \Big\{ \FW_\gamma(A,w') :
(A,w')=(D,w)+(e,w''_e),\,e\in\RR^d,\,0<w''_e<w_D/\sigma
\Big\}.\]
We use this description to obtain more information about the contamination locus.

\begin{proposition} \label{prop:propertiesCL}
    The contamination locus $\CL_\gamma(D,w)$ is a connected set such that $\FW_\gamma(D,w)\subset\CL_\gamma(D,w)\subset\EH_\gamma(D)$.
    What is more, the contamination locus is a union of elementary convex sets for $D$.
\end{proposition}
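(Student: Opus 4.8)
The plan is to route everything through a single description of membership in the contamination locus. By Lemma~\ref{lem:one-pointCL} and a routine converse to its proof, a point $x$ lies in $\CL_\gamma(D,w)$ if and only if there is a family $(p_d)_{d\in D}$ with $p_d\in\partial\gamma(x-d)$ such that $\gamma^\circ\bigl(-\sum_{d\in D}w_d p_d\bigr)<w_D/\sigma$: the forward direction is the first-order optimality condition for $(D,w)+(e,w_e)$ together with the bound $\gamma^\circ(q)\le w_e<w_D/\sigma$ from that proof, and for the converse one takes $e=x-u$ with $u\in\partial\gamma^\circ\bigl(-\sum_d w_d p_d\bigr)$ and $w_e=\gamma^\circ\bigl(-\sum_d w_d p_d\bigr)$ (or $e=x$ and arbitrary small $w_e$ when that value is $0$), exactly as in Lemma~\ref{lem:one-pointCL}. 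Granting this, $\FW_\gamma(D,w)\subseteq\CL_\gamma(D,w)$ is immediate, since for $x^\star\in\FW_\gamma(D,w)$ one may pick $p_d\in\partial\gamma(x^\star-d)$ with $\sum_d w_d p_d=\mathbf 0$, whence $\gamma^\circ(\mathbf 0)=0<w_D/\sigma$ (equivalently: placing a point of negligible weight at $x^\star$ keeps $x^\star$ optimal because $\partial\gamma(\mathbf 0)=B_{\gamma^\circ}$).

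Next I would extract the cell decomposition, which also reproves $\CL_\gamma(D,w)\subseteq\EH_\gamma(D)$. Fix $x\in\CL_\gamma(D,w)$ together with a witnessing family $\pi=(p_d)_{d\in D}$, and consider the elementary convex set $C_\pi=\bigcap_{d\in D}(d+N(p_d))$. Then $x\in C_\pi$, since $p_d\in\partial\gamma(x-d)$ translates by duality into $x\in d+N(p_d)$ for every $d$; moreover for every $y\in C_\pi$ one likewise has $p_d\in\partial\gamma(y-d)$ for all $d$, so the very same $\pi$ witnesses $y\in\CL_\gamma(D,w)$, giving $C_\pi\subseteq\CL_\gamma(D,w)$. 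Finally $C_\pi$ is bounded: this is trivial if $C_\pi=\{d\}$ for some $d$; otherwise every $\gamma^\circ(p_d)=1$, and if $\bigcap_{d\in D}\partial\gamma^\circ(p_d)$ contained a vector $r$ (necessarily with $\gamma(r)=1$), then $\langle\sum_d w_d p_d,r\rangle=\sum_d w_d\langle p_d,r\rangle=w_D$, whereas the generalized Cauchy--Schwarz inequality and $\gamma(-r)\le\sigma$ yield $w_D=\langle-\sum_d w_d p_d,-r\rangle\le\gamma^\circ\bigl(-\sum_d w_d p_d\bigr)\,\gamma(-r)<(w_D/\sigma)\,\sigma=w_D$, a contradiction; hence $\bigcap_d\partial\gamma^\circ(p_d)=\emptyset$ and $C_\pi$ is bounded by Lemma~\ref{lem:rec_Cpi}. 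Thus $\CL_\gamma(D,w)$ equals the union of those bounded elementary convex sets for $D$ that it contains, and in particular lies in $\EH_\gamma(D)$.

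It remains to prove connectedness, and I expect this to be the delicate point. I would use the reformulation $\CL_\gamma(D,w)=\bigcup_{e\in\RR^d,\ t\in(0,w_D/\sigma)}\FW_\gamma((D,w)+(e,t))$ stated just after Lemma~\ref{lem:one-pointCL}. For a fixed $e$, the map $t\mapsto Z^e_t:=\FW_\gamma((D,w)+(e,t))$ on $[0,w_D/\sigma)$ takes nonempty, compact, convex values (minimizers of a coercive convex function) and is outer semi-continuous, since the objective is jointly continuous in $(t,x)$ and all minimizers for $t$ in a compact range stay in a fixed bounded set. For such a set-valued map the graph $G^e=\{(t,x):x\in Z^e_t\}$ is connected: in any decomposition of $G^e$ into two parts that are closed and open in $G^e$, each fibre $Z^e_t$, being convex hence connected, lies in a single part, and the sets of $t$ for which $Z^e_t$ lies in the first, respectively the second, part are both open---by outer semi-continuity together with compactness of the fibres---so one of them is empty because $[0,w_D/\sigma)$ is connected. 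Consequently $\bigcup_{t}Z^e_t$, the image of $G^e$ under projection, is connected; it is contained in $\CL_\gamma(D,w)$ (the slices with $t>0$ by definition of $\CL_\gamma$, the slice $t=0$ by the first paragraph) and contains $Z^e_0=\FW_\gamma(D,w)$. Since $\FW_\gamma(D,w)\neq\emptyset$ and every point of $\CL_\gamma(D,w)$ sits in one of these connected subsets, $\CL_\gamma(D,w)$ is a union of connected sets through a common point and is therefore connected. The cell decomposition drops out of the characterization with little effort; the real care is needed in pinning down the outer semi-continuity and the connectedness of the graph $G^e$.
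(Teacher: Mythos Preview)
Your proof is correct and genuinely different from the paper's in both major parts.

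For the inclusions and the cell decomposition, you build everything on the subgradient characterization $x\in\CL_\gamma(D,w)\Leftrightarrow\exists\,p_d\in\partial\gamma(x-d)$ with $\gamma^\circ\bigl(-\sum_d w_d p_d\bigr)<w_D/\sigma$, and this is a clean organizing device: the same witness $\pi$ certifies every $y\in C_\pi$, and boundedness of $C_\pi$ drops out of Lemma~\ref{lem:rec_Cpi} via the Cauchy--Schwarz contradiction. The paper instead quotes Proposition~\ref{prop:elemSet} for $\CL_\gamma(D,w)\subseteq\EH_\gamma(D)$, and for the cell structure takes the \emph{minimal} elementary set through $x$, uses that $\partial g$ is constant on its relative interior, and translates the contaminating point from $e$ to $e+y-x$; your argument avoids the relative-interior bookkeeping at the cost of re-deriving part of Proposition~\ref{prop:elemSet}.

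For connectedness, the paper works on the full parameter space $\RR^d\times(0,w_D/\sigma)$, invokes Berge's maximum theorem (using the compact constraint sets $F(e,w_e)$ from~\eqref{eq:defKappa}) to get upper semi-continuity of $(e,w_e)\mapsto\FW_\gamma((D,w)+(e,w_e))$, and then cites Hiriart-Urruty's theorem that such maps send connected sets to connected sets. You instead fix $e$, extend the weight interval to include $t=0$ so that every fibre family $\{Z^e_t\}_{t\in[0,w_D/\sigma)}$ passes through the common set $\FW_\gamma(D,w)$, and hand-roll the connectedness of the graph from outer semi-continuity and compact fibres. Your route is more elementary and self-contained (no external citations), while the paper's is shorter once one accepts Berge and Hiriart-Urruty. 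The inclusion of $t=0$ is exactly what makes the ``union through a common point'' step work and is the key idea that lets you avoid varying $e$ continuously.
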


\begin{proof}
    The inclusion $\FW_\gamma(D,w)\subset\CL_\gamma(D,w)$ comes from setting $e$ to be any point of $\FW_\gamma(D,w)$ and the inclusion $\CL_\gamma(D,w)\subset\EH_\gamma(D)$ comes from Proposition~\ref{prop:elemSet}.

    To prove that the contamination locus is a union of elementary convex sets for $D$, we select $x\in\CL_\gamma(D,w)$ and $C_\pi$ the smallest elementary convex set containing $x$, which must be bounded since $x\in\EH_\gamma(D)$. Note also that $x$ must lie in the relative interior of $C_\pi$, since the relative interiors of elementary convex sets partition $\RR^d$.
    According to Lemma~\ref{lem:one-pointCL}, there exists $e\in\RR^d$ and $w''_e<w_D/\sigma$ such that \[\0\in\partial g(x)+w''_e\partial\gamma({x-e}),\]
    where $g(x)=\sum_{d\in D}w_d\gamma(x-d)$ as defined in section~\ref{sec:upperBound}.
    
    We know that $\partial g$ is constant over the relative interior of $C_\pi$ and is a larger set at points in the relative boundary of $C_\pi$.
    Therefore, for every $y\in C_{\pi}$ we have \[\partial g(y)+w''_e\partial\gamma(y-(e+y-x))\supseteq\partial g(x)+w''_e\partial\gamma(x-e)\ni\0.\]
    Consequently, adding $(e+y-x,w''_e)$ to $(D,w)$ makes $y$ a Fermat--Weber point.
    In other words, $y\in\CL_\gamma(D,w)$ for every $y\in C_{\pi}$.
    This proves the last statement of the proposition.

    We prove the connectedness of $\CL_\gamma(D,w)$ using set-valued analysis.
    Consider the continuous function $f:\RR^d\times\RR^d\times]0,w_D/\sigma[\to\RR$ defined by \[f(x,e,w''_e)=\sum_{d\in D}w_d\gamma({x-d})+w''_e\gamma({x-e}),\] which is a parametric family of objective functions for the Fermat--Weber problem of contaminations of $(D,w)$.
    Fixing $a^\star\in\FW_\gamma(D,w)$ and seeing $(D,w)+(e,w''_e)$ as the corruption of $(D,w)+(a^\star,w''_e)$, we know that $f(\cdot,e,w''_e)$ attains its minimum on the set
    \[F(e,w''_e)=\left\{x\in\RR^d:\gamma(a^\star-x)\leq\frac{w_D}{w_D-\sigma w''_e}\sigma(1+\sigma)M\right\}\]
    from \eqref{eq:quantitativeRobustness} where $M=\max_{d\in D}\gamma(a^\star-d)$.
    This is the ball $a^\star-r(w''_e)B_\gamma$, where $r$ is a continuous and increasing function on $]0,w_D/\sigma[$.
    In particular, $F$ is a continuous set-valued function with compact values.
    Thus, the set-valued function \[\Theta(e,w''_e):=\argmin_{x\in F(e,w''_e)}f(x,e,w''_e)\] is upper semi-continuous from Berge's maximum theorem \cite[p.~116]{Berge:topological}.
    We note that its image is \[\Theta\left(\RR^d\times]0,w_D/\sigma[\right)=\CL_\gamma(D,w).\]
    Since $\Theta$ is upper semi-continuous and compact-valued, \cite[Theorem~3.1]{Hiriart-Urruty:85} entails that the image of connected sets are also connected.
    This shows that $\CL_\gamma(D,w)$ is connected.
\end{proof}

\begin{example} \label{eg:quantileCL}
    Let us compute $\CL_b(D,w):=\CL_{\gamma_b}(D,w)$ for a weighted set $D=\{{d_1,\dots,d_n}\}\subset\RR$, where $d_1<\dots<d_n$, and the gauge $\gamma_b(x)=\max(({1-b})x,-bx)$ defining the $b$-quantile.
    In other words, $\CL_b(D,w)$ is the contamination locus of the $b$-quantile of $(D,w)$.
    For simplicity, we assume $b\in]0,1/2[$.

    Let $k\in[n]$ such that \[\sum_{i>k}w_{d_i}\leq\frac{1-2b}{1-b}w_D\ \text{ and }\ \sum_{i\geq k}w_{d_i}>\frac{1-2b}{1-b}w_D.\]
    We showed in Example~\ref{eg:EHlargeQuantile} that $\CL_b(D,w)\subset[d_1,d_k]$.
    In fact, we will show that we have equality.
    Since the contamination locus is connected, it is sufficient to show that $d_1$ and $d_k$ belong to $\CL_b(D,w)$

    We can show that $d_1\in\CL_b(D,w)$ by considering \[e<d_1\ \text{ and }\ w''_e\geq \frac{b}{1-b}w_D-\frac{w_{d_1}}{1-b}.\]
    Similarly, to show $d_k\in\CL_b(D,w)$, we have to set \[e>d_k\ \text{ and }\ w''_e\geq\frac{b}{1-b}w_D-\frac{1}{b}\sum_{i\geq k}w_{d_i}.\]
    In either case, $d_1$ or $d_k$ is a $b$-quantile for $(D,w)+(e,w''_e)$.
\end{example}

    \begin{figure}
    \centering

    \begin{tabular}{p{0.4\textwidth}p{0.4\textwidth}}
        \resizebox{0.3\textwidth}{!}{
            \begin{tikzpicture}[scale=1.3,x={(0.866,-0.5)},y={(0,1)},z={(-0.866,-0.5)}]
            \draw[semithick] (-1,-1) -- (1,0) -- (0,1) -- cycle;
    
            \draw[very thick,->] (0,0) -- (0.5,0.5);
            \draw[very thick,->] (0,0) -- (-0.5,0);
            \draw[very thick,->] (0,0) -- (0,-0.5);
    
            \node[circle,fill,inner sep=1pt] at (0,0) {};
        
            \node at (0,-0.7) [circle, draw= white, fill = white, semithick, inner sep = 1.5pt] {\footnotesize a};
            \node at (0.7,0.7) [circle, draw= white, fill = white, semithick, inner sep = 1.5pt] {};
            \node at (-0.7,0) [circle, draw= white, fill = white, semithick, inner sep = 1.5pt] {};
        
        \end{tikzpicture}
        } & \resizebox{0.3\textwidth}{!}{
            \begin{tikzpicture}[scale=1.3,x={(0.866,-0.5)},y={(0,1)},z={(-0.866,-0.5)}]
        \filldraw[fill=black!20!white, draw=black!50!white, semithick] (0,1) -- (1,1) -- (1,0) -- (0,-1) -- (-1,-1) -- (-1,0) -- cycle;

        \draw[ultra thick] (0,0) -- (1,0);
        \draw[ultra thick] (0,0) -- (0,1);
        \draw[ultra thick] (0,0) -- (-1,-1);

        \node at (0,0) [circle, draw= black, fill = black, semithick, inner sep = 1.3pt] {};
        
        \node at (0,1) [circle, draw= black, fill = white, semithick, inner sep = 1.5pt, label={[black]90:\footnotesize$1$}] {};
        \node at (1,1) [circle, draw= black, fill = white, semithick, inner sep = 1.5pt, label={[black]10:\footnotesize$3$}] {};
        \node at (1,0) [circle, draw= black, fill = white, semithick, inner sep = 1.5pt, label={[black]-10:\footnotesize$1$}] {};
        \node at (0,-1) [circle, draw= black, fill = white, semithick, inner sep = 1.5pt, label={[black]below:\footnotesize$3$}] {};
        \node at (-1,-1) [circle, draw= black, fill = white, semithick, inner sep = 1.5pt, label={[black]260:\footnotesize$1$}] {};
        \node at (-1,0) [circle, draw= black, fill = white, semithick, inner sep = 1.5pt, label={[black]170:\footnotesize$3$}] {};
    \end{tikzpicture}
        } \\
        \small (a) Equilateral triangle with skewness directions & \small (b) Six points (white) with adjoining weights, their elementary hull (grey), and contamination locus (black)
    \end{tabular}
    
        \caption{The unit ball of the gauge distance is displayed in (a), whereas (b) contains the values described in Example~\ref{eg:CL-simplex}}
        \label{fig:CL}
    \end{figure}

\begin{example} \label{eg:CL-simplex}
    Consider a gauge $\gamma$ whose unit ball is an equilateral triangle $\triangle$, as in Figure~\ref{fig:CL}~(a).
    Let $D$ be the set consisting of the vertices of $\triangle$, each receiving weight $1$, and their opposites, each having weight $3$; they are coloured white in Figure~\ref{fig:CL}~(b).
    The origin is the unique Fermat--Weber point of $D$.
    
    The elementary hull $\EH_\gamma(D)$ coincides with the convex hull of $D$ and is coloured grey in Figure~\ref{fig:CL}~(b).
    The contamination locus $\CL_\gamma(D,w)$ is a strict subset of $\EH_\gamma(D)$ and consists of the union of three segments with one endpoint at the origin.
    The contamination locus is drawn with black in Figure~\ref{fig:CL}~(b) and includes also the three points of $D$ that are endpoints of the aforementioned segments.
    Computational details are given in Appendix~\ref{sec:CL_trop_med}.
\end{example}

\section{Majority rules} \label{sec:maj-rule}

Majority rules in location theory constitute a rich topic: initiated for the Fermat--Weber problem by Witzgall~\cite{Witzgall:TechnicalReport:1964} (see also~\cite{Plastria2022} for a recent view), with possibly negative weights in \cite{Plastria-majFW}, somewhat extended for a majority region \cite{Goldman+Witzgall:1970}, in problems with mixed distances \cite{BLM:2009, Plastria2011,Plastria:2009}, and for problems with several interconnected facilities to be located \cite{Fliege1997, Lefebvre1991, Plastria2024}.

This section will briefly discuss the relation between breakdown points and majority rules in location theory.
Such connections were already noted in \cite[\textsection 5]{Griffith+Chun+Kim:2022}, where Witzgall's majority rule is explained as follows:
    \begin{quote}
        In other words, for the global spatial median case, if the weight at any location is more than half of the total sum of all $n$ weights, then the spatial median collocates with that demand point.
        In this context, weights function in a fashion similar to frequencies of observations in classical statistics.
        [Witzgall's majority rule] relates to the breakdown point of a median, which is 50\%.
    \end{quote}

In fact, the above observation extends also for the asymmetric case.
The generalization was previously obtained by Plastria~\cite{Plastria-majFW} with more elementary approaches, but we give below a different proof based on our robustness study.

\begin{theorem}[Plastria]\label{th:majority}
    If $\sigma$ is the skewness of $\gamma$ and there is $u\in A$ such that $w_u\geq\sigma\cdot w_{A\setminus\{u\}}$, then $u\in\FW_\gamma(A,w)$.
    What is more, if $w_u>\sigma\cdot w_{A\setminus\{u\}}$, then we actually have $\FW_\gamma(A,w)=\{u\}$.
\end{theorem}

\begin{proof}
    Firstly, assume that $w_u>\sigma\cdot w_{A\setminus\{u\}}$ and we apply Proposition~\ref{prop:elemSet} for $C=A\setminus\{u\}$.
    Consequently, all Fermat--Weber points belong to $\EH_\gamma(\{u\})=\{u\}$ which shows that $\FW_\gamma(A,w)=\{u\}$ when $w_u>\sigma\cdot w_{A\setminus\{u\}}$.

    In other words, we showed that $\sum_{a\in A}w_a\gamma(u-a)<\sum_{a\in A}w_a\gamma(x-a)$ for every $x\in\RR^d\setminus\{u\}$ provided that $w_u>\sigma\cdot w_{A\setminus\{u\}}$.
    Passing to the limit $w_u\to\sigma\cdot w_{A\setminus\{u\}}$, we obtain $\sum_{a\in A}w_a\gamma(u-a)\leq\sum_{a\in A}w_a\gamma(x-a)$ for every $x\in\RR^d\setminus\{u\}$ when $w_u=\sigma\cdot w_{A\setminus\{u\}}$.
    Put differently, $u$ is a Fermat--Weber point.
    This concludes the fact that $u\in\FW_\gamma(A,w)$ whenever $w_u\geq\sigma\cdot w_{A\setminus\{u\}}$.
\end{proof}

Witzgall's majority theorem addresses the symmetric case ($\sigma=1$). Specifically, it states that $a$ is a Fermat--Weber point when $w_a \geq w_{A\setminus\{a\}}$, though this point may not be unique in case of equality.
For instance, consider the median of two distinct real numbers.

In the context of consensus methods, \cite{Comaneci:2024} uses breakdown point analysis to establish other types of majority rules for phylogenetic trees. For certain properties (P), if the proportion of trees with property (P) exceeds $\sigma/(1+\sigma)$, then the Fermat--Weber point also exhibits property (P). Such properties (P) that are listed in \cite{Comaneci:2024} are tied to tropical convex hulls which are specific cases of elementary hulls. This situation arises in the case of simplicial gauges.

More specifically, Proposition~\ref{prop:elemSet} allows extending majority rules to sample properties: if every point in a weighted subsample $(B,{\bar{w}})$ possesses property (P) and ${\bar{w}}_B > \sigma \cdot (w_A-{\bar{w}}_B)$, then every point in $\FW_\gamma(A,w)$ inherits property (P).
This occurs when the set
\begin{equation} \label{eq:genericPset}
    \{x\in\RR^d:x\text{ has property (P)}\}
\end{equation}
is closed under elementary hulls.
This scenario appears in \cite[Proposition~29]{Comaneci:2024} when discussing Pareto properties of certain location statistics for phylogenetic trees.
In this case, the simplicial gauge distance ensures that the elementary hull coincides with the tropical convex hull.
Tropically convex sets, i.e. those closed under tropical convex hulls, of type \eqref{eq:genericPset} are listed in \cite[\textsection 4.3]{Comaneci-PhD}.
The aforementioned sets play a crucial role in the study of consensus methods in phylogenetics.

A challenge arises because other methods in \cite{Comaneci:2024} lack a clear connection to tropical convex hulls, as the elementary hulls involved are distinct. As a result, contamination loci need to be considered with thresholds higher than $\sigma/(1+\sigma)$.

However, elementary hulls do not generally form a closure operator~\cite[\textsection 2.1.2]{Ganter+Obiedkov:2016}.
Identifying sets closed under elementary hulls, particularly in non-simplicial cases, remains a difficult task.
A deeper understanding of contamination loci could facilitate the extension of majority rules.

\section{Final remarks} \label{sec:conclusion}

In this paper we have derived the robustness of the Fermat--Weber problem for any gauge distance.
Nevertheless, many practical situations involve other distances~\cite{Deza+Deza-distances} and it is of interest to see if similar results would hold for more general settings.
In particular, it would provide measures of robustness in the computations of medians and clustering algorithms.

Some indication in this direction exists due to the fact that the lower bound theorem~\ref{th:lowerBound} was proven without the complex machinery of convex analysis, but using only continuity of the distance and triangle inequality with equality on segments.
This latter property was recently used to define segmented metrics in \cite{Plastria2022}.
We remark that Funk distances are also common quasi-metrics which are linear along segments; see \cite[Chapter~2]{handbook-hilbert-geometry}.
Some of the results of this paper might, therefore, extend to such quasi-metrics with finite skewness.

We also conjecture that the computations of breakdown points of M-estimators reduces to the Fermat--Weber case.
Recall that an \emph{M-estimator} arises as the minimum of a function of the form
\[x\mapsto\sum_{a\in A}\rho(x-a)\]
where $A$ is the given sample and $\rho$ is a fixed function.
The Fermat--Weber case arises when $\rho$ is a gauge.

Our analysis does not extend directly, as we lack the invariance of subdifferentials under scaling or linearity over segments.
However, the breakdown occurs when some part of $A$ is arbitrarily corrupted, so we have a movement to ``infinity.''
Such asymptotic behaviour is encoded by the \emph{horizon function} $\rho^\infty$ as defined in \cite[\textsection 3.C]{VarAnal}.
We remark that this is a gauge and we conjecture that the corresponding Fermat--Weber point has the same breakdown point with our M-estimator.
We remark that this conjecture holds for the 1-dimensional case, according to the analysis in \cite[\textsection 3.2.1]{Maronna2018}.

We note that the practical identification of corrupted versus uncorrupted subsets (such as the sets $C$ and $D$ considered in section~\ref{sec:upperBound}) is closely related to the problem of \emph{outlier detection}~\cite{Rousseeuw+Leroy:1987,Rousseeuw+Hubert-anomaly-detection:2018}.
Exploring such applications is a promising direction for future work and would likely require incorporating robust measures of dispersion alongside location.
Nevertheless, we note that this connection is not straightforward: for example, certain cases of our Fermat--Weber estimators, such as M-quantiles, have been found less effective for outlier detection~\cite{chakroborty2024usemquantilesoutlierdetection}.

We saw that the uniform robustness is related to the boundedness of the elementary hull.
This is easy to check for polyhedral or locally strictly convex norms, but our approach does not extend to more general norms, e.g. cylindrical.
In any case, we give the following conjecture which holds in dimension at most two.

\begin{conjecture} \label{conj:unifRobustNorm}
    A norm is uniformly robust if and only if it is polyhedral.
\end{conjecture}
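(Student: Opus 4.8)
The plan is to approach the conjecture through Corollary~\ref{cor:charUnifRobustNorm}, which reduces uniform robustness of a norm $\gamma$ to the boundedness of $\EH_\gamma(A)$ for every finite $A\subset\RR^d$. Since Corollary~\ref{cor:polyhedral} already supplies the ``if'' direction, the entire content is the contrapositive of the ``only if'' direction: \emph{every non-polyhedral norm $\gamma$ admits a finite set $A$ with $\EH_\gamma(A)$ unbounded}. The strategy, generalizing the two-dimensional proof, is to manufacture such a set from the asymptotics of the exposed points of $B_\gamma$. Because a convex body with only finitely many exposed points is a polytope (Straszewicz's theorem), $B_\gamma$ has infinitely many exposed points, and by compactness they accumulate at some $u\in\partial B_\gamma$; after an affine change of coordinates one may normalize $u$ together with a supporting functional at it.

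First I would test the cheapest candidate $A=\{a,b\}$ with $a=\0$ and $b$ chosen generically. The curve $\lambda\mapsto (b+\lambda u)/\gamma(b+\lambda u)$ runs along $\partial B_\gamma$ towards $u$, and for generic $b$ it meets a sequence of exposed points $\bar x_n=(b+\lambda_n u)/\gamma(b+\lambda_n u)$ with $\lambda_n\to\infty$. For each $n$, being exposed forces $\partial\gamma^\circ(p_n)=\{\bar x_n\}$ for any $p_n\in\partial\gamma(\bar x_n)$, so $N(p_n)=\RR^+\bar x_n$ is a single ray, while the exposed face $F$ of $B_\gamma$ in the direction of some $q\in\partial\gamma(u)$ is disjoint from $\bar x_n$; by Lemma~\ref{lem:rec_Cpi} the elementary convex set $C_{\pi_n}=(a+N(p_n))\cap(b+\RR^+F)$ is then bounded. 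Exactly as in the two-dimensional argument one checks $C_{\pi_n}\subseteq[a,\,b+\lambda_n u]$ and that $b+\lambda_n u$ lies in its relative interior, whence $b+\lambda_n u\in\EH_\gamma(\{a,b\})$ for all $n$, so $\EH_\gamma(\{a,b\})$ is unbounded.

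The main obstacle — and the reason this remains a conjecture — is the case in which the exposed points accumulate \emph{along a face $F$ of $B_\gamma$ of positive dimension}, as happens for cylindrical norms such as $B_\gamma=\{x\in\RR^3:x_1^2+x_2^2\le 1,\ |x_3|\le 1\}$, whose exposed points form two circles bounding flat $2$-dimensional faces. Then the chords joining nearby exposed points lie inside $F\subseteq\partial B_\gamma$ rather than in the interior of $B_\gamma$, the dual subdifferential at the relevant normals contains a whole segment, and the intersection $(a+N(p_n))\cap(b+\RR^+F)$ above ceases to be bounded, so no two-point set works directly. To push through one would need either a cleverer, larger finite set $A$ whose extra points ``cap off'' the flat directions of $F$ while the transverse non-polyhedral curvature still produces an unbounded chain of bounded elementary convex sets, or an induction on $d$: pass to the affine span of the accumulation face together with one transverse direction to obtain a lower-dimensional non-polyhedral section, invoke the statement there, and lift an unbounded essential hull back to $\RR^d$ — the delicate point being to guarantee that elementary convex sets which are bounded in the section stay bounded after the lift. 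Handling the cylindrical-type norms, which are precisely where both the local-strict-convexity argument of Proposition~\ref{prop:unboundedEHLocStrConv} and the one-sided argument of the $\RR^2$ case break down, is the crux.
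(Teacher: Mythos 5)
The statement you were asked to prove is stated in the paper as a \emph{conjecture}, and the paper offers no proof of it: it only establishes the ``if'' direction (Corollary~\ref{cor:polyhedral}), the reduction of uniform robustness to boundedness of $\EH_\gamma(A)$ (Corollary~\ref{cor:charUnifRobustNorm}), and the ``only if'' direction in the special cases of locally strictly convex norms (Proposition~\ref{prop:unboundedEHLocStrConv}) and of arbitrary norms on $\RR^2$. Your proposal accurately reassembles exactly these partial results and then, candidly, stops at the same obstruction the authors themselves name in Section~\ref{sec:conclusion}: norms whose exposed points accumulate along a positive-dimensional face, the cylindrical norms being the prototype. So there is a genuine gap, and it is the gap that makes the statement a conjecture rather than a theorem --- your two proposed escape routes (a larger finite set $A$ that ``caps off'' the flat directions, or an induction on dimension via sections) are plausible directions but are not carried out, and neither is shown to produce a bounded chain of elementary convex sets reaching to infinity in, say, $B_\gamma=\{x\in\RR^3: x_1^2+x_2^2\le 1,\ \abs{x_3}\le 1\}$.

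Two smaller points. First, your claim that $\bar x_n$ being an exposed point ``forces $\partial\gamma^\circ(p_n)=\{\bar x_n\}$ for \emph{any} $p_n\in\partial\gamma(\bar x_n)$'' is false as stated: exposedness only guarantees the existence of \emph{some} functional whose exposed face is the singleton $\{\bar x_n\}$ (a vertex of a square is exposed, yet most of its supporting functionals expose a whole edge). The two-dimensional proof in the paper is more careful here, working with an accumulation point $u$ of exposed points and arguing that the cones $N(p)$ for $p\in\partial\gamma(u)$ are confined to a half-plane; you would need the analogous care in any higher-dimensional argument. Second, even granting boundedness of each $C_{\pi_n}$, you must also verify that $b+\lambda_n u$ does not simultaneously lie in some \emph{unbounded} elementary convex set in whose relative interior it sits; the paper's $\RR^2$ proof handles this by pinning down the minimal cell containing $b+\lambda_n u$ explicitly. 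In short: your write-up is a correct account of what is known and an honest diagnosis of why the general case is open, but it is not a proof of the conjecture, and indeed no proof exists in the paper to compare it against.
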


When we lose symmetry, analyzing the elementary hull is not sufficient.
Theorem~\ref{th:upperBound} and Examples~\ref{eg:quantileCL} and~\ref{eg:CL-simplex} indicate that the weights and the skewness directions play an important role; the asymmetric case is highly more complex.
Moreover, it is plausible that a similar statement to Conjecture~\ref{conj:unifRobustNorm} is false for asymmetric gauges.
It might be that $\gamma$ is uniformly robust if it is polyhedral in a neighbourhood of $-\SD_\gamma$.

\subsection*{Acknowledgements}
We thank Michael Joswig as well as an unknown referee for useful comments.

\section*{Declarations}

\subsection*{Funding}
A. Com\u{a}neci was supported by the Research Training Group ``Facets of Complexity'' (GRK 2434, project-ID 385256563).

\subsection*{Competing Interests}
The authors declare that they have no conflicts of interest.

\subsection*{Compliance with Ethical Standards}
This article does not contain any studies involving human participants  or animals performed by any of the authors.

\begin{appendices}

\section{Contamination locus for tropical medians} \label{sec:CL_trop_med}

In this section, we discuss a way of computing contamination loci for tropical medians~\cite{TropMedian}, which arise when the unit ball of the gauge is a regular simplex.
We will focus only on Example~\ref{eg:CL-simplex}, although the ideas extend naturally to other weighted sets.
However, we firstly need some helpful results holding for every gauge.

\begin{lemma} \label{lem:aCL}
    For every $a\in\CL_\gamma(D,w)$ there exists $w'_a\in]0,w_D/\sigma[$ such that $a\in\FW_\gamma((D,w)+(a,w'_a))$.
\end{lemma}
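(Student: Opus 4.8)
Let $a\in\CL_\gamma(D,w)$. By Lemma~\ref{lem:one-pointCL}, there is some $e\in\RR^d$ and $w_e\in(0,w_D/\sigma)$ with $a$ a Fermat--Weber point of $(D,w)+(e,w_e)$, i.e.\ $\0\in\partial g(a)+w_e\,\partial\gamma(a-e)$, where $g(x)=\sum_{d\in D}w_d\gamma(x-d)$. So there is $q\in w_e\,\partial\gamma(a-e)\subset B_{\gamma^\circ}\cdot w_e$ with $-q\in\partial g(a)$. The idea is to replace the added point $e$ by $a$ itself while keeping the same "pull vector'' $-q$ available at $a$: we want a weight $w_a$ and a subgradient direction so that $q\in w_a\,\partial\gamma(\0)$. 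Since $\partial\gamma(\0)=B_{\gamma^\circ}$, the vector $q$ lies in $w_a\,\partial\gamma(\0)=w_a B_{\gamma^\circ}$ as soon as $\gamma^\circ(q)\le w_a$.

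First I would handle the degenerate case: if $q=\0$, then $\0\in\partial g(a)$, so $a\in\FW_\gamma(D,w)$ already, and we may pick any $w_a\in(0,w_D/\sigma)$ and note $a\in\FW_\gamma((D,w)+(a,w_a))$ trivially. Otherwise, set $w_a:=\gamma^\circ(q)$. From $q\in w_e\,\partial\gamma(a-e)$ and $\gamma^\circ(\partial\gamma(\cdot))=\{1\}$ (subgradients of a gauge lie on $\partial B_{\gamma^\circ}$) we get $\gamma^\circ(q)=w_e\le w_D/\sigma$ — actually strictly $<w_D/\sigma$ since $w_e$ was taken in the open interval — so $w_a=w_e\in(0,w_D/\sigma)$ as required. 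Now $q/w_a$ has $\gamma^\circ(q/w_a)=1\le 1$, hence $q/w_a\in B_{\gamma^\circ}=\partial\gamma(\0)$, so $q\in w_a\,\partial\gamma(\0)=w_a\,\partial\gamma(a-a)$. Therefore $\0=(-q)+q\in\partial g(a)+w_a\,\partial\gamma(a-a)=\partial\bigl(g+w_a\gamma(\cdot-a)\bigr)(a)$, which is exactly the first-order optimality condition for $a$ being a Fermat--Weber point of $(D,w)+(a,w_a)$ (the objective being convex, this is sufficient). Thus $a\in\FW_\gamma((D,w)+(a,w_a))$ with $w_a\in(0,w_D/\sigma)$, as claimed.

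There is really no serious obstacle here; the statement is essentially Lemma~\ref{lem:one-pointCL} specialized to the case $e=a$, and the only thing to check is that the constructed weight still lies in the open interval $(0,w_D/\sigma)$ and that $q$ is realized as a subgradient of $\gamma$ at $\0$. The mild subtlety is just bookkeeping around the $q=\0$ case (where no nonzero scaling is needed and any admissible weight works) versus $q\neq\0$ (where $w_a=\gamma^\circ(q)>0$ is forced). One could alternatively phrase the whole argument directly: run the proof of Lemma~\ref{lem:one-pointCL} but observe that for $a$ already in $\CL_\gamma(D,w)$ one may take $u=\0$ in its notation, collapsing $e=x-u$ to $e=x=a$.
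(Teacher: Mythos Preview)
Your proof is correct but takes a genuinely different route from the paper's. The paper argues entirely at the level of function values: given that $a$ minimizes $g+w_e\gamma(\cdot-e)$, the triangle inequality $\gamma(x-a)\ge\gamma(x-e)-\gamma(a-e)$ yields
\[
g(x)+w_e\gamma(x-a)\ \ge\ g(x)+w_e\gamma(x-e)-w_e\gamma(a-e)\ \ge\ g(a)=g(a)+w_e\gamma(a-a),
\]
so $a$ also minimizes $g+w_e\gamma(\cdot-a)$, and one simply takes $w_a=w_e$. Your argument instead works at the subdifferential level, extracting $q$ with $-q\in\partial g(a)$ and then noting $q\in w_a B_{\gamma^\circ}=w_a\partial\gamma(\0)$ for $w_a=\gamma^\circ(q)$; this essentially re-runs the mechanism of Lemma~\ref{lem:one-pointCL} with $e$ collapsed to $a$.

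One small imprecision: you assert $\gamma^\circ(\partial\gamma(\cdot))=\{1\}$, but this holds only at nonzero arguments (at $\0$ the subdifferential is all of $B_{\gamma^\circ}$). Hence in general one only has $\gamma^\circ(q)\le w_e$, not equality. This does not harm the argument, since $w_a=\gamma^\circ(q)\le w_e<w_D/\sigma$ is all you need, and $w_a>0$ follows from $q\neq\0$. The paper's approach is shorter and avoids this bookkeeping; yours has the minor byproduct of identifying the smallest admissible weight $w_a$, though that is not exploited later.
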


\begin{proof}
    Since $a\in\CL_\gamma(D,w)$, Lemma~\ref{lem:one-pointCL} implies the existence of $(e,w'_e)\in\RR^d\times]0,w_D/\sigma[$ such that $a\in\FW_\gamma((D,w)+(e,w'_e))$.
    In other words, using the notation $g(x)=\sum_{d\in D}w_d\gamma(x-d)$, the point $a$ is a minimum of the function $g(\cdot)+w'_e\gamma(\cdot-e)$.
    We will prove that $a$ is also a minimum of $h(\cdot)=g(\cdot)+w'_e\gamma(\cdot-a)$.

    Using the triangle inequality and the optimality of $a$, we obtain
    \[\begin{split}
        h(x)=g(x)+w'_e\gamma(x-a) &\geq g(x)+w'_e\gamma(x-e)-w'_e\gamma(a-e) \\
        & \geq g(a)+w'_e\gamma(a-e)-w'_e\gamma(a-e)\\
        & = g(a) = h(a).
    \end{split}
    \]
    This shows that $a$ is a minimum of $h$.
    Equivalently, $a\in\FW_\gamma((D,w)+(a,w'_e))$.
\end{proof}

\begin{corollary} \label{cor:aCL}
    If $a\in\CL_\gamma(D,w)$, then $\{a\}=\FW_\gamma((D,w)+(a,w_D/\sigma))$.
\end{corollary}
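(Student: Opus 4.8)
The plan is to bootstrap from Lemma~\ref{lem:aCL}, and the whole point is to exploit the \emph{strict} inequality it provides. By that lemma there is some $w_a\in(0,w_D/\sigma)$ with $a\in\FW_\gamma\bigl((D,w)+(a,w_a)\bigr)$. Writing $g(x)=\sum_{d\in D}w_d\gamma(x-d)$, this says precisely that $a$ is a global minimizer of the convex function $g+w_a\gamma(\cdot-a)$; meanwhile the objective whose minimizers form $\FW_\gamma\bigl((D,w)+(a,w_D/\sigma)\bigr)$ is $g+\tfrac{w_D}{\sigma}\gamma(\cdot-a)$. So the task is just to compare these two objectives.

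Concretely, for an arbitrary $x\neq a$ I would split
\[
\Bigl(g(x)+\tfrac{w_D}{\sigma}\gamma(x-a)\Bigr)-\Bigl(g(a)+\tfrac{w_D}{\sigma}\gamma(\0)\Bigr)
=\Bigl(g(x)+w_a\gamma(x-a)-g(a)\Bigr)+\Bigl(\tfrac{w_D}{\sigma}-w_a\Bigr)\gamma(x-a),
\]
using $\gamma(\0)=0$. The first parenthesis on the right is $\ge 0$ because $a$ minimizes $g+w_a\gamma(\cdot-a)$, and the second summand is strictly positive since $\tfrac{w_D}{\sigma}-w_a>0$ and $\gamma(x-a)>0$ for $x\neq a$. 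Hence $a$ is the \emph{unique} global minimizer of $g+\tfrac{w_D}{\sigma}\gamma(\cdot-a)$, i.e.\ $\{a\}=\FW_\gamma\bigl((D,w)+(a,w_D/\sigma)\bigr)$, which is the claim.

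There is essentially no obstacle; the one thing to be careful about is that uniqueness genuinely needs the open-interval membership $w_a\in(0,w_D/\sigma)$ from Lemma~\ref{lem:aCL} — a merely closed bound $w_a\le w_D/\sigma$ would only reprove $a\in\FW_\gamma$, not that it is the sole point. The bookkeeping is identical whether or not $a\in D$: in either case the objective of $(D,w)+(a,t)$ is $g+t\gamma(\cdot-a)$ with $g$ absorbing the $D$-part of the weights, so no special casing is required.
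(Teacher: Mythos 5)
Your argument is correct and is essentially the paper's own proof: both decompose the objective for $(D,w)+(a,w_D/\sigma)$ as the function from Lemma~\ref{lem:aCL} (minimized at $a$) plus the strictly positive remainder $(w_D/\sigma-w_a)\gamma(\cdot-a)$ (uniquely minimized at $a$), you merely writing the resulting inequality out explicitly. Your closing remarks about needing $w_a<w_D/\sigma$ strictly and about the case $a\in D$ are accurate but add nothing beyond the paper's reasoning.
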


\begin{proof}
    According to Lemma~\ref{lem:aCL}, $a$ is a minimum of $h(\cdot):=\sum_{d\in D}w_d\gamma(\cdot-d)+w'_a\gamma({\cdot-a})$ for some $w'_a\in]0,w_D/\sigma[$.
    We know that $a$ is the unique minimum of $({w_D/\sigma-w'_a})\gamma(\cdot-a)$, so $a$ is also the unique minimum of $h(\cdot)+({w_D/\sigma-w'_a})\gamma(\cdot-a)$.
    The last statement is equivalent to $\{a\}=\FW_\gamma((D,w)+(a,w_D/\sigma))$.
\end{proof}

\begin{figure}
    \centering
    \begin{tikzpicture}[scale=1.3,x={(0.866,-0.5)},y={(0,1)},z={(-0.866,-0.5)}]



        \draw (0,1) -- (-1.5,-0.5);
        \draw (0,1) -- (1.5,1);

        \draw (0,-1) -- (0,1.5);

        \draw (-1,-1) -- (-1,1);
        \draw (-1,-1) -- (0.5,-1);

        \draw (1,0) -- (1,2);
        \draw (1,0) -- (-0.5,-1.5);

        \draw (-1,0) -- (1.5,0);

        \draw (1,1) -- (-1.5,-1.5);
        
        \node at (0,1) [circle, draw= black, fill = white, semithick, inner sep = 1.5pt, label={[black]170:$1$}] {};
        \node at (1,1) [circle, draw= black, fill = white, semithick, inner sep = 1.5pt, label={[black]10:$3$}] {};
        \node at (1,0) [circle, draw= black, fill = white, semithick, inner sep = 1.5pt, label={[black]below:$1$}] {};
        \node at (0,-1) [circle, draw= black, fill = white, semithick, inner sep = 1.5pt, label={[black]below:$3$}] {};
        \node at (-1,-1) [circle, draw= black, fill = white, semithick, inner sep = 1.5pt, label={[black]below:$1$}] {};
        \node at (-1,0) [circle, draw= black, fill = white, semithick, inner sep = 1.5pt, label={[black]170:$3$}] {};

        \node at (0.25,0.5) [circle, draw= white, fill = black, semithick, inner sep = 1.5pt] {};

        \draw[dashed] (0.25,0.5) -- (1.5,1.75);
        \draw[dashed] (0.25,0.5) -- (-1.5,0.5);
        \draw[dashed] (0.25,0.5) -- (0.25,-1.25);
        
    \end{tikzpicture}
    \caption{Subdivision of $\RR^2$ in elementary convex sets for $D$ (white points) with adjoining weights and a point (black) in the interior of $\EH_\gamma(D)$}
    \label{fig:ec-trop-median}
\end{figure}

The tropical median was defined in \cite{TropMedian} as the Fermat--Weber point under a regular simplicial gauge.
An explicit formula is given on the quotient vector space $\torus{d+1}$ by $\gamma_\triangle(x)=\sum_{i=0}^d x_i-(d+1)\min_{0\leq j\leq d} x_j$.
Indeed, one can check that the unit ball is the image of the regular $d$-simplex $\Delta_{d}=\conv\{e_0,\dots,e_d\}$ in $\torus{d+1}$, where $e_i$ represents a standard unit vector in $\RR^{d+1}$.
The unit ball for $d=2$ is displayed in Figure~\ref{fig:CL}~(a).
The skewness of $\gamma_\triangle$ is $\sigma = d$ and its dual gauge has the expression $\gamma_\triangle^\circ(x)=\gamma_\triangle(-x)/(d+1)$.

For computing tropical medians, particularly for identifying the contamination locus, we use a balancing condition given by \cite[Theorem~15]{TropMedian}.
Instead of mentioning it in full generality, we will specify how to use it in Example~\ref{eg:CL-simplex}.

To check if a point $m$ is a tropical median for a weighted set $(A,w)$ in $\RR^2\simeq\torus{3}$ we need to do the following:
\begin{enumerate}
    \item Consider the regions induced by the cones over the facets of $B_{\gamma_\triangle^\circ}=-3\Delta_2$ based at $m$.
    For example, if $m$ is the black point from Figure~\ref{fig:ec-trop-median}, there are three regions delimited by dashed lines: upper, lower-left, and lower-right.
    \item In each of the regions above, the sum of the weights of points contained in it (including the boundary) must be at least $w_A/3$.
    \item The sum of the weights of points contained in the union of \emph{two} regions (including the boundary) must be at least $2\cdot w_A/3$.
    Complementarily, the total weight in the \emph{interior} of each region as above must be at most $w_A/3$.
\end{enumerate}

Using these rules, if we choose $a$ as the black point from Figure~\ref{fig:ec-trop-median}, then $a$ is not a tropical median for $(A,w)=(D,w)+(a,6)$.
By Corollary~\ref{cor:aCL}, this yields $a\notin\CL_\gamma(D,w)$.
We note that the same argument works for each interior point of a bounded and full-dimensional elementary convex set for $D$, due to the symmetry of $D$.
Indeed, we note that the total weight of the points from the interior of lower-left region is $7$ which is greater than $w_A/3=(12+6)/3=6$.
Rule~(3) from above indicates that $a\notin\FW_\gamma((D,w)+(a,6))$.
A similar argument holds for the case when $a$ is on the open segment between the origin and a point of weight $3$.

On the other hand, if $a$ belongs to the open segment between the origin and a point of weight $1$ and set $w_a=5$, then the interior of the three regions from (1) will have weights $1$, $4$, and $4$.
All of them are smaller than $w_A/3=17/3$, so (3) applies.
But if we take the boundary into consideration, the weights are $6$, $9$, and $9$; the point $a$ belongs to all regions, so we added its weight $w_a=5$.
All of these values are greater than $17/3$, so we also satisfy (2).
All in all, $a$ will be a Fermat--Weber point for $(D,w)+(a,5)$, showing that the contamination locus from Figure~\ref{fig:CL} was correctly computed.

\section{Elementary convex sets} \label{sec:ElemConvSets}

In this section, we will prove a few basic results about elementary convex sets that we could not find in the literature.
We will use the notation from section~\ref{sec:unifRobust}.

Firstly, we will need a simple lemma about the sets $N(p)$.

\begin{lemma} \label{lemma:inters_Np_with_Nq}
    For every $p,q\in B_{\gamma^\circ}$ and every $\lambda\in]0,1[$ we have $N(p)\cap N(q)=N({\lambda p+(1-\lambda)q})$.
\end{lemma}

\begin{proof}
    We will prove the result for $p,q\in\partial B_{\gamma^\circ}$ as for other cases we trivially have $N(p)\cap N(q)=\{\0\}=N(\lambda p+(1-\lambda)q)$ as we defined $N(r)=\{\0\}$ for $\gamma(r)<1$.

    If $N(p)\cap N(q)=\{\0\}$, then $\partial\gamma^\circ(p)\cap\partial\gamma^\circ(q)=\emptyset$ since $N(p)=\RR^+\partial\gamma^\circ(p)$.
    Consequently, the open segment $]p,q[$ belongs to the interior of $B_{\gamma^\circ}$.
    Therefore, $N({\lambda p+(1-\lambda)q})=\{\0\}$.

    Now, consider the case when $N(p)\cap N(q)\neq\{\0\}$.
    Since $N(p)=\RR^+\partial\gamma^\circ(p)$ for every $p\in\partial B_{\gamma^\circ}$, then the conclusion is equivalent to $\partial\gamma^\circ(p)\cap\partial\gamma^\circ(q)=\partial\gamma^\circ(r)$ for every $r$ on the open segment $]p,q[$.
    But this is a direct consequence of \cite[Lemma~8]{Plastria-pasting-gauges1}.
\end{proof}

For an elementary convex set $S$ with respect to a set $A$ and gauge $\gamma$ we say that a generating set $\pi=(p_a)_{a\in A}$ is \emph{minimal} if there is no generating set $\pi'=(p_a')_{a\in A}$ such that $N(p_b')\subsetneq N(p_b)$ for some $b\in A$.

Consider for example the elementary convex set $C_\pi=L$ of Figure \ref{fig:TriangleEH}. Any family $\pi=(p_2,p_3,p_c,p_2,p_e)$
with $(p_c,p_e)\in]p_2,p_3[^2$ is minimal, while the generating family 
where $(p_c,p_e)=(p_3,p_2)$ is not.

\begin{remark}
    If $\pi=(p_a)_{a\in A}$ and $\pi'=(p_a')_{a\in A}$ induce the same elementary convex set, $S$, then $(\pi+\pi')/2:=\left((p_a+p_a')/2\right)_{a\in A}$ also induces~$S$.
    Indeed, Lemma~\ref{lemma:inters_Np_with_Nq} implies that $N\left((p_a+p_a')/2\right)=N(p_a)\cap N(p_a')$ so
    \[\begin{split}
    C_{(\pi+\pi')/2}&=\bigcap_{a\in A}(a+N\left((p_a+p_a')/2\right))\\
    &=\bigcap_{a\in A}(a+N(p_a)\cap N(p_a'))\\
    &=\bigcap_{a\in A}[(a+N(p_a))\cap(a+N(p_a'))]\\
    &=\bigcap_{a\in A}(a+N(p_a))\cap\bigcap_{a\in A}(a+N(p_a'))\\
    &=C_\pi\cap C_{\pi'}=S.
    \end{split}\]
\end{remark}

\begin{lemma}
    If $\pi$ is a minimal generating set for an elementary convex set $S$ and $C_{\pi''}=S$, then $N(p_a)\subset N(p_a'')$.
\end{lemma}

\begin{proof}
    If it were not the case, then there exists $b\in A$ such that $N(p_b)\not\subset N(p_b'')$.
    Accordingly, $\pi':=(\pi+\pi'')/2$ is also a generating set with $N(p_b')=N(p_b)\cap N(p_b'')\subsetneq N(p_b)$.
    The last strict inclusion contradicts the minimality of $\pi$.
\end{proof}

\begin{proposition}
    Let $S$ be an elementary convex set and $x$ a point in the relative interior of $S$. If $p_a$ lies in the relative interior of $\partial\gamma(x-a)$  for any $a\in A$, then $\pi=(p_a)_{a\in A}$ is a minimal generating set of $S$.
\end{proposition}

\begin{proof}
    By (\ref{eq:xincone}) the condition $p_a\in\partial\gamma(x-a)$ for every $a\in A$ implies that $x\in C_\pi$.
    Since relative interiors of elementary convex sets are disjoint, the last relation implies $S\subset C_\pi$.
    
    Now, consider $\rho=(q_a)_{a\in A}$ such that $S=C_{\rho}$.
    In particular, $q_a\in\partial\gamma(x-a)$.
    Since $p_a$ belongs to the relative interior of $\partial\gamma(x-a)$ and the association $p\mapsto\partial\gamma^\circ(p)$ induces an antitone isomorphism between the lattice of exposed faces of $B_\gamma^\circ$ and the lattice of exposed faces of $B_\gamma$~\cite[Prop.~4.7]{Weis:2012}, then we have $\partial\gamma^\circ(p_a)\subset\partial\gamma^\circ(q_a)$.
    This implies
    \[C_\pi=\bigcap_{a\in A}(a+N(p_a))\subset\bigcap_{a\in A}(a+N(q_a))=C_{\rho}=S.\]
    Since we have already proven that $S\subset C_\pi$, then we must have $S=C_{\pi}$.
    That is, $\pi$ is a generating set of $S$.
    What is more, $\pi$ must be minimal, since we showed that $N(p_a)\subset N(q_a)$ for an arbitraty generating family~$\rho$.
\end{proof}

\end{appendices}


\bibliography{sn-bibliography}

\end{document}